\numberwithin{equation}{section}
\theoremstyle{definition}
\newtheorem{thm}{Theorem}[section]
\newtheorem{coro}[thm]{Corollary}
\newtheorem{rem}{Remark}[section]
\newcommand\ee{\mathrm{e}}
\newcommand\dd{\mathrm{d}}
\newcommand{\gd}{GD}
\newcommand{\nagmc}{NAG-\texttt{C}}
\newcommand{\nagmsc}{NAG-\texttt{SC}}
\newcommand{\nagm}{NAG}
\title{Acceleration via Symplectic Discretization of High-Resolution Differential Equations}
\author{ Bin Shi\\
  University of California, Berkeley\\
  \texttt{binshi@berkeley.edu} \\
   \And
  Simon S. Du\\
  Institute for Advanced Study\\
   \texttt{ssdu@ias.edu} \\
   \AND
   Weijie J. Su \\
   University of Pennsylvania \\
   \texttt{suw@wharton.upenn.edu} \\
   \And
   Michael I. Jordan \\
  University of California, Berkeley \\
   \texttt{jordan@cs.berkeley.edu} \\
}
\newenvironment{itemize*}%
{\begin{itemize}[leftmargin=*,topsep=0pt]%
		\setlength{\itemsep}{0pt}%
		\setlength{\parskip}{0pt}}%
	{\end{itemize}}
\newenvironment{enumerate*}%
{\begin{enumerate}[leftmargin=*,topsep=0pt]%
		\setlength{\itemsep}{0pt}%
		\setlength{\parskip}{0pt}}%
	{\end{enumerate}}
\begin{document}

\maketitle

\begin{abstract}
We study first-order optimization algorithms obtained by discretizing ordinary differential equations (ODEs) corresponding to Nesterov's accelerated gradient methods (NAGs) and Polyak's heavy-ball method. We consider three discretization schemes: symplectic Euler \textbf{(S)}, explicit Euler \textbf{(E)} and implicit Euler \textbf{(I)} schemes. We show that the optimization algorithm generated by applying the symplectic scheme to a high-resolution ODE proposed by \citet{shi2018understanding} achieves the accelerated rate for minimizing both strongly convex functions and convex functions. On the other hand, the resulting algorithm either fails to achieve acceleration or is impractical when the scheme is implicit, the ODE is low-resolution, or the scheme is explicit. 
\end{abstract}

\section{Introduction}
\label{sec: introduction}

In this paper, we consider unconstrained minimization problems:
\begin{align}\label{eqn:opt_prob}
\min_{x \in \mathbb{R}^n} ~ f(x),
\end{align}
where $f$ is a smooth convex function. The touchstone method in this setting is gradient descent (\gd):
\begin{align}
	x_{k+1} = x_k - s \nabla f(x_k), \label{eqn:gd}
\end{align}
where $x_0$ is a given initial point and $s>0$ is the step size. Whether there exist methods that improve on \gd\ while remaining within the framework of first-order optimization is a subtle and important question.


Modern attempts to address this question date to ~\cite{polyak1964some,polyakintroduction}, who incorporated a momentum term into the gradient step, yielding a method that is referred to as the \emph{heavy-ball method}:
\begin{align}
y_{k+1} = x_k - s\nabla f(x_k), \quad 
x_{k+1} = y_{k+1} - \alpha (x_{k} -x_{k-1}),  \label{eqn:heavy_ball}
\end{align}
where $\alpha >0$ is a momentum coefficient.
While the heavy-ball method provably attains a faster rate of \emph{local} convergence than \gd~near a minimum of $f$, it generally does not provide a guarantee of acceleration \emph{globally}~\citep{polyak1964some}. 

The next major development in first-order methods is due to Nesterov, who introduced first-order gradient methods that have a faster \emph{global} convergence rate than \gd~\citep{nesterov1983method,nesterov2013introductory}.
 For a $\mu$-strongly convex objective $f$ with $L$-Lipschitz gradients, Nesterov's \emph{accelerated gradient method} (\nagmsc) involves the following pair of update equations:
 \begin{align}
y_{k+1} = x_{k} -s \nabla f(x_k), \quad 
x_{k+1} = y_{k+1} + \frac{1-\sqrt{\mu s}}{1+\sqrt{\mu s}}\left(y_{k+1}-y_k\right). \label{eqn:nagmsc}
 \end{align}
 If one sets $s = 1/L$, then \nagmsc~enjoys a $O\left((1-\sqrt{\mu/L})^k\right)$ convergence rate, improving on the $O\left(\left(1-\mu/L\right)^k\right)$ convergence rate of \gd. Nesterov also developed an accelerated algorithm (\nagmc) targeting smooth convex functions that are not strongly convex:
\begin{align}
	y_{k+1} = x_k - s\nabla f(x_k), \quad 
	x_{k+1} = y_{k+1} +\frac{k}{k+3} (y_{k+1}-y_k).  \label{eqn:nagc}
\end{align}
This algorithm has a $O(L/k^2)$ convergence rate, which is faster than \gd's $O(L/k)$ rate.

While yielding optimal and effective algorithms, the design principle of Nesterov's accelerated gradient algorithms (\nagm) is not transparent. Convergence proofs for \nagm~often use the \emph{estimate sequence} technique, which is inductive in nature and relies on series of algebraic tricks~\citep{bubeck2015convex}.
In recent years progress has been made in the understanding of acceleration by moving to a \emph{continuous-time} formulation. In particular, \cite{su2016differential} showed that as $s \rightarrow 0$, \nagmc~converges to an ordinary differential equation (ODE) (Equation~\eqref{eqn: low_NAG-C}); moreover, 
for this ODE, \cite{su2016differential} derived a (continuous-time) convergence rate using a Lyapunov function, and further transformed this Lyapunov function to a discrete version and thereby provided a new proof of the fact that \nagmc~enjoys a $O(L/k^2)$ rate.

Further progress in this vein has involved taking a variational point of view that derives ODEs from an underlying Lagrangian rather than from a limiting argument~\citep{wibisono2016variational}.
While this approach captures many of the variations of Nesterov acceleration presented in the literature, it does not distinguish between the heavy-ball dynamics and the NAG dynamics, and thus fails to distinguish between local and global acceleration.  More recently, \cite{shi2018understanding} have returned to limiting arguments with a more sophisticated methodology.  They have derived \emph{high-resolution} ODEs for the heavy-ball method (Equation~\eqref{eqn: high_hb}), \nagmsc~(Equation~\eqref{eqn: high_NAG-SC}) and \nagmc~(Equation~\eqref{eqn: high_NAG-C}). Notably, the high-resolution ODEs for the heavy-ball dynamics and the accelerated dynamics are different. \cite{shi2018understanding} also presented Lyapunov functions for these ODEs as well as the corresponding algorithms, and showed that these Lyapunov functions can be used to derive the accelerated rates of \nagmsc ~and \nagmc. A number of other papers have also contributed to the understanding of acceleration by working in a continuous-time formulation~\citep{krichene2017acceleration, krichene2015accelerated, diakonikolas2017approximate, ghadimi2016accelerated, diakonikolas2017approximate}.

This emerging literature has thus provided a new level of understanding of design principles for accelerated optimization. The design involves an interplay between continuous-time and discrete-time dynamics.  ODEs are obtained either variationally or via a limiting scheme, and various properties of the ODEs are studied, including their convergence rate, topological aspects of their flow and their behavior under perturbation. Lyapunov functions play a key role in such analyses, and also allow aspects of the continuous-time analysis to be transferred to discrete time~\citep[see, e.g.,][]{wilson2016lyapunov}.

And yet the literature has not yet provided a full exploration of the transition from continuous-time ODEs to discrete-time algorithms.  Indeed, this transition is a non-trivial one, as evidenced by the decades of research on numerical methods for the discretization of ODEs, including most notably the sophisticated arsenal of techniques referred to as ``geometric numerical integration'' that are used for ODEs obtained from underlying variational principles~\citep{hairer2006geometric}. Recent work has begun to explore these issues; examples include the use of symplectic integrators by~\cite{betancourt2018symplectic} and the use of Runge-Kutta integration by ~\cite{zhang2018direct}.  However, these methods do not always yield proofs that accelerated rates are retained in discrete time, and when they do they involve implicit discretization, which is generally not practical except in the setting of quadratic objectives.

Thus we wish to address the following fundamental question:
\begin{center}
\emph{Can we systematically and provably obtain new accelerated methods via the numerical discretization of ordinary differential equations?  
		}
\end{center}

Our approach to this question is a dynamical systems framework based on Lyapunov theory.  Our main results are as follows:
\begin{enumerate}[leftmargin=*,topsep=0pt]%
\item 
In Section~\ref{subsec: high_nag_sc}, we consider three simple numerical discretization schemes---symplectic  Euler \textbf{(S)},  explicit Euler \textbf{(E)} and implicit Euler \textbf{(I)} schemes---to discretize the high-resolution ODE of Nesterov's accelerated method for strongly convex functions.  We show that the optimization method generated by symplectic discretization achieves a $O( (1 - O(1)\sqrt{\mu/L} )^k )$ rate, thereby attaining acceleration. In sharp contrast, the implicit scheme is not practical for implementation, and the explicit scheme, while being simple, fails to achieve acceleration.

\item In Section~\ref{subsec: high_hb}, we apply these discretization schemes to the ODE for modeling the heavy-ball method, which can be viewed as a low-resolution ODE that lacks a gradient-correction term \citep{shi2018understanding}. In contrast to the previous two cases of high-resolution ODEs, the symplectic scheme does not achieve acceleration for this low-resolution ODE. More broadly, in Appendix~\ref{sec: low_res} we present more examples of low-resolution ODEs where symplectic discretization does \textit{not} lead to acceleration.

\item Next, we apply the three simple Euler schemes to the high-resolution ODE of Nesterov's accelerated method for convex functions. Again, our Lyapunov analysis sheds light on the superiority of the symplectic scheme over the other two schemes. This is the subject of Section~\ref{sec:high-resolution-ode}.


\end{enumerate}

Taken together, the three findings have the implication that \textit{high-resolution} ODEs and \textit{symplectic} schemes are critical to achieving acceleration using numerical discretization. More precisely, in addition to allowing relatively simple implementations, symplectic schemes allow for a large step size without a loss of stability, in a manner akin to (but better than) implicit schemes. In stark contrast, in the setting of low-resolution ODEs, only the implicit schemes remain stable with a large step size, due to the lack of gradient correction. Moreover, the choice of Lyapunov function is equally essential to obtaining sharp convergence rates. This important fact is highlighted in Theorem~\ref{thm: grad_descent_convex3} in the Appendix, where we analyze \gd~by considering it as a discretization method for gradient
flow (the ODE counterpart of \gd). Using the discrete version of the Lyapunov function proposed in \cite{su2016differential} instead of the classical one, we show that \gd~in fact minimizes the squared gradient norm (choosing the best iterate so far) at a rate of $O(L^2/k^2)$. Although this rate of convergence in the problem of squared gradient norm minimization is known in the literature~\citep{nesterov2012make}, the Lyapunov function argument provides a systematic approach to obtaining this rate in this problem and others. In particular, this example demonstrates the usefulness and flexibility of Lyapunov functions as a mathematical tool for optimization problems.

\section{Preliminaries}
\label{sec:pre}
In this section, we introduce necessary notation, and review ODEs derived in previous work and three classical numerical discretization schemes.


We mostly follow the notation of \cite{nesterov2013introductory}, with slight 
modifications tailored to the present paper. Let $\mathcal{F}_{L}^1( \mathbb{R}^{n})$ 
be the class of $L$-smooth convex functions defined on $\mathbb{R}^n$; that is, 
$f \in \mathcal{F}^1_L(\mathbb{R}^n)$ if $f(y) \geq f(x) + \left\langle \nabla f(x), 
y - x \right\rangle$ for all $x, y \in \mathbb R^n$ and its gradient is 
$L$-Lipschitz continuous in the sense that 
\[
\left\| \nabla f(x) - \nabla f(y) \right\| \leq L \left\| x - y \right\|,
\]
where $\|\cdot\|$ denotes the standard Euclidean norm and $L > 0$ is the 
Lipschitz constant. The function class 
$\mathcal{F}_{L}^2(\mathbb{R}^{n})$ is the subclass of 
$\mathcal{F}_{L}^1(\mathbb{R}^{n})$ such that each $f$ has a 
Lipschitz-continuous Hessian.  For $p = 1, 2$, let 
$\mathcal{S}_{\mu,L}^p(\mathbb{R}^{n})$ denote the subclass of 
$\mathcal{F}_{L}^p(\mathbb{R}^{n})$ such that each member $f$ is 
$\mu$-strongly convex for some $0 < \mu \le L$. That is, 
$f \in  \mathcal{S}_{\mu, L}^p(\mathbb{R}^{n})$ if 
$f \in  \mathcal{F}_{L}^p(\mathbb{R}^{n})$ and $f(y) \geq f(x) + \left\langle \nabla f(x), y - x \right\rangle + \frac{\mu}{2} \left\| y - x \right\|^{2}$ for all $x, y \in \mathbb{R}^{n}$. Let $x^\star$ denote a minimizer of $f(x)$.


\subsection{Approximating ODEs}
In this section we list all of the ODEs that we will discretize in this paper.
We refer readers to recent papers by \cite{su2016differential, wibisono2016variational} and \cite{shi2018understanding} for the rigorous derivations of these ODEs. We begin with the simplest. Taking the step size $s \rightarrow 0$ in Equation~\eqref{eqn:gd}, we obtain the following ODE (gradient flow):
\begin{equation}
\label{eqn: grad_flow}
\dot{X} = - \nabla f(X),
\end{equation}
with any initial $X(0) = x_0 \in \mathbb{R}^n$. 

Next, by taking $s \rightarrow 0$ in Equation~\eqref{eqn:nagc}, \cite{su2016differential} derived the low-resolution ODE of NAG-\texttt{C}:
\begin{equation}
\label{eqn: low_NAG-C}
\ddot{X} + \frac{3}{t}\dot{X} +  \nabla f(X) = 0,
\end{equation}                                                
with $X(0) = x_{0}$ and $\dot{X}(0) = 0$. For strongly convex functions, by taking $s \rightarrow 0$, one can derive the following low-resolution ODE (see, for example, \cite{wibisono2016variational})
\begin{equation}
\label{eqn: low_hb_NAG-SC}
\ddot{X} + 2 \sqrt{\mu}\dot{X} +  \nabla f(X) = 0
\end{equation}                                      
that models both the heavy-ball method and \nagmsc. This ODE has the same initial conditions as~\eqref{eqn: low_NAG-C}.


Recently, \cite{shi2018understanding} proposed high-resolution ODEs for modeling acceleration methods. The key ingredient in these ODEs is that the $O(\sqrt{s})$ terms are preserved in the ODEs. As a result, the heavy-ball method and \nagmsc~have different models as ODEs.
	\begin{enumerate}[label = \textbf{(\alph*)},leftmargin=*,topsep=0pt]
		\item If $f \in \mathcal{S}_{\mu, L}^{1}(\mathbb{R}^n)$, the high-resolution ODE of the heavy-ball method \eqref{eqn:heavy_ball} is 
		\begin{equation}
		\label{eqn: high_hb}
		\ddot{X} + 2 \sqrt{\mu}\dot{X}  + (1 + \sqrt{\mu s})\nabla f(X) = 0,
		\end{equation}                                            
		with $X(0) = x_0$ and $\dot{X}(0) = - \frac{2 \sqrt{s} \nabla f(x_0)}{1 + \sqrt{\mu s}}$. This ODE has essentially the same properties as its low-resolution counterpart \eqref{eqn: low_hb_NAG-SC} due to the absence of $\nabla^2 f(X) \dot X$.
		\item If $f \in \mathcal{S}_{\mu, L}^{2}(\mathbb{R}^n)$, the high-resolution ODE of NAG-\texttt{SC} \eqref{eqn:nagmsc} is 
		\begin{equation}
		\label{eqn: high_NAG-SC}
		\ddot{X} + 2 \sqrt{\mu}\dot{X} +  \sqrt{s}\nabla^{2}f(X) \dot{X} + (1 + \sqrt{\mu s})\nabla f(X) = 0,
		\end{equation}                                                       
		with $X(0) = x_0$ and $\dot{X}(0) = - \frac{2 \sqrt{s} \nabla f(x_0)}{1 + \sqrt{\mu s}}$.                                  
		\item If $f \in \mathcal{F}_{L}^{2}(\mathbb{R}^n)$, the high-resolution ODE of NAG-\texttt{C} \eqref{eqn:nagc} is
		\begin{equation}
		\label{eqn: high_NAG-C}
		\ddot{X} + \frac{3}{t}\dot{X} +\sqrt{s}\nabla^{2}f(X)\dot{X}  + \left( 1 + \frac{3\sqrt{s}}{2t} \right) \nabla f(X) = 0
		\end{equation}     
		for $t \geq 3\sqrt{s}/2$, with $X(3\sqrt{s}/2) = x_0$ and $\dot{X}(3\sqrt{s}/2) = - \sqrt{s} \nabla f(x_0)$.                                                                 
	\end{enumerate}         

\subsection{Discretization schemes}


To discretize ODEs~\eqref{eqn: grad_flow}-\eqref{eqn: high_NAG-C}, we replace $\dot{X} $ by $x_{k+1} - x_{k}$, $\dot{V}$ by $v_{k+1}-v_k$ and replace other terms with approximations.
Different discretization schemes correspond to different approximations.
\begin{itemize}[leftmargin=*,topsep=0pt]
	
	\item The most straightforward scheme is the explicit scheme, which uses the following approximation rule:
	\[
	x_{k + 1} - x_{k} = \sqrt{s}v_{k}, \qquad \sqrt{s} \nabla^{2} f(x_{k})v_{k} \approx \nabla f(x_{k + 1}) - \nabla f(x_{k}).
	\]
	
	\item  Another discretization scheme is the implicit scheme, which uses the following approximation rule:
	\[
	x_{k + 1} - x_{k} = \sqrt{s}v_{k + 1}, \qquad \sqrt{s} \nabla^{2} f(x_{k + 1})v_{k + 1} \approx \nabla f(x_{k + 1}) - \nabla f(x_{k }).
	\]
	Note that compared with the explicit scheme, the implicit scheme is not practical because the update of $x_{k+1}$ requires knowing $v_{k+1}$ while the update of $v_{k+1}$ requires knowing $x_{k+1}$. 
	\item The last discretization scheme considered in this paper is the symplectic scheme, which uses the following approximation rule.
	\[
	x_{k + 1} - x_{k} = \sqrt{s}v_{k}, \qquad \sqrt{s} \nabla^{2} f(x_{k + 1})v_{k} \approx \nabla f(x_{k + 1}) - \nabla f(x_{k}).
	\]
Note this scheme is practical because the update of $x_{k+1}$ only requires knowing $v_{k}$.
\end{itemize}
We remark that for low-resolution ODEs, there is no  $ \nabla^{2} f(x)$ term, whereas for high-resolution ODEs, we have this term and we use the difference of gradients to approximate this term. This additional approximation term is critical to acceleration.




\section{High-Resolution ODEs for Strongly Convex Functions}
\label{sec: high_res}

This section considers numerical discretization of the high-resolution ODEs of NAG-\texttt{SC} and the heavy-ball method using the symplectic Euler, explicit Euler and implicit Euler scheme. In particular, we compare rates of convergence towards the objective minimum of the three simple Euler schemes and the two methods (NAG-\texttt{SC} and the heavy-ball method) in Section~\ref{subsec: high_nag_sc} and Section~\ref{subsec: high_hb}, respectively. For both cases, the associated symplectic scheme is shown to exhibit surprisingly similarity to the corresponding classical method.


\subsection{NAG-\texttt{SC}}
\label{subsec: high_nag_sc}

The high-resolution ODE~\eqref{eqn: high_NAG-SC} of NAG-\texttt{SC} can be equivalently written in the phase space as
\begin{equation}
\label{eqn: low_nag-sc_phase}
           \dot{X} = V,\qquad                                             
           \dot{V} = - 2 \sqrt{\mu} V - \sqrt{s} \nabla^{2} f(X) V - (1 + \sqrt{\mu s})\nabla f(X), 
\end{equation}
with the initial conditions $X(0) = x_0$ and $V(0) = - \frac{2\sqrt{s} \nabla f(x_0)}{1 + \sqrt{\mu s}}$. For any $f \in \mathcal{S}_{\mu, L}^2(\mathbb{R}^n)$, Theorem 1 of \cite{shi2018understanding} shows that the solution $X = X(t)$ of the ODE~\eqref{eqn: high_NAG-SC} satisfies
\[
f(X) - f(x^\star) \leq \frac{  2\left\| x_0 - x^\star \right\|^2}{s} \ee^{- \frac{\sqrt{\mu} t}{4}},
\]
for any step size $0 < s \le 1/L$. In particular, setting the step size to $s = 1/L$, we get
\[
f(X) - f(x^\star) \leq 2L \left\| x_0 - x^\star \right\|^2 \ee^{- \frac{\sqrt{\mu} t}{4}}.                                       
\]
In the phase space representation, NAG-\texttt{SC} is formulated as
\begin{equation}\label{eq:nag-sc-phase}
\left\{ \begin{aligned}
& x_{k+1} - x_{k} =  \sqrt{s} v_{k} \\
& v_{k+1} - v_{k} = - \frac{2\sqrt{\mu s}}{1 - \sqrt{\mu s}} 
v_{k+1} - \sqrt{s}( \nabla f(x_{k+1}) - \nabla f(x_{k}) ) - 
 \frac{1 + \sqrt{\mu s}}{1 - \sqrt{\mu s}} \cdot \sqrt{s} \nabla f( x_{k+1} ),
         \end{aligned} \right. 
\end{equation}
with the initial condition $v_{0} = - \frac{2\sqrt{s} \nabla f(x_0)}{1 + \sqrt{\mu s}}$ for any $x_0$. This method maintains the accelerated rate of the ODE by recognizing
\[
f(x_{k}) - f(x^\star) \leq \frac{5L \left\| x_0 - x^\star \right\|^{2}}{ ( 1 + \sqrt{\mu/L}/12 )^{k} };
\]
(see Theorem 3 in \cite{shi2018understanding}) and the identification $t \approx k \sqrt{s}$.



Viewing NAG-\texttt{SC} as a numerical discretization of~\eqref{eqn: high_NAG-SC}, one might wonder if any of the three simple Euler schemes---symplectic Euler scheme, explicit Euler scheme, and implicit Euler scheme---maintain the accelerated rate in discretizing the high-resolution ODE. For clarity, the update rules of the three schemes are given as follows, each with the initial points $x_{0}$ and $v_{0} = - \frac{2\sqrt{s} \nabla f(x_0)}{1 + \sqrt{\mu s}}$. 


\textbf{Euler scheme of~\eqref{eqn: low_nag-sc_phase}: (S), (E) and (I) respectively}
\begin{align*}
& \textbf{(S)}  
&& \left\{ \begin{aligned}
          & x_{k + 1} - x_{k} = \sqrt{s} v_{k}                                                          \\
          & v_{k + 1} - v_{k} = - 2 \sqrt{\mu s} v_{k + 1}- \sqrt{s} \left( \nabla f(x_{k + 1}) - \nabla f(x_{k}) \right)- \sqrt{s}(1 + \sqrt{\mu s}) \nabla f(x_{k + 1}).
         \end{aligned} \right.  \\
& \textbf{(E)}
&&\left\{ \begin{aligned}
          & x_{k + 1} - x_{k} = \sqrt{s} v_{k}                                                          \\
          & v_{k + 1} - v_{k} = - 2 \sqrt{\mu s} v_{k}- \sqrt{s} \left( \nabla f(x_{k + 1}) - \nabla f(x_{k}) \right)- \sqrt{s}(1 + \sqrt{\mu s}) \nabla f(x_{k}).  
         \end{aligned} \right. \\
&\textbf{(I)}
&& \left\{ \begin{aligned}
          & x_{k + 1} - x_{k} = \sqrt{s} v_{k + 1}                                                          \\
          & v_{k + 1} - v_{k} = - 2 \sqrt{\mu s} v_{k + 1}- \sqrt{s} \left( \nabla f(x_{k + 1}) - \nabla f(x_{k}) \right)- \sqrt{s}(1 + \sqrt{\mu s}) \nabla f(x_{k + 1}).
         \end{aligned} \right. 
\end{align*}

Among the three Euler schemes, the symplectic scheme is the \textit{closest} to NAG-\texttt{SC}~\eqref{eq:nag-sc-phase}. More precisely, NAG-\texttt{SC} differs from the symplectic scheme only in an additional factor of $\frac1{1 - \sqrt{\mu s}}$ in the second line of \eqref{eq:nag-sc-phase}. When the step size $s$ is small, NAG-\texttt{SC} is, roughly speaking, a symplectic method if we make use of $\frac1{1 - \sqrt{\mu s}} \approx 1$. In relating to the literature, the connection between accelerated methods and the symplectic schemes has been explored in \cite{betancourt2018symplectic}, which mainly considers the leapfrog integrator, a second-order symplectic integrator. In contrast, the symplectic Euler scheme studied in this paper is a first-order symplectic integrator.


Interestingly, the close resemblance between the two algorithms is found not only in their formulations, but also in their convergence rates, which are \textit{both} accelerated as shown by Theorem~\ref{thm: high_nag-sc-three} and Theorem~\ref{thm:fixed-s-nag-three}.

Note that the discrete Lyapunov function used in the proof of the symplectic Euler scheme of~\eqref{eqn: low_nag-sc_phase} is
\begin{align}
\label{eqn: Lypunov_symplectic_sc}
\mathcal{E}(k) = &    \frac{1}{4} \left\| v_{k} \right\|^{2} + \frac{1}{4} \left\| 2\sqrt{\mu}(x_{k + 1} - x^\star) + v_{k}  + \sqrt{s} \nabla f(x_{k}) \right\|^{2} \nonumber \\
                           &   + \left( 1 + \sqrt{\mu s} \right) \left( f(x_{k}) - f(x^\star) \right) - \frac{( 1 + \sqrt{\mu s} )^{2}}{1 + 2 \sqrt{\mu s}} \cdot \frac{s}{2} \left\| \nabla f(x_{k}) \right\|^{2}.
\end{align}

The proof of Theorem~\ref{thm: high_nag-sc-three} is deferred to Appendix~\ref{subsec: high-nag-sc}. The following result is a useful consequence of this theorem.
\begin{thm}[Discretization of NAG-\texttt{SC} ODE]\label{thm:fixed-s-nag-three}
For any $f \in \mathcal{S}_{\mu, L}^{1}(\mathbb{R}^n)$, the following conclusions hold:
\begin{enumerate}
\item[(a)]
Taking step size $s = 4/(9L)$, the symplectic Euler scheme of~\eqref{eqn: low_nag-sc_phase} satisfies
\begin{align}
\label{eqn: conver_nag-sc_sym_high_fix1}
f(x_k) - f(x^\star) \leq  \frac{5L\left\| x_0 - x^\star \right\|^2}{\left( 1 + \frac{1}{9} \sqrt{ \frac{\mu}{L} } \right)^{k}}.                                       
 \end{align}                  

\item[(b)] Taking step size $s = \mu / ( 100 L^2 ) $, the explicit Euler scheme of~\eqref{eqn: low_nag-sc_phase} satisfies
  \begin{align}
\label{eqn: conver_nag-sc_ex_high_fix1}
f(x_k) - f(x^\star) \leq   3 L\left\| x_0 - x^\star \right\|^2 \left( 1 - \frac{\mu}{80L} \right)^{k}.                                       
 \end{align}   

\item[(c)] Taking step size $s = 1/L$, the implicit Euler scheme  of~\eqref{eqn: low_nag-sc_phase} satisfies
\begin{align}
\label{eqn: conver_nag-sc_im_high_fix1}
f(x_k) - f(x^\star) \leq \frac{ 13 \left\| x_0 - x^\star \right\|^2}{ 4 \left( 1 + \frac{1}{4} \sqrt{\frac{\mu}{L}} \right)^{k} }.                                       
 \end{align}     
\end{enumerate}

\end{thm}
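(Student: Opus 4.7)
The approach is to prove each part via a suitable discrete Lyapunov functional together with the contraction/decrement inequality it satisfies under the corresponding Euler scheme, and then convert that contraction into the stated convergence rate by bounding the initial value of the Lyapunov functional. For part (a) I would use exactly the Lyapunov function $\mathcal{E}(k)$ displayed in \eqref{eqn: Lypunov_symplectic_sc}; for parts (b) and (c) a close variant of the same functional (possibly with modified coefficients on the cross term $\|2\sqrt{\mu}(x_{k+1}-x^\star)+v_k+\sqrt{s}\nabla f(x_k)\|^2$ and on the corrector $-\|\nabla f(x_k)\|^2$) should work. The theorem is presented as a consequence of Theorem~\ref{thm: high_nag-sc-three}, which presumably establishes the generic contractions of the form $\mathcal{E}(k+1)\le \mathcal{E}(k)/(1+c\sqrt{\mu s})$ (or $\mathcal{E}(k+1)\le (1-c\mu s)\mathcal{E}(k)$ for scheme (E)); so the proof of this corollary reduces to plugging in the three specific step sizes, verifying admissibility, and controlling $\mathcal{E}(0)$.

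For part (a), I would proceed as follows. First compute $\mathcal{E}(k+1)-\mathcal{E}(k)$ by expanding the quadratic terms using the symplectic updates $x_{k+1}-x_k=\sqrt{s}\,v_k$ and $v_{k+1}-v_k=-2\sqrt{\mu s}\,v_{k+1}-\sqrt{s}(\nabla f(x_{k+1})-\nabla f(x_k))-\sqrt{s}(1+\sqrt{\mu s})\nabla f(x_{k+1})$. Next use strong convexity and $L$-smoothness in the forms
\[
f(x^\star)\ge f(x_{k+1})+\langle\nabla f(x_{k+1}),x^\star-x_{k+1}\rangle+\tfrac{\mu}{2}\|x^\star-x_{k+1}\|^2,\quad f(x_k)-f(x_{k+1})\le \tfrac{L}{2}\|x_{k+1}-x_k\|^2-\langle\nabla f(x_{k+1}),x_k-x_{k+1}\rangle,
\]
to convert function-value differences into inner products with $v_k$; the gradient-correction term in the update cancels against $\sqrt{s}\,\nabla f(x_k)$ inside the squared norm. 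Then the cross term $\|\nabla f(x_{k+1})-\nabla f(x_k)\|^2\le L^2 s\|v_k\|^2$ is bounded via $L$-smoothness, which is where $s\le c/L$ is used. Choosing $s=4/(9L)$ makes the resulting decrement satisfy $\mathcal{E}(k+1)-\mathcal{E}(k)\le -\tfrac{1}{9}\sqrt{\mu/L}\,\mathcal{E}(k+1)$, giving the factor $(1+\tfrac{1}{9}\sqrt{\mu/L})^{-k}$. Finally, evaluating $\mathcal{E}(0)$ with $v_0=-\tfrac{2\sqrt{s}\,\nabla f(x_0)}{1+\sqrt{\mu s}}$ causes the squared-norm bracket to reduce to $4\mu\|x_0-x^\star\|^2$ (modulo $O(\sqrt{\mu s})$ corrections), and combining with $f(x_0)-f(x^\star)\le \tfrac{L}{2}\|x_0-x^\star\|^2$ yields the prefactor $5L\|x_0-x^\star\|^2$ after accounting for the $(1+\sqrt{\mu s})$ factor absorbed by the $f$ coefficient.

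Parts (b) and (c) follow the same template with modifications dictated by the scheme. For (c), replacing $v_k$ by $v_{k+1}$ in the $x$-update means the inner products that arise from strong convexity land directly on $v_{k+1}$ and $\nabla f(x_{k+1})$, so no smoothness cost appears, and $s=1/L$ is admissible; the contraction factor $1+\tfrac{1}{4}\sqrt{\mu/L}$ then comes out of the same algebraic balancing. For (b), the explicit update pairs $v_k$ with $\nabla f(x_k)$, so the main obstacle is that the $L^2 s\|v_k\|^2$ error term now cannot be controlled by a $-\sqrt{\mu s}\|v_k\|^2$ contribution (which required $v_{k+1}$ under implicit damping); instead it must be absorbed by a $-\mu s\|v_k\|^2$ term coming from strong convexity, forcing $s$ to be as small as $\mu/(100L^2)$ so that $L^2 s\lesssim \mu/100$. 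The resulting decrement is $\mathcal{E}(k+1)\le (1-\mu s/c)\mathcal{E}(k)$ with $c\approx 80$, which rearranges to \eqref{eqn: conver_nag-sc_ex_high_fix1}.

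The main obstacle is the calibration of constants in the decrement inequalities, especially in part (b), where the delicate balance between the $L^2 s$-error and the $\mu s$-damping is what quantitatively rules out acceleration. A second subtlety is establishing the lower bound $\mathcal{E}(k)\ge (1+\sqrt{\mu s})(f(x_k)-f(x^\star))$ uniformly in $k$; because $\mathcal{E}$ has a negative $\|\nabla f(x_k)\|^2$ piece, one must verify that the coefficient $(1+\sqrt{\mu s})^2/(1+2\sqrt{\mu s})$ is chosen so that combining with $\|v_k\|^2/4$ via Cauchy--Schwarz and $v_0$'s definition keeps the functional nonnegative above the objective gap. All the remaining manipulations are routine algebraic expansions that I would not grind through at the planning stage.
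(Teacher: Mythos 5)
Your high-level structure---reduce to a general-step-size theorem (here Theorem~\ref{thm: high_nag-sc-three}) that asserts a per-step Lyapunov contraction of the form $\mathcal{E}(k+1)-\mathcal{E}(k)\le -c\sqrt{\mu s}\,\mathcal{E}(k+1)$, then specialize $s$ and bound $\mathcal{E}(0)$---is exactly the paper's route, and your choice of the Lyapunov function \eqref{eqn: Lypunov_symplectic_sc} for part~(a) is the right one. The qualitative narrative for (b) and (c) is also broadly correct.

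However, the mechanism you describe for part~(a) is wrong in a way that would break the proof. You propose to dispose of the gradient-difference term by crude Lipschitz bounding, $\|\nabla f(x_{k+1})-\nabla f(x_k)\|^2\le L^2 s\|v_k\|^2$, and then to absorb it into the velocity damping, claiming ``this is where $s\le c/L$ is used.'' It is not: the gradient-difference terms that appear in $\mathcal{E}(k+1)-\mathcal{E}(k)$ carry a prefactor of order $s$, so your bound converts them to a positive contribution of order $L^2 s^2\,\|v_k\|^2$, which the damping term $-\sqrt{\mu s}\,\|v_k\|^2$ can swallow only when $L^2 s^2\lesssim\sqrt{\mu s}$, i.e.\ $s\lesssim(\mu/L^4)^{1/3}$---far smaller than $4/(9L)$ and therefore not accelerated. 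That is precisely the dead end that symplectic discretization of the \emph{low}-resolution ODE runs into (Appendix~\ref{subsec: low_sc}, step size $\mu/(16L^2)$), and the whole point of the gradient correction in the high-resolution ODE is to avoid it. What the paper actually does is keep $\|\nabla f(x_{k+1})-\nabla f(x_k)\|^2$ as a genuine algebraic object and set it against two matching negative contributions: the co-coercivity-type descent inequality
\[
f(x_{k+1})-f(x_k)\le\langle\nabla f(x_{k+1}),x_{k+1}-x_k\rangle-\tfrac{1}{2L}\|\nabla f(x_{k+1})-\nabla f(x_k)\|^2
\]
(note this is not the inequality you wrote, which is both the wrong form and has a sign flip in the inner-product term, and in any case would give a useless lower bound on $f(x_{k+1})-f(x_k)$), together with co-coercivity $\langle\nabla f(x_{k+1})-\nabla f(x_k),x_{k+1}-x_k\rangle\ge\tfrac{1}{L}\|\nabla f(x_{k+1})-\nabla f(x_k)\|^2$ applied to the cross term coming from the $\sqrt{s}\nabla f(x_k)$ inside the quadratic bracket. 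After combining, the net coefficient on $\|\nabla f(x_{k+1})-\nabla f(x_k)\|^2$ is proportional to $\bigl(\tfrac{1}{L}-s\bigr)$, which is nonpositive whenever $s\le1/L$. That is the cancellation that buys the large step size, and it is exactly what your crude Lipschitz bound throws away. Without it your argument cannot deliver the contraction factor $(1+\tfrac{1}{9}\sqrt{\mu/L})^{-k}$ at $s=4/(9L)$.

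A secondary issue: in your discussion of (b) you write that the $-\sqrt{\mu s}\|v_k\|^2$ damping is unavailable for the explicit scheme ``which required $v_{k+1}$ under implicit damping.'' In the paper's explicit-scheme analysis the damping $-\sqrt{\mu s}\|v_k\|^2$ does appear directly; the step-size restriction to $\mu/(100L^2)$ comes instead from the positive $\tfrac{(1+\sqrt{\mu s})sL}{2}\|v_k\|^2$ produced by the standard descent lemma (and the $3\mu s\|v_k\|^2$ from expanding the quadratic bracket), which must be dominated by half the damping. The conclusion ($s=O(\mu/L^2)$) is the same, but the stated causal mechanism is not the one the paper uses. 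Parts (c) and the overall specialize-and-bound-$\mathcal{E}(0)$ logic are fine as sketched.
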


In addition, Theorem~\ref{thm:fixed-s-nag-three} shows that the implicit scheme also achieves acceleration. However, unlike NAG-\texttt{SC}, the symplectic scheme, and the explicit scheme, the implicit scheme is generally not easy to use in practice because it requires solving a nonlinear fixed-point equation when the objective is not quadratic. On the other hand, the explicit scheme can only take a smaller step size $O(\mu/L^2)$, which prevents this scheme from achieving acceleration.

\subsection{The heavy-ball method}
\label{subsec: high_hb}


We turn to the heavy-ball method ODE \eqref{eqn: high_hb}, whose phase space representation reads
\begin{equation}
\label{eqn: low_hb_phase}
           \dot{X} = V,                                             \qquad
           \dot{V} = - 2 \sqrt{\mu} V - (1 + \sqrt{\mu s})\nabla f(X),
\end{equation}
with the initial conditions $X(0) = x_0$ and $V(0) = - \frac{2\sqrt{s} \nabla f(x_0)}{1 + \sqrt{\mu s} }$. Theorem 2 in \cite{shi2018understanding} shows that the solution $X = X(t)$ to this ODE satisfies
\[
f(X(t)) - f(x^\star) \leq \frac{ 7 \left\| x_0 - x^\star \right\|^2}{2s} \ee^{- \frac{\sqrt{\mu} t}{4}},
\]
for $f \in \mathcal{S}_{\mu, L}^{1}(\mathbb{R}^n)$ and any step size $0 < s \leq 1/L$. In particular, taking $s = 1/L$ gives
\[
f(X(t)) - f(x^\star) \leq \frac{ 7L \left\| x_0 - x^\star \right\|^2}{2} \ee^{- \frac{\sqrt{\mu} t}{4}}.
\]

Returning to the discrete regime, Polyak's heavy-ball method uses the following update rule:
\[
  \left\{ \begin{aligned}
          & x_{k + 1} - x_{k} = \sqrt{s} v_{k}                                                          \\
          & v_{k + 1} - v_{k} = - \frac{2 \sqrt{\mu s}}{1 - \sqrt{\mu s}} v_{k + 1} - \frac{1 + \sqrt{\mu s}}{1 - \sqrt{\mu s}} \cdot \sqrt{s} \nabla f(x_{k + 1}),
         \end{aligned} \right. 
\]
which attains a non-accelerated rate (see Theorem 4 of \cite{shi2018understanding}):
\begin{equation}\label{eq:heavy-ball-dis-rate}
f(x_k) - f(x^\star) \le \frac{5L \left\| x_0 - x^\star \right\|^{2}}{ \left( 1 + \frac{\mu}{16L}\right)^{k}}.
\end{equation}

The three simple Euler schemes for numerically solving the ODE~\eqref{eqn: high_hb} are given as follows. Every scheme starts with any arbitrary $x_{0}$ and $v_{0} = - \frac{2\sqrt{s} \nabla f(x_0)}{1 + \sqrt{\mu s} }$. As in the case of NAG-\texttt{SC}, the symplectic scheme is the closest to the heavy-ball method.



\textbf{Euler scheme of~\eqref{eqn: low_hb_phase}: (S), (E) and (I) respectively}
\begin{align*}
&\textbf{(S)} &&\left\{ \begin{aligned}
               & x_{k + 1} - x_{k} = \sqrt{s} v_{k},                    \\
               & v_{k + 1} - v_{k} = - 2 \sqrt{\mu s} v_{k + 1} - \sqrt{s}(1 + \sqrt{\mu s}) \nabla f(x_{k + 1}).
             \end{aligned} \right. \\
&\textbf{(E)} &&\left\{ \begin{aligned}
          & x_{k + 1} - x_{k} = \sqrt{s} v_{k}                                                                                      \\
          & v_{k + 1} - v_{k} = - 2 \sqrt{\mu s} v_{k} - \sqrt{s}(1 + \sqrt{\mu s}) \nabla f(x_{k}).
         \end{aligned} \right. \\
&\textbf{(I)} &&\left\{ \begin{aligned}
          & x_{k + 1} - x_{k} = \sqrt{s} v_{k + 1}                                                          \\
          & v_{k + 1} - v_{k} = - 2 \sqrt{\mu s} v_{k + 1} - \sqrt{s}(1 + \sqrt{\mu s}) \nabla f(x_{k + 1}).  
         \end{aligned} \right. 
\end{align*}

The theorem below characterizes the convergence rates of the three schemes. This theorem is extended to general step sizes by Theorem~\ref{thm:heavy-ball-rate-general-s} in Appendix~\ref{subsec: proof_polyak}.

\begin{thm}[Discretization of heavy-ball ODE]\label{thm:heavy-ball-rate-fixed-s}
For any $f \in \mathcal{S}_{\mu, L}^{1}(\mathbb{R}^n)$, the following conclusions hold:
\begin{enumerate}
\item[(a)] Taking step size $s = \mu / (16L^{2})$, the symplectic Euler scheme of~\eqref{eqn: low_hb_phase} satisfies
 \begin{align}
\label{eqn: conver_high_hb_sym_fix}
f(x_k) - f(x^\star) \leq \frac{3L \left\| x_0 - x^\star \right\|^{2} }{\left(1 + \frac{\mu}{16L} \right)^{k}}.                                       
 \end{align}

\item[(b)] Taking step size $s = \mu/(36L^{2})$, the explicit Euler scheme of~\eqref{eqn: low_hb_phase} satisfies
 \begin{align}
\label{eqn: conver_high_hb_ex_fix}
f(x_k) - f(x^\star) \leq 3L \left\| x_0 - x^\star \right\|^{2} \left(1 - \frac{\mu}{48L} \right)^{k}.                                  
 \end{align}    

\item[(c)] Taking step size $s = 1/L$, the implicit Euler scheme of~\eqref{eqn: low_hb_phase} satisfies
\begin{align}
\label{eqn: conver_high_hb_im_fix}
f(x_k) - f(x^\star) \leq \frac{ 15L \left\| x_0 - x^\star \right\|^{2} }{4\left(1 + \frac{1}{4} \sqrt{\frac{\mu}{L}} \right)^{k} }.                                     
 \end{align}  

\end{enumerate}
  
\end{thm}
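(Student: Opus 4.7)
The plan is to mirror the strategy used for Theorem~\ref{thm:fixed-s-nag-three} but with a discrete Lyapunov function tailored to the \emph{low-resolution} heavy-ball ODE~\eqref{eqn: high_hb}. Since \eqref{eqn: high_hb} lacks the $\sqrt{s}\,\nabla^2 f(X)\dot X$ correction present in \eqref{eqn: high_NAG-SC}, I would strip the gradient-correction pieces out of~\eqref{eqn: Lypunov_symplectic_sc} and work with
\[
\mathcal{E}(k)=\tfrac14\|v_k\|^2+\tfrac14\bigl\|2\sqrt{\mu}(x_{k+1}-x^\star)+v_k\bigr\|^2+(1+\sqrt{\mu s})\bigl(f(x_k)-f(x^\star)\bigr)
\]
for the symplectic scheme (a), and with the natural shifts of the subscript on $x_{\bullet}$ for the explicit (b) and implicit (c) schemes. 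This is the discrete analogue of the continuous Lyapunov function of \cite{shi2018understanding} that yields the $O(\ee^{-\sqrt{\mu}t/4})$ rate recalled immediately above the statement.

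For part (a), I would substitute $x_{k+1}-x_k=\sqrt{s}\,v_k$ and the implicit $v$-update into $\mathcal{E}(k+1)-\mathcal{E}(k)$, expand, and reduce the result using $\mu$-strong convexity at $x_{k+1}$ (to turn $f(x_k)-f(x^\star)$ into a quantity controlled by $\|x_{k+1}-x^\star\|^2$) together with $L$-smoothness (to bound $f(x_{k+1})-f(x_k)-\langle\nabla f(x_{k+1}),x_{k+1}-x_k\rangle$ by $\tfrac{Ls}{2}\|v_k\|^2$). The target inequality is $\mathcal{E}(k+1)\le\mathcal{E}(k)/(1+\mu/(16L))$. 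The reason the rate is only $O(\mu/L)$ rather than the $O(\sqrt{\mu/L})$ of Theorem~\ref{thm:fixed-s-nag-three} is that, without a $\nabla f(x_{k+1})-\nabla f(x_k)$ correction in the $v$-update, the expansion produces an uncancellable discretization error of order $sL^2\|v_k\|^2$; absorbing it into the kinetic term $\tfrac14\|v_{k+1}\|^2$ forces $\sqrt{\mu s}\lesssim \mu/L$, i.e.\ $s\lesssim \mu/L^2$. Part (b) follows a parallel computation; because both updates are now lagged, additional $\|v_{k+1}-v_k\|^2$ error terms appear, which force the slightly smaller choice $s=\mu/(36L^2)$ and give a one-step bound of the form $\mathcal{E}(k+1)\le(1-\mu/(48L))\mathcal{E}(k)$. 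Part (c), the implicit scheme, is the cleanest: everything is evaluated at $k+1$, the identity $\langle a-b,a\rangle=\tfrac12(\|a\|^2-\|b\|^2+\|a-b\|^2)$ applied to the $v$-difference produces dissipation without any step-size-limited leak, and one can take $s=1/L$ while still obtaining the accelerated contraction $(1+\tfrac14\sqrt{\mu/L})^{-1}$.

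In each part, I would convert the geometric Lyapunov decay into the stated bound on $f(x_k)-f(x^\star)$ using $(1+\sqrt{\mu s})(f(x_k)-f(x^\star))\le\mathcal{E}(k)$ together with the initialization estimate $\mathcal{E}(0)\lesssim L\|x_0-x^\star\|^2$, obtained by substituting $v_0=-2\sqrt{s}\,\nabla f(x_0)/(1+\sqrt{\mu s})$, using $\|\nabla f(x_0)\|\le L\|x_0-x^\star\|$, and applying $f(x_0)-f(x^\star)\le\tfrac{L}{2}\|x_0-x^\star\|^2$.

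The main obstacle is the algebraic bookkeeping of the quadratic $\|2\sqrt{\mu}(x_\bullet-x^\star)+v_\bullet\|^2$ coupled with the momentum denominators $1\pm\sqrt{\mu s}$ in the $v$-updates, and in particular tracking the \emph{numerical} constants tightly enough to recover $3L$, $15L/4$, $\mu/(16L)$, $\mu/(48L)$, and $\tfrac14\sqrt{\mu/L}$. A secondary subtlety is confirming that the failure of (a) and (b) to accelerate is a genuine consequence of the missing gradient-correction term rather than an artifact of the Lyapunov template; this is the conceptual payoff of the theorem and the contrast with Theorem~\ref{thm:fixed-s-nag-three}.
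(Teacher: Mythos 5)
Your plan follows essentially the same strategy as the paper's proof, which appears in Appendix~\ref{subsec: proof_polyak} as Theorem~\ref{thm:heavy-ball-rate-general-s} and its specialization: define a Lyapunov function $\mathcal{E}(k)=\tfrac14\|v_k\|^2+\tfrac14\|2\sqrt{\mu}(x_\bullet-x^\star)+v_k\|^2+(1+\sqrt{\mu s})(f(x_k)-f(x^\star))$, compute the one-step difference, use $\mu$-strong convexity to dominate the increment by a multiple of $\mathcal{E}(k+1)$ (or $\mathcal{E}(k)$ for the explicit scheme), and convert via an initialization bound.

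There is, however, a mismatch in the index of $x_\bullet$ that matters for the bookkeeping. You obtain your candidate Lyapunov function by stripping the gradient-correction pieces out of \eqref{eqn: Lypunov_symplectic_sc} and thus keep $x_{k+1}$ in the quadratic term, for all three schemes ``with natural shifts.'' The paper instead uses $x_k$ for the symplectic scheme (a) and the explicit scheme (b), and uses $x_{k+1}$ only for the implicit scheme (c), where $x_{k+1}-x_k$ is driven by $v_{k+1}$. Since $x_{k+1}=x_k+\sqrt{s}\,v_k$ for (a) and (b), your quadratic piece is $\tfrac14\|2\sqrt{\mu}(x_k-x^\star)+(1+2\sqrt{\mu s})v_k\|^2$, i.e.\ the velocity coefficient is $1+2\sqrt{\mu s}$, not $1$. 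That is not fatal, but the constants $3L$, $\mu/(16L)$, $\mu/(48L)$ in the statement depend on this choice, so you would need to rerun the algebra to confirm they come out identical. A second, smaller, slip: in part (a) you invoke $L$-smoothness to control $f(x_{k+1})-f(x_k)-\langle\nabla f(x_{k+1}),x_{k+1}-x_k\rangle$ by $\tfrac{Ls}{2}\|v_k\|^2$, but since the symplectic $v$-update evaluates the gradient at $x_{k+1}$, convexity already gives $f(x_{k+1})-f(x_k)\le\langle\nabla f(x_{k+1}),x_{k+1}-x_k\rangle$ (the $\tfrac{L}{2}\|x_{k+1}-x_k\|^2$ penalty is needed only in the explicit scheme (b), where the gradient is evaluated at $x_k$). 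For (a) the step-size constraint $s\lesssim\mu/L^2$ arises not from the function-value difference but from absorbing the residual $\|\nabla f(x_{k+1})\|^2$ and $\langle\nabla f(x_{k+1}),v_{k+1}\rangle$ terms produced by the quadratic piece of $\mathcal{E}$ into $\langle\nabla f(x_{k+1}),x_{k+1}-x^\star\rangle$. Your high-level diagnosis---that the missing gradient correction in \eqref{eqn: high_hb} is precisely what forces $s=O(\mu/L^2)$ and blocks acceleration for (a) and (b)---is correct and is the intended contrast with Theorem~\ref{thm:fixed-s-nag-three}.
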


Taken together, \eqref{eq:heavy-ball-dis-rate} and Theorem~\ref{thm:heavy-ball-rate-fixed-s} imply that neither the heavy-ball method nor the symplectic scheme attains an accelerated rate. In contrast, the implicit scheme achieves acceleration as in the NAG-\texttt{SC} case, but it is impractical except for quadratic objectives.

\section{High-Resolution ODEs for Convex Functions}
\label{sec:high-resolution-ode}


In this section, we turn to numerical discretization of the high-resolution ODE~\eqref{eqn: high_NAG-C} related to NAG-\texttt{C}. All proofs are deferred to Appendix~\ref{sec: high_nag-c}. 
This ODE in the phase space representation reads~\citep{shi2018understanding} 
as follows:
\begin{equation}
\label{eqn: low_nag-c_phase}
           \dot{X} = V,                                             \quad
           \dot{V} = - \frac{3}{t} \cdot V - \sqrt{s} \nabla^{2} f(X) V - \left( 1 + \frac{3\sqrt{s}}{2t} \right)\nabla f(X),  
\end{equation}
with $X(3\sqrt{s}/2) = x_0$ and $V(3\sqrt{s}/2) = -\sqrt{s} \nabla f(x_0)$. 
Theorem 5 of \cite{shi2018understanding} shows that
\label{thm: high_nag-c_con}
 Let $f \in \mathcal{F}_{L}^{1}(\mathbb{R}^n)$. For any step size $0 < s \leq 1/L$, the solution $X = X(t)$ of the high-resolution ODE~\eqref{eqn: high_NAG-C} satisfies 
\begin{equation}
 \label{eqn: conver_NAG-cC_high1}
\left\{ \begin{aligned}
            & f(X) - f(x^\star)  \leq \frac{\left( 4 + 3sL \right) \left\| x_0 - x^\star \right\|^{2}}{ t \left( 2t + \sqrt{s} \right) } \\
            & \inf_{t_{0} \leq u \leq t}\left\| \nabla f(X(u)) \right\|^{2} \leq  \frac{ \left( 12 + 9sL \right)\left\| x_0 - x^\star \right\|^{2} }{ 2\sqrt{s} \left( t^{3} - t_0^3 \right) }                               
 \end{aligned} \right. ,
 \end{equation}
 for any $t > t_0 = 1.5 \sqrt{s}$.                                
A caveat here is that it is unclear how to use a Lyapunov function to prove convergence of the (simple) explicit, symplectic or implicit Euler scheme by direct numerical discretization of the ODE~\eqref{eqn: low_NAG-C}. See  Appendix~\ref{subsec: standard_numerical} for more discussion on this point. Therefore, we slightly modify the ODE to the following one:
\begin{equation}
 \label{eqn: nag-c_phase}
           \dot{X} = V,                                             \quad
           \dot{V} = - \frac{3}{t} \cdot V - \sqrt{s} \nabla^{2} f(X) V - \left( 1 + \frac{3\sqrt{s}}{t} \right)\nabla f(X).  
\end{equation}
The only difference is in the third term on the right-hand side of the second equation, where we replace $\left( 1 + \frac{3\sqrt{s}}{2t} \right)\nabla f(X)$ by $\left( 1 + \frac{3\sqrt{s}}{t} \right)\nabla f(X)$. Now, we apply the three schemes on this (modified) ODE in the phase space, including the original NAG-\texttt{C}, which all start with $x_0$ and $v_0 = - \sqrt{s} \nabla f(x_0)$.

\textbf{Euler scheme of~\eqref{eqn: nag-c_phase}: (S), (E) and (I) respectively}
\begin{align*}
&\textbf{(S)} &&\left\{ \begin{aligned}
          & x_{k + 1} - x_{k} = \sqrt{s} v_{k }                                                          \\
          & v_{k + 1} - v_{k} = - \frac{3}{k + 1}v_{k + 1} - \sqrt{s} \left( \nabla f(x_{k + 1}) - \nabla f(x_{k}) \right)- \sqrt{s}\left(\frac{k + 4}{ k + 1}\right) \nabla f(x_{k + 1}).  
         \end{aligned} \right.  \\
&\textbf{(E)} &&\left\{\begin{aligned}
          & x_{k + 1} - x_{k} = \sqrt{s} v_{k}                                                          \\
          & v_{k + 1} - v_{k} = - \frac{3}{k}v_{k} - \sqrt{s} \left( \nabla f(x_{k + 1}) - \nabla f(x_{k}) \right)- \sqrt{s}\left(\frac{k + 3}{k}\right) \nabla f(x_{k}).  
         \end{aligned} \right. \\
&\textbf{(I)} &&\left\{\begin{aligned}
          & x_{k + 1} - x_{k} = \sqrt{s} v_{k + 1}                                                          \\
          & v_{k + 1} - v_{k} = - \frac{3}{k + 1}v_{k + 1} - \sqrt{s} \left( \nabla f(x_{k + 1}) - \nabla f(x_{k}) \right)- \sqrt{s}\left( \frac{k + 4}{ k + 1}\right) \nabla f(x_{k + 1}).  
         \end{aligned} 
         \right. 
%
\end{align*}

\begin{thm}\label{thm:nag-c-all-phase-space}
Let $f \in \mathcal{F}_{L}^{1} \left( \mathbb{R}^{n} \right)$. The following statements are true:
\begin{enumerate}[leftmargin=*,topsep=0pt]
\item[(a)] 
For any step size $0 < s \leq 1/(3L)$, the symplectic Euler scheme of~\eqref{eqn: nag-c_phase} (original NAG-\texttt{C}) 
satisfies
  \begin{equation}
\label{eqn: NAG-C}
          f(x_{k}) - f(x^\star) \leq \frac{119 \left\| x_{0} - x^\star \right\|^{2}}{s(k + 1)^{2}}, \quad  \min_{0 \leq i \leq k} \left\| \nabla f(x_{i}) \right\|^{2} \leq \frac{8568 \left\| x_{0} - x^\star \right\|^{2} }{s^2(k + 1)^{3}} ;
\end{equation}              
  

\item[(b)]
Taking any step size $0 < s \leq 1/L $, the implicit Euler scheme of~\eqref{eqn: nag-c_phase} 
satisfies
\begin{equation}
\label{eqn: NAG-C_im}
          f(x_{k}) - f(x^\star) \leq \frac{\left( 3sL + 2 \right) \left\| x_{0} - x^\star \right\|^{2} }{s(k + 2)(k + 3)}, \quad 
          \min_{0 \leq i \leq k} \left\| \nabla f(x_{i}) \right\|^{2} \leq \frac{ \left( 3sL + 2 \right) \left\| x_{0} - x^\star \right\|^{2} }{s^{2} (k + 1 )^3}.
\end{equation}        

\end{enumerate}
  
\end{thm}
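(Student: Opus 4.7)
The plan is to adapt the continuous-time Lyapunov argument for the high-resolution NAG-\texttt{C} ODE in \cite{shi2018understanding} to the discrete setting, exploiting the fact that in phase space the symplectic and implicit Euler schemes track the flow closely enough that an analogous energy function is (nearly) non-increasing along iterations.

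First I would introduce a discrete Lyapunov function of the form
\[
\mathcal{E}(k) = s (k+\alpha)(k+\beta)\bigl(f(x_k) - f(x^\star)\bigr) + \tfrac{1}{2}\bigl\|2(x_{k+1} - x^\star) + \sqrt{s}(k+\gamma)\bigl(v_k + \sqrt{s}\nabla f(x_k)\bigr)\bigr\|^2,
\]
with integer constants $\alpha,\beta,\gamma$ chosen in analogy with the continuous-time $\mathcal{E}(t) = t(t+\sqrt{s})(f(X) - f(x^\star)) + \tfrac12\|2(X-x^\star) + t(V+\sqrt{s}\nabla f(X))\|^2$, tuned so that the $\frac{k+4}{k+1}$ coefficient in scheme \textbf{(S)} fits cleanly; a small additive correction inside the squared norm may be needed to absorb the modification from $3\sqrt{s}/(2t)$ to $3\sqrt{s}/t$. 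The bulk of the work is computing $\mathcal{E}(k+1) - \mathcal{E}(k)$ by direct substitution of the update relations $x_{k+1} - x_k = \sqrt{s}\,v_k$ and the explicit formula for $v_{k+1} - v_k$, then invoking convexity ($f(x^\star) \geq f(x_{k+1}) + \langle \nabla f(x_{k+1}), x^\star - x_{k+1}\rangle$) and $L$-smoothness ($f(x_{k+1}) - f(x_k) \leq \langle \nabla f(x_k), \sqrt{s}\,v_k\rangle + \tfrac{Ls}{2}\|v_k\|^2$). After cancellation I expect a surviving residual of the form $-c_1 s^{3/2}(k+1)\|\nabla f(x_{k+1})\|^2 + c_2 L s^2 \|v_k\|^2$, and the step-size restriction $s \leq 1/(3L)$ will enter precisely to ensure the net contribution is non-positive.

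The first part of \eqref{eqn: NAG-C} then follows from $\mathcal{E}(k+1) \leq \mathcal{E}(k)$, the initial estimate $\mathcal{E}(0) = O(\|x_0 - x^\star\|^2)$, and the explicit $s(k+\alpha)(k+\beta)$ factor multiplying $f(x_k) - f(x^\star)$ inside $\mathcal{E}(k)$. To obtain the gradient-norm bound I would not discard the residual gradient term but rather sum it: telescoping the refined inequality $\mathcal{E}(k+1) - \mathcal{E}(k) \leq -c_1 s^{3/2}(k+1)\|\nabla f(x_{k+1})\|^2$ from $0$ to $k-1$ yields $c_1 s^{3/2}\sum_{i=1}^{k} i\,\|\nabla f(x_i)\|^2 \leq \mathcal{E}(0)$, and since $\sum_{i=1}^{k} i \geq k(k+1)/2$, replacing every $\|\nabla f(x_i)\|^2$ by its minimum delivers the stated $O(\|x_0-x^\star\|^2/(s^2 (k+1)^3))$ bound.

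The implicit-scheme statement (b) follows the same template, but because $v_{k+1}$, $\nabla f(x_{k+1})$ and $x_{k+1}$ appear together in the update, the cross terms in $\mathcal{E}(k+1) - \mathcal{E}(k)$ can be controlled purely by convexity at $x_{k+1}$ without invoking $L$-smoothness to bound $\|v_k\|^2$; this is why the threshold relaxes to $s \leq 1/L$ and the constant improves from $119$ to $3sL+2$. The main obstacle I anticipate lies in part (a): the algebraic bookkeeping needed so that the gradient-correction term $\sqrt{s}(\nabla f(x_{k+1}) - \nabla f(x_k))$ combines with the expansion of $\|2(x_{k+2}-x^\star) + \sqrt{s}(k+1+\gamma)(v_{k+1}+\sqrt{s}\nabla f(x_{k+1}))\|^2$ so that the surviving $Ls^2 \|v_k\|^2$ is exactly dominated by the gain from the implicit treatment of $v_{k+1}$. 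Pinning down $\alpha,\beta,\gamma$ (and any additive correction) is what determines whether the cross terms cancel and what fixes the precise constants $119$ and $8568$.
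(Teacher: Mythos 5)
Your overall Lyapunov framework — a discrete energy of the form $s\cdot q(k)\bigl(f(x_k)-f(x^\star)\bigr)+\tfrac12\|2(x_\bullet - x^\star)+\sqrt{s}(k+\gamma)(v_k+\sqrt{s}\nabla f(x_k))\|^2$, with the difference telescoped and the accumulated negative part giving the min-gradient rate — is the right template, and your observation that part (b) needs only convexity at $x_{k+1}$ (hence the cleaner $s\le 1/L$) matches the paper. (The paper in fact does not reprove part (a) here but cites it to Theorem 6 of Shi et al.\ 2018; what it does prove in Appendix C.1 is part (b), and in a neighboring subsection it analyzes the explicit scheme for the same modified ODE.) However, there are two genuine gaps in the plan.

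First, for part (a) you invoke the descent lemma $f(x_{k+1})-f(x_k)\le\langle\nabla f(x_k),\sqrt{s}\,v_k\rangle+\tfrac{Ls}{2}\|v_k\|^2$. After multiplying by the $s(k+\alpha)(k+\beta)$ prefactor in $\mathcal{E}$, the positive residual is $O(Ls^2 k^2)\|v_k\|^2$, not the constant-coefficient $c_2 Ls^2\|v_k\|^2$ you wrote. There is no step-size choice that makes this dominated by a $-\|\nabla f(x_{k+1})\|^2$ term: $\|v_k\|^2$ and $\|\nabla f(x_{k+1})\|^2$ are incomparable quantities, and the coefficient ratio worsens with $k$. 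This is precisely the obstruction the paper exhibits for the \emph{explicit} Euler scheme of the same modified ODE (Appendix~\ref{subsec: phase-space_numercial}), where a $\tfrac{Ls^2}{2}(k-1)(k+2)\|v_k\|^2$ term survives and cannot be killed. The correct ingredient for the symplectic scheme is co-coercivity, $f(x_{k+1})-f(x_k)\le\langle\nabla f(x_{k+1}),x_{k+1}-x_k\rangle-\tfrac{1}{2L}\|\nabla f(x_{k+1})-\nabla f(x_k)\|^2$: evaluating the gradient at $x_{k+1}$ lets the inner-product terms match the $\nabla f(x_{k+1})$ factors produced by expanding the squared norm, and the $-\tfrac{1}{2L}\|\nabla f(x_{k+1})-\nabla f(x_k)\|^2$ piece is what the gradient-correction term $\sqrt{s}(\nabla f(x_{k+1})-\nabla f(x_k))$ in the update interacts with (this is where $s\le 1/(3L)$ actually comes from). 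Using the descent lemma loses both the matching and the cancellation.

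Second, the claimed residual order is off and this breaks the gradient bound. You posit $\mathcal{E}(k+1)-\mathcal{E}(k)\le -c_1 s^{3/2}(k+1)\|\nabla f(x_{k+1})\|^2$. The correct order (which the paper obtains for the implicit case, $-\tfrac{s^2}{2}(k+3)(3k+7)\|\nabla f(x_{k+1})\|^2$) is $O(s^2 k^2)$, coming from the $\sqrt{s}\,q(k)\cdot\sqrt{s}\,(k+\gamma)$ cross terms in the expansion. Summing your $s^{3/2}(k+1)$ yields $O(s^{3/2}k^2)$ and hence a min-gradient bound $O(\|x_0-x^\star\|^2/(s^{3/2}k^2))$, which is neither the stated $O(\|x_0-x^\star\|^2/(s^2(k+1)^3))$ nor dimensionally consistent with it. Finally, a smaller remark: for the implicit scheme the paper's Lyapunov function uses $x_k$, not $x_{k+1}$, in the squared-norm term; with the implicit update $x_{k+1}=x_k+\sqrt{s}\,v_{k+1}$, placing $x_{k+1}$ inside $\mathcal{E}(k)$ pulls in $v_{k+1}$ and makes the bookkeeping considerably messier than it needs to be.
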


Note that Theorem~\ref{thm:nag-c-all-phase-space} (a) is the same as Theorem 6 of \cite{shi2018understanding}. The explicit Euler scheme does not guarantee convergence; see the analysis in Appendix~\ref{subsec: phase-space_numercial}.

\section{Discussion}
\label{sec: conclusion}

In this paper, we have analyzed the convergence rates of three numerical discretization schemes---the symplectic Euler scheme, explicit Euler scheme, and implicit Euler scheme---applied to ODEs that are used for modeling Nesterov's accelerated methods and Polyak's heavy-ball method. The symplectic scheme is shown to achieve accelerated rates for the high-resolution ODEs of NAG-\texttt{SC} and (slightly modified) NAG-\texttt{C} \citep{shi2018understanding}, whereas no acceleration rates are observed when the same scheme is used to discretize the low-resolution counterparts \citep{su2016differential}. For comparison, the explicit scheme only allows for a small step size in discretizing these ODEs in order to ensure stability, thereby failing to achieve acceleration. Although the implicit scheme is proved to yield accelerated methods no matter whether high-resolution or low-resolution ODEs are discretized, this scheme is generally not practical except for a limited number of cases (for example, quadratic objectives).  

We conclude this paper by presenting several directions for future work. This work suggests that both symplectic schemes and high-resolution ODEs are crucial for numerical discretization to achieve acceleration. It would be of interest to formalize and prove this assertion. For example, does any higher-order symplectic scheme maintain acceleration for the high-resolution ODEs of NAGs? What is the fundamental mechanism of the gradient correction in high-resolution ODE in stabilizing symplectic discretization? Moreover, since the discretizations are applied to the modified high-resolution ODE of NAG-\texttt{C}, it is tempting to perform a comparison study between the two high-resolution ODEs in terms of discretization properties. Finally, recognizing Nesterov's method
(NAG-\texttt{SC}) is very similar to, but still different from, the corresponding symplectic scheme, one can design new algorithms as interpolations of the two methods; it would be interesting to investigate the convergence properties of these new algorithms.

\bibliographystyle{plainnat}
\bibliography{sigproc}

 \newpage
\appendix
\tableofcontents

\section{Gradient Flow}
\label{sec: gradient_flow}
\subsection{Convergence rate of gradient flow}
\label{appendix: grad_flow}

The following theorem is the continuous-time version of Theorem 2.1.15 in~\cite{nesterov2013introductory}.
\begin{thm}
\label{thm: grad_flow_strongly}
Let $f \in \mathcal{S}_{\mu, L}^{1}(\mathbb{R}^n)$. The solution $X = X(t)$ to the gradient flow~\eqref{eqn: grad_flow} satisfies
\begin{equation*}
\label{eqn: gradient_flow_strongly}
\left\| X - x^{\star} \right\| \leq \ee^{- \mu t} \left\| x_0 - x^{\star} \right\|.
\end{equation*}
\end{thm}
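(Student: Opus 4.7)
The plan is to prove the bound via a Lyapunov argument on the squared distance to the minimizer. I would introduce the Lyapunov function
\[
\mathcal{E}(t) = \tfrac{1}{2}\|X(t) - x^\star\|^2,
\]
differentiate it along trajectories of the gradient flow $\dot X = -\nabla f(X)$, and show that $\dot{\mathcal{E}}(t) \leq -2\mu\, \mathcal{E}(t)$. Grönwall's inequality then yields $\mathcal{E}(t) \leq e^{-2\mu t}\,\mathcal{E}(0)$, and taking square roots gives the claimed $e^{-\mu t}$ contraction.

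Concretely, the first step is the direct computation
\[
\dot{\mathcal{E}}(t) = \langle X - x^\star, \dot X\rangle = -\langle X - x^\star, \nabla f(X)\rangle.
\]
The second step invokes $\mu$-strong convexity applied between the points $X$ and $x^\star$: since $\nabla f(x^\star) = 0$, the standard consequence
\[
\langle \nabla f(X) - \nabla f(x^\star), X - x^\star\rangle \geq \mu \|X - x^\star\|^2
\]
reduces to $\langle \nabla f(X), X - x^\star\rangle \geq \mu\|X - x^\star\|^2 = 2\mu\,\mathcal{E}(t)$. Combining the two displays gives $\dot{\mathcal{E}}(t) \leq -2\mu\,\mathcal{E}(t)$.

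The third step is Grönwall (or simply the observation that $\frac{d}{dt}\bigl[e^{2\mu t}\mathcal{E}(t)\bigr] \leq 0$), which yields $\mathcal{E}(t) \leq e^{-2\mu t} \mathcal{E}(0)$, i.e.\ $\|X(t) - x^\star\|^2 \leq e^{-2\mu t}\|x_0 - x^\star\|^2$. Taking square roots finishes the proof.

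There is no real obstacle here: the only ingredient beyond the definition of the flow is the standard strong-convexity inequality, and the proof is essentially three lines. The only subtlety worth noting is that Lipschitz smoothness is not actually used for this statement — strong convexity alone drives the contraction, with $L$ entering only if one also wanted a matching lower bound on the rate or a bound on function values. The choice of $\tfrac{1}{2}\|X - x^\star\|^2$ as the Lyapunov function is the natural continuous-time analogue of the argument in Theorem 2.1.15 of \cite{nesterov2013introductory} and requires no further cleverness.
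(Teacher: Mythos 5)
Your proof is correct and is essentially identical to the paper's: the paper uses $\mathcal{E} = \|X - x^\star\|^2$ (without the $\tfrac{1}{2}$), computes $\dot{\mathcal{E}} = -2\langle \nabla f(X), X - x^\star\rangle \leq -2\mu\mathcal{E}$ via the same strong-convexity inequality, and then applies Grönwall — so the two arguments differ only by the cosmetic normalization constant in the Lyapunov function.
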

\begin{proof}
Taking the following Lyapunov function 
\[
\mathcal{E} = \left\| X - x^{\star} \right\|^{2},
\]
we calculate its time derivative as
\begin{align*}
\frac{\dd \mathcal{E}}{\dd t}  & =         2 \left\langle \dot{X},  X - x^{\star} \right\rangle          \\
                                                                        & =       - 2 \left\langle \nabla f(X),  X - x^{\star} \right\rangle    \\
                                                                        & \leq   - 2 \mu   \left\| X - x^{\star} \right\|^{2}.
\end{align*}
Thus, we complete the proof.
\end{proof}


The theorem below is a continuous version of Theorem 2.1.14 in~\cite{nesterov2013introductory}.
\begin{thm}
\label{thm: grad_flow_convex1}
Let $f \in \mathcal{F}_{L}^{1}(\mathbb{R}^n)$. The solution $X = X(t)$ to the gradient flow~\eqref{eqn: grad_flow} satisfies
\begin{equation*}
\label{eqn: gradient_flow_convex1}
f(X) - f(x^{\star}) \leq \frac{ \left( f(x_0) - f(x^\star) \right)  \left\| x_{0} - x^{\star}   \right\|^{2} }{ t  \left( f(x_0) - f(x^\star) \right) +  \left\| x_{0} - x^{\star}\right\|^2}.
\end{equation*}
\end{thm}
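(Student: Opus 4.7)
The plan is to use the Lyapunov-style ODE comparison method the paper favors, applied directly to the suboptimality gap $\Delta(t) := f(X(t)) - f(x^\star)$. The target bound has the form $\Delta(t) \le \Delta(0) R^2 / (R^2 + t\Delta(0))$ with $R := \|x_0 - x^\star\|$, and this functional form is exactly what one gets from integrating the separable differential inequality $\dot\Delta \le -\Delta^2/R^2$. So the whole proof reduces to (i) controlling $\|X(t) - x^\star\|$ from above by $R$ and (ii) turning convexity and the chain rule into this differential inequality for $\Delta$.

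First I would establish that $\|X(t) - x^\star\|$ is non-increasing along the flow. Differentiating $\tfrac{1}{2}\|X - x^\star\|^2$ along $\dot X = -\nabla f(X)$ gives $-\langle \nabla f(X), X - x^\star\rangle$, which by convexity is bounded above by $-(f(X) - f(x^\star)) \le 0$. This yields $\|X(t) - x^\star\| \le R$ for all $t \ge 0$ (note this is the $\mu = 0$ version of the argument used in Theorem~A.1 of the appendix, so the same Lyapunov derivative computation re-appears here). Next I would compute the derivative of $\Delta$ itself: $\dot\Delta = \langle \nabla f(X), \dot X\rangle = -\|\nabla f(X)\|^2$. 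Combining convexity with Cauchy--Schwarz gives $\Delta(t) \le \langle \nabla f(X), X - x^\star\rangle \le \|\nabla f(X)\|\,\|X - x^\star\| \le R\,\|\nabla f(X)\|$, so squaring produces $\|\nabla f(X)\|^2 \ge \Delta(t)^2/R^2$, and therefore
\begin{equation*}
\dot\Delta(t) \le -\frac{\Delta(t)^2}{R^2}.
\end{equation*}

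To finish, I would integrate this inequality by rewriting it as $\frac{d}{dt}\!\left(\tfrac{1}{\Delta(t)}\right) = -\dot\Delta/\Delta^2 \ge 1/R^2$, which gives $1/\Delta(t) \ge 1/\Delta(0) + t/R^2 = (R^2 + t\Delta(0))/(R^2\Delta(0))$. Inverting yields the stated bound after substituting $\Delta(0) = f(x_0) - f(x^\star)$ and $R = \|x_0 - x^\star\|$. The only mildly delicate step is the monotonicity of $\|X(t) - x^\star\|$, which is needed to upgrade the pointwise Cauchy--Schwarz bound into a uniform one with $R$ independent of $t$; without it, we would only get a Grönwall-type inequality involving the time-varying distance, and the closed-form integration would not close. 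Lipschitz smoothness is not actually used in the bound (only in guaranteeing existence and uniqueness of the flow), which is consistent with the fact that the rate depends only on $\Delta(0)$ and $R$.
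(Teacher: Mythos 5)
Your argument is correct and follows the same route as the paper's proof: establish monotonicity of $\|X - x^\star\|$ via the Lyapunov computation, use convexity together with Cauchy--Schwarz to obtain $\dot\Delta \le -\Delta^2/\|x_0 - x^\star\|^2$, and integrate the separable inequality. The only difference is that you spell out the integration step ($\frac{d}{dt}(1/\Delta) \ge 1/R^2$), which the paper leaves implicit.
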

\begin{proof}
The time derivative of the distance function  is
\begin{align*}
\frac{\dd}{\dd t} \left\| X - x^{\star} \right\|^{2}          & =         2 \left\langle \dot{X}, X - x^{\star} \right\rangle          \\
                                                                        & =       - 2 \left\langle \nabla f(X),  X - x^{\star} \right\rangle    \\
                                                                        & \leq     0.
\end{align*}
We define a Lyapunov function as 
\[
\mathcal{E} = f(X) - f(x^\star).
\]
With the basic convex inequality for $f \in \mathcal{F}_{L}^{1}(\mathbb{R}^n)$, we have
\[
f(X) - f(x^{\star}) \leq \left\langle \nabla f(X), X - x^{\star} \right\rangle \leq \left\| \nabla f(X) \right\| \left\| x_0 - x^{\star}   \right\|.
\]
Furthermore, we obtain that the time derivative is
\begin{align*} 
\frac{\dd \mathcal{E}}{\dd t}  =          \left\langle \nabla f(X),  \dot{X} \right\rangle   =  - \left\| \nabla f(X) \right\|^{2}   \leq    - \frac{ \left(f(X) - f(x^{\star}) \right)^{2}}{ \left\| x_{0} - x^{\star}   \right\|^{2} }  = - \frac{\mathcal{E}^{2}}{ \left\| x_{0} - x^{\star}   \right\|^{2} }.                                                                     
\end{align*}
Hence,  the convergence rate is 
\[
f(X) - f(x^{\star}) \leq \frac{ \left( f(x_0) - f(x^\star) \right)  \left\| x_{0} - x^{\star}   \right\|^{2} }{ t  \left( f(x_0) - f(x^\star) \right) +  \left\| x_{0} - x^{\star}\right\|^2}.
\] 
\end{proof}

The following theorem is based on the Lyapunov function for gradient flow~\eqref{eqn: grad_flow} in~\cite{su2016differential}.
\begin{thm}
\label{thm: grad_flow_convex2}
Let $f \in \mathcal{F}_{L}^{1}(\mathbb{R}^n)$. The solution $X = X(t)$ to the gradient flow~\eqref{eqn: grad_flow} satisfies
\begin{equation*}
\label{eqn: gradient_flow_convex2}
\left\{ \begin{aligned}
            & f(X) - f(x^{\star}) \leq \frac{ \left\| x_0 - x^{\star} \right\|^{2}}{2t} \\
            & \min_{0 \leq u \leq t} \left\| \nabla f(X(u)) \right\|^{2} \leq \frac{\left\| x_0 - x^{\star} \right\|^{2}}{t^{2}}.
          \end{aligned} \right.  
\end{equation*}
\end{thm}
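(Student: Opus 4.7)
The plan is to mimic the Su--Boyd--Cand\`es Lyapunov construction, but specialized to the first-order flow $\dot X = -\nabla f(X)$. Specifically, I would define
\[
\mathcal{E}(t) = t\bigl(f(X(t)) - f(x^\star)\bigr) + \tfrac{1}{2}\|X(t) - x^\star\|^2.
\]
This single Lyapunov function should deliver both inequalities, which is why the theorem is stated as the pair it is. Note $\mathcal{E}(0) = \tfrac{1}{2}\|x_0 - x^\star\|^2$ and $\mathcal{E}(t) \ge 0$.

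Next I would differentiate along a trajectory. Using $\dot X = -\nabla f(X)$ and the chain rule,
\[
\tfrac{\dd \mathcal{E}}{\dd t} = \bigl(f(X) - f(x^\star)\bigr) - t\|\nabla f(X)\|^2 - \langle \nabla f(X), X - x^\star\rangle.
\]
The basic convex inequality for $f \in \mathcal{F}_L^1(\mathbb{R}^n)$ gives $\langle \nabla f(X), X - x^\star\rangle \ge f(X) - f(x^\star)$, so the two $(f(X)-f(x^\star))$ terms cancel and we obtain the clean bound
\[
\tfrac{\dd \mathcal{E}}{\dd t} \le -\,t\,\|\nabla f(X)\|^2 \le 0.
\]

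From here the two conclusions follow with essentially no further work. Monotonicity $\mathcal{E}(t) \le \mathcal{E}(0)$ yields $t(f(X)-f(x^\star)) \le \tfrac{1}{2}\|x_0-x^\star\|^2$, which is the first bound. For the second bound, integrate the differential inequality from $0$ to $t$ to get
\[
\int_0^t u\,\|\nabla f(X(u))\|^2\,\dd u \;\le\; \mathcal{E}(0) - \mathcal{E}(t) \;\le\; \tfrac{1}{2}\|x_0-x^\star\|^2.
\]
Lower-bounding the integrand via $\min_{0\le u\le t}\|\nabla f(X(u))\|^2 \cdot \int_0^t u\,\dd u = \tfrac{t^2}{2}\min_{0\le u\le t}\|\nabla f(X(u))\|^2$ gives the stated $O(1/t^2)$ rate on the squared gradient norm.

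There is no real obstacle here; the only thing to be careful about is that taking $\min$ outside the time integral requires the weight $u$ to be nonnegative (which it is) and that $\mathcal{E}(t)\ge 0$ (which follows from $f(X)\ge f(x^\star)$). This is exactly the discrete-time argument the authors allude to in the introduction transplanted to continuous time, and it is what sets the stage for the Appendix's Theorem~\ref{thm: grad_descent_convex3} that analyzes gradient descent via the same Lyapunov function.
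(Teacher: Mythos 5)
Your proposal is correct and uses exactly the Lyapunov function $\mathcal{E}(t) = t\bigl(f(X)-f(x^\star)\bigr) + \tfrac12\|X-x^\star\|^2$ that the paper uses, with the same time-derivative computation yielding $\tfrac{\dd\mathcal{E}}{\dd t} \le -t\|\nabla f(X)\|^2$. The paper stops there with ``Thus, we complete the proof''; you simply spell out the two easy consequences (monotonicity for the function value, integration of the differential inequality for the gradient-norm minimum), which is a faithful unpacking rather than a different argument.
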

\begin{proof}
The Lyapunov function is 
\[
\mathcal{E} = t \left( f(X) - f(x^{\star}) \right) + \frac{1}{2} \left\| X - x^{\star}   \right\|^{2}.
\]
We calculate its time derivative as  
\begin{align*}
\frac{\dd \mathcal{E}}{\dd t}       & =  f(X) - f(x^{\star}) + t  \left\langle \nabla f(X),  \dot{X} \right\rangle +  \left\langle X - x^{\star},  \dot{X} \right\rangle          \\
                                          & =  f(X) - f(x^{\star})  -  \left\langle \nabla f(X), X - x^{\star} \right\rangle   - t \left\| \nabla f(X) \right\|_{2}^{2} \\
                                          & \leq - t \left\| \nabla f(X) \right\|^{2}.
\end{align*}
Thus, we complete the proof.
\end{proof}

\begin{rem}
\label{rem: 1}
From the view of Lyapunov function, Theorem~\ref{thm: grad_flow_convex2} is essentially different from Theorem~\ref{thm: grad_flow_convex1}. When the Lyapunov function 
\[
\mathcal{E} = t \left( f(X) - f(x^{\star}) \right) + \frac{1}{2} \left\| X - x^{\star}   \right\|^{2}
\]
is used to take place of that
\[
\mathcal{E} =  f(X) - f(x^{\star}), 
\]
the same convergence rate for function value is not only obtained by the simple way of calculation, but we can also capture an advanced faster speed of the squared gradient norm. From this view, constructing Lyapunov function is a more powerful and advanced mathematical tool for optimization.
\end{rem}

\subsection{Explicit Euler scheme}
\label{sec: discrete_gd_ex}

The corresponding explicit-scheme version  of Theorem~\ref{thm: grad_flow_strongly} is just Theorem 2.1.15 in~\cite{nesterov2013introductory}. We state it below.
\begin{thm}[Theorem 2.1.15,~\cite{nesterov2013introductory}]
\label{thm: grad_descent_strongly}
Let $f \in \mathcal{S}_{\mu, L}^{1}(\mathbb{R}^n)$. Taking any step size $0 < s \leq 2/ \left( \mu + L \right)$, the iterates $\left\{ x_k \right\}_{k = 0}^{\infty}$ generated by \gd~\eqref{eqn:gd} satisfy
\begin{equation*}
\label{eqn: gradient_descent_strongly}
\left\| x_{k} - x^{\star} \right\|^{2} \leq \left(1 - \frac{2 \mu L s}{\mu + L}\right) \left\| x_{0} - x^{\star} \right\|^{2}.
\end{equation*}
In addition, if the step size is set to $s = 2/(\mu + L)$, we get
\begin{equation*}
\label{eqn: gradient_descent_strongly1}
\left\| x_{k} - x^{\star} \right\|^{2} \leq \left(\frac{L - \mu}{L + \mu}\right)^{2} \left\| x_{0} - x^{\star} \right\|^{2}.
\end{equation*}
\end{thm}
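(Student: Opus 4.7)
The plan is to use a Lyapunov argument in discrete time with the natural candidate $\mathcal{E}_k = \|x_k - x^\star\|^2$, mirroring the continuous-time proof of Theorem~\ref{thm: grad_flow_strongly}. First, I would expand one step of the gradient descent recursion~\eqref{eqn:gd} to get
\begin{equation*}
\mathcal{E}_{k+1} = \mathcal{E}_k - 2s\langle \nabla f(x_k), x_k - x^\star \rangle + s^2 \|\nabla f(x_k)\|^2,
\end{equation*}
so the entire argument reduces to producing a sharp lower bound on the cross-term $\langle \nabla f(x_k), x_k - x^\star\rangle$ that simultaneously controls $\|\nabla f(x_k)\|^2$, in order to absorb the $O(s^2)$ discretization error.

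The key input is the classical co-coercivity / strong-monotonicity inequality for $f \in \mathcal{S}_{\mu,L}^{1}(\mathbb{R}^n)$ (Theorem 2.1.12 in \cite{nesterov2013introductory}), which, specialized with the second argument equal to $x^\star$ and using $\nabla f(x^\star) = 0$, reads
\begin{equation*}
\langle \nabla f(x_k), x_k - x^\star \rangle \ge \frac{\mu L}{\mu + L}\|x_k - x^\star\|^2 + \frac{1}{\mu+L}\|\nabla f(x_k)\|^2.
\end{equation*}
Substituting into the previous expansion gives
\begin{equation*}
\mathcal{E}_{k+1} \le \left(1 - \frac{2\mu L s}{\mu+L}\right)\mathcal{E}_k + s\left(s - \frac{2}{\mu+L}\right)\|\nabla f(x_k)\|^2.
\end{equation*}
The step-size hypothesis $s \le 2/(\mu+L)$ is exactly what makes the coefficient of $\|\nabla f(x_k)\|^2$ nonpositive, so that term can be dropped, leaving the one-step contraction with factor $1 - 2\mu L s/(\mu+L)$.

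Iterating this one-step inequality across $k$ rounds produces the claimed geometric decay, and the specialization $s = 2/(\mu+L)$ is then a direct algebraic check: $1 - 4\mu L/(\mu+L)^2 = \left((L-\mu)/(L+\mu)\right)^2$. I do not expect any serious obstacle; the only nontrivial ingredient is the co-coercivity inequality itself, which is a purely pointwise consequence of $L$-smoothness and $\mu$-strong convexity and is completely independent of the iteration. What is conceptually pleasant, and worth emphasizing when writing this up, is the parallel with Theorem~\ref{thm: grad_flow_strongly}: the same Lyapunov function transfers directly from continuous to discrete time, and the step-size restriction appears exactly to cancel the discretization error $s^2\|\nabla f(x_k)\|^2$ against the gradient-squared slack supplied by co-coercivity.
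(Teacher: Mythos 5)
Your proof is correct and is exactly the argument the paper defers to in \cite{nesterov2013introductory}: expand $\mathcal{E}(k)=\|x_k-x^\star\|^2$ over one gradient step, invoke the co-coercivity/strong-monotonicity inequality to lower-bound the cross term and absorb the $s^2\|\nabla f(x_k)\|^2$ discretization error, and iterate. One small note: the displayed bounds in the theorem statement as written appear to be missing the exponent $k$ on the contraction factor (compare Nesterov's Theorem~2.1.15), which your iteration argument correctly supplies.
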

This proof is from~\cite{nesterov2013introductory}. The only conceptual difference is that we use the Lyapunov function
\[
\mathcal{E}(k) = \left\| x_{k} - x^{\star} \right\|^{2},
\]                                     
instead of the distance function $r_{k}$ in~\cite{nesterov2013introductory}.

The corresponding explicit version of Theorem~\ref{thm: grad_flow_convex1} is Theorem 2.1.14 in~\cite{nesterov2013introductory}. We also state it as follows.
\begin{thm}[Theorem 2.1.14,~\cite{nesterov2013introductory}]
\label{thm: grad_descent_convex1}
Let $f \in \mathcal{F}_{L}^{1}(\mathbb{R}^n)$. Taking any step size $0 < s < 2/L$, the iterates $\left\{ x_k \right\}_{k = 0}^{\infty}$ generated by \gd~\eqref{eqn:gd} satisfy
\begin{equation}
\label{eqn: gradient_descent_convex_1}
f(x_{k}) - f(x^{\star}) \leq \frac{2 \left( f(x_{0}) - f(x^{\star}) \right)\left\| x_{0} - x^{\star}   \right\|^{2} }{2 \left\| x_{0} - x^{\star}   \right\|^{2} + ks(2 - Ls) \left( f(x_{0}) - f(x^{\star}) \right)}.
\end{equation}
In addition, if the step size is set to $s = 1/L$, we get
\begin{equation}
\label{eqn: gradient_descent_convex_11}
f(x_{k}) - f(x^{\star}) \leq \frac{2L\left\| x_{0} - x^{\star}   \right\|^{2}}{k + 4}.
\end{equation}
\end{thm}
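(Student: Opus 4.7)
My plan is to mirror Nesterov's classical proof by setting up a one-step recursion for $\lambda_k := f(x_k) - f(x^\star)$ and then inverting it. The three ingredients I would assemble are: (i) the standard descent lemma, (ii) monotonicity of the distance-to-optimum Lyapunov function $\mathcal{E}(k) = \|x_k - x^\star\|^2$, and (iii) the gradient lower bound coming from convexity.

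\textbf{Step 1 (descent lemma).} Since $\nabla f$ is $L$-Lipschitz, $f(x_{k+1}) \leq f(x_k) + \langle \nabla f(x_k), x_{k+1}-x_k\rangle + \tfrac{L}{2}\|x_{k+1}-x_k\|^2$. Plugging in $x_{k+1}-x_k = -s\nabla f(x_k)$ yields
\begin{equation*}
\lambda_{k+1} \leq \lambda_k - s\bigl(1 - \tfrac{Ls}{2}\bigr)\|\nabla f(x_k)\|^2,
\end{equation*}
so the coefficient $\omega_0 := s(1 - Ls/2)$ is positive as long as $s < 2/L$.

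\textbf{Step 2 (monotonicity of $\|x_k - x^\star\|$).} Expanding $\|x_{k+1}-x^\star\|^2 = \|x_k - s\nabla f(x_k) - x^\star\|^2$ and using the co-coercivity inequality $\langle \nabla f(x_k), x_k - x^\star\rangle \geq \|\nabla f(x_k)\|^2/L$ (which holds for $L$-smooth convex $f$), I would obtain $\|x_{k+1}-x^\star\|^2 \leq \|x_k - x^\star\|^2 - s(2/L - s)\|\nabla f(x_k)\|^2$. For $s \leq 2/L$ this gives $\|x_k - x^\star\| \leq \|x_0 - x^\star\|$ for all $k$. This is the discrete analogue of the first computation in the proof of Theorem~\ref{thm: grad_flow_convex1}.

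\textbf{Step 3 (gradient lower bound and recursion).} Convexity gives $\lambda_k \leq \langle \nabla f(x_k), x_k - x^\star\rangle \leq \|\nabla f(x_k)\| \cdot \|x_k - x^\star\| \leq \|\nabla f(x_k)\| \cdot \|x_0 - x^\star\|$, so $\|\nabla f(x_k)\|^2 \geq \lambda_k^2/\|x_0-x^\star\|^2$. Inserting this into Step 1 and writing $\omega := \omega_0/\|x_0-x^\star\|^2 = s(2-Ls)/(2\|x_0-x^\star\|^2)$, the recursion becomes $\lambda_{k+1} \leq \lambda_k - \omega\lambda_k^2 = \lambda_k(1 - \omega\lambda_k)$. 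Dividing by $\lambda_k\lambda_{k+1}$ (both positive, assuming $\lambda_k > 0$, else we are already done) and using $1/(1-x) \geq 1+x$ for $x \in [0,1)$ gives the telescoping bound $1/\lambda_{k+1} - 1/\lambda_k \geq \omega$. Summing from $0$ to $k-1$ and solving for $\lambda_k$ produces exactly \eqref{eqn: gradient_descent_convex_1}.

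\textbf{Step 4 (the $s=1/L$ corollary).} Specializing to $s=1/L$ gives $s(2-Ls) = 1/L$, so the denominator of \eqref{eqn: gradient_descent_convex_1} becomes $2\|x_0-x^\star\|^2 + k\lambda_0/L$. Using the smoothness bound $\lambda_0 \leq \tfrac{L}{2}\|x_0-x^\star\|^2$, which implies $2\|x_0-x^\star\|^2 \geq 4\lambda_0/L$, I would conclude $\lambda_k \leq 2L\|x_0-x^\star\|^2\lambda_0/((4+k)\lambda_0) = 2L\|x_0-x^\star\|^2/(k+4)$, which is \eqref{eqn: gradient_descent_convex_11}. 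I expect the only subtle point to be verifying that $\omega\lambda_k < 1$ so the step $1/(1-\omega\lambda_k) \geq 1 + \omega\lambda_k$ is valid; this follows since the descent lemma forces $\lambda_{k+1} \geq 0$, so $1 - \omega\lambda_k \geq \lambda_{k+1}/\lambda_k \geq 0$.
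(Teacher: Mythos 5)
Your proof is correct and is exactly the classical argument from Theorem~2.1.14 of Nesterov's book, which is what the paper cites without reproducing: the paper simply refers the reader to \cite{nesterov2013introductory} and notes that one should read $r_k$ there as the Lyapunov function $\mathcal{E}(k)=\|x_k-x^\star\|^2$. Your four-step structure (descent lemma, monotonicity of $\|x_k-x^\star\|$ via co-coercivity, gradient lower bound $\|\nabla f(x_k)\|\geq\lambda_k/\|x_0-x^\star\|$, then inverting the Riccati-type recursion and using $\lambda_0\leq\tfrac{L}{2}\|x_0-x^\star\|^2$ for the $s=1/L$ corollary) is also precisely the discrete analogue of the paper's continuous-time proof of Theorem~\ref{thm: grad_flow_convex1}, so you have landed on the intended route.
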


Again, \cite{nesterov2013introductory} uses the Lyapunov function $\mathcal{E}(k)$ instead of $r_k$.

Finally, we show the corresponding discrete version of Theorem~\ref{thm: grad_flow_convex2}, highlighting the ODE-based approach and the importance of Lyapunov functions in proofs.
\begin{thm}
\label{thm: grad_descent_convex3}
Let $f \in \mathcal{F}_{L}^{1}(\mathbb{R}^n)$.
Taking any step size $0 < s \leq 1/L$, the iterates $\{x_{k}\}_{k = 0}^{\infty}$ generated by \gd~\eqref{eqn:gd} satisfy 
\begin{equation}
\label{eqn: gradient_descent_convex_2}
\left\{ \begin{aligned}
          & f(x_{k}) - f(x^{\star})                                                  \leq     \frac{ \left\| x_{0} - x^{\star} \right\|^{2} }{2ks} \\
          & \min_{0 \leq i \leq k}\left\| \nabla f(x_i) \right\|^{2}     \leq     \frac{ 2\left\| x_{0} - x^{\star} \right\|^{2} }{s^2(k + 1)(k + 2)}.
          \end{aligned} \right.
\end{equation}
In addition, if the step size is set $s = 1/L$, we have
\begin{equation}
\label{eqn: gradient_descent_convex_21}
\left\{ \begin{aligned}
          & f(x_{k}) - f(x^{\star}) \leq \frac{L\left\| x_{0} - x^{\star}   \right\|^{2}}{2k} \\
          &  \min_{0 \leq i \leq k}\left\| \nabla f(x_i) \right\|^{2}     \leq     \frac{ 2 L^2 \left\| x_{0} - x^{\star} \right\|^{2} }{(k + 1)(k + 2)}.
          \end{aligned} \right. 
\end{equation}
\end{thm}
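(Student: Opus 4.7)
The plan is to mirror the continuous-time Lyapunov argument of Theorem~\ref{thm: grad_flow_convex2} directly in discrete time, using the identification $t \leftrightarrow ks$. Specifically, I would propose the discrete Lyapunov function
\[
\mathcal{E}(k) = sk\bigl(f(x_k) - f(x^\star)\bigr) + \tfrac{1}{2}\|x_k - x^\star\|^2,
\]
which is the natural analogue of $t(f(X)-f(x^\star)) + \tfrac12\|X-x^\star\|^2$, and try to show that $\mathcal{E}(k)$ is nonincreasing with a decrease proportional to $\|\nabla f(x_k)\|^2$.

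First I would compute $\mathcal{E}(k+1)-\mathcal{E}(k)$, splitting it into two pieces. The quadratic piece is expanded using $x_{k+1}-x_k = -s\nabla f(x_k)$ to give
\[
\tfrac{1}{2}\bigl(\|x_{k+1}-x^\star\|^2 - \|x_k-x^\star\|^2\bigr) = \tfrac{s^2}{2}\|\nabla f(x_k)\|^2 - s\langle \nabla f(x_k),\, x_k - x^\star\rangle.
\]
For the function-value piece, I would rewrite $s(k+1)(f(x_{k+1})-f(x^\star)) - sk(f(x_k)-f(x^\star))$ as $s(k+1)(f(x_{k+1})-f(x_k)) + s(f(x_k)-f(x^\star))$ and apply two standard ingredients valid for $f \in \mathcal{F}_L^1(\mathbb{R}^n)$ and $s \leq 1/L$: the convex inequality $f(x_k) - f(x^\star) \leq \langle \nabla f(x_k),\, x_k - x^\star\rangle$ to cancel the cross term, and the descent lemma $f(x_{k+1}) - f(x_k) \leq -\tfrac{s}{2}\|\nabla f(x_k)\|^2$ to bound the function drop. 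Putting the pieces together I expect to arrive at the one-step inequality
\[
\mathcal{E}(k+1) - \mathcal{E}(k) \leq -\frac{s^2 k}{2}\|\nabla f(x_k)\|^2.
\]

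From here the first bound in \eqref{eqn: gradient_descent_convex_2} is immediate: since the right-hand side is nonpositive, $\mathcal{E}(k) \leq \mathcal{E}(0) = \tfrac{1}{2}\|x_0-x^\star\|^2$, and rearranging the definition of $\mathcal{E}(k)$ yields $f(x_k) - f(x^\star) \leq \|x_0-x^\star\|^2/(2sk)$. For the gradient bound, I would telescope the per-step inequality over $i = 0, 1, \ldots, k$ to get $\sum_{i=0}^k i\,\|\nabla f(x_i)\|^2 \leq \|x_0-x^\star\|^2/s^2$, and then lower bound the left-hand side by $\min_{0\leq i\leq k}\|\nabla f(x_i)\|^2 \cdot \sum_{i=0}^k i$. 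Finally, the case $s = 1/L$ in \eqref{eqn: gradient_descent_convex_21} follows by direct substitution.

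The main obstacle I anticipate is matching the precise denominator $(k+1)(k+2)$ stated in \eqref{eqn: gradient_descent_convex_2}: the telescoping just sketched produces only $k(k+1)$, which is slightly weaker. To close this gap I would exploit the monotone decrease $f(x_{k+1}) \leq f(x_k) - \tfrac{s}{2}\|\nabla f(x_k)\|^2$ as an auxiliary fact, either by incorporating a suitably shifted term $s(f(x_k) - f(x^\star))$ into the Lyapunov function to pick up an extra unit of decrease per step, or by summing the descent-lemma inequality separately at indices where the Lyapunov weight $i$ is too small and then combining the two summations. This index-bookkeeping step, rather than any deep convex-analytic identity, is where I expect the delicate work to lie.
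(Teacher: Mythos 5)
Your Lyapunov function $\mathcal{E}(k) = ks\bigl(f(x_k)-f(x^\star)\bigr) + \tfrac12\|x_k-x^\star\|^2$ is exactly the paper's, and your telescoping plan is the right skeleton. The function-value bound $f(x_k)-f(x^\star) \leq \|x_0-x^\star\|^2/(2ks)$ follows from monotonicity of $\mathcal{E}$ as you say, so that half is fine.

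The gap you flag is real, and it is not closed by either of your proposed fixes. Your one-step inequality $\mathcal{E}(k+1)-\mathcal{E}(k) \leq -\tfrac{s^2 k}{2}\|\nabla f(x_k)\|^2$ loses exactly one unit in the weight; to reach the denominator $(k+1)(k+2)$ you need $-\tfrac{s^2(k+1)}{2}\|\nabla f(x_k)\|^2$. The paper gets that extra unit by replacing your plain convexity bound $f(x_k)-f(x^\star)\leq\langle\nabla f(x_k),x_k-x^\star\rangle$ with the sharper inequality valid for $L$-smooth convex $f$ (since $\nabla f(x^\star)=0$):
\[
f(x_k)-f(x^\star) \leq \langle\nabla f(x_k),\,x_k-x^\star\rangle - \frac{1}{2L}\|\nabla f(x_k)\|^2.
\]
This injects an additional $-\tfrac{s}{2L}\|\nabla f(x_k)\|^2 \leq -\tfrac{s^2}{2}\|\nabla f(x_k)\|^2$ into the per-step decrease, bumping the coefficient from $k$ to $k+1$ exactly, after which summation gives $\sum_{i=0}^k(i+1)\|\nabla f(x_i)\|^2 \leq \|x_0-x^\star\|^2/s^2$ and hence $\min_i\|\nabla f(x_i)\|^2 \leq 2\|x_0-x^\star\|^2/\bigl(s^2(k+1)(k+2)\bigr)$ with the stated constant. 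By contrast, your first fix (shifting the Lyapunov weight to $s(k+1)$) inflates $\mathcal{E}(0)$ to $s(f(x_0)-f(x^\star)) + \tfrac12\|x_0-x^\star\|^2 \leq \|x_0-x^\star\|^2$, costing a factor of $2$; your second fix (adding the telescoped descent lemma $\tfrac{s}{2}\sum\|\nabla f(x_i)\|^2 \leq f(x_0)-f(x^\star) \leq \tfrac{L}{2}\|x_0-x^\star\|^2$) likewise yields a bound $4\|x_0-x^\star\|^2/\bigl(s^2(k+1)(k+2)\bigr)$, off by the same factor. The missing ingredient is not index bookkeeping but the sharper convexity lemma at $x^\star$.
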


To obtain this result, we use a Lyapunov function that is different from the standard analysis of gradient descent, which uses the Lyapunov function $\mathcal{E}(k) \triangleq f(x_k) - f(x^\star)$. This Lyapunov function yields the $O(L/k)$ convergence rate for the function value. For the squared gradient norm, however, this Lyapunov function can only exploit the $L$-smoothness property that transforms the function value to the gradient norm, giving the sub-optimal $O(L^2/k)$ rate, due to the absence of gradient information in this function. Our proof uses a different Lyapunov function: $\mathcal{E}(k) = ks \left( f(x_{k}) - f(x^{\star}) \right) + \frac{1}{2} \left\| x_{k} - x^{\star}   \right\|^{2}.$


 \begin{proof}
 The corresponding discrete Lyapunov function is constructed as below
 \[
 \mathcal{E}(k) = ks \left( f(x_{k}) - f(x^{\star}) \right) + \frac{1}{2} \left\| x_{k} - x^{\star}   \right\|^{2},
 \]
from which we get
 \begin{align*}
         &    \mathcal{E}(k + 1) - \mathcal{E}(k)  \\
 =       &   s \left( f(x_{k }) - f(x^{\star}) \right) + (k + 1) s  \left( f(x_{k + 1}) - f(x_{k}) \right) +  \frac{1}{2}\left\langle x_{k + 1} - x_{k}, x_{k + 1} + x_{k} - 2x^{\star} \right\rangle          \\
 \leq    &  s \left( f(x_{k }) - f(x^{\star})  -  \left\langle \nabla f(x_{k}), x_{k} - x^{\star} \right\rangle \right)  + (k + 1)s\left\langle \nabla f(x_{k}), x_{k+1} - x_{k} \right\rangle \\
          &  + \left[ \frac{(k + 1) s L}{2} + \frac{1}{2}\right]\left\| x_{k + 1} - x_{k}\right\|^{2} \\
 \leq    &  s^{2} \left[ - \frac{1}{2Ls} - (k + 1) +  \frac{(k + 1) s L}{2} + \frac{1}{2}  \right] \left\| \nabla f(x_{k}) \right\|^{2} \\
 \leq    &  - \frac{s^{2}}{2}  \left( k + 1 \right)  \left\| \nabla f(x_{k}) \right\|^{2} 
 \end{align*}
 Taking $k_{0}$ in the assumption completes the proof.
 \end{proof}

\begin{rem}
Same as the continuous ODE in Remark~\ref{rem: 1}, from view of the discrete algorithm, we can find the apunov function is a more powerful and advanced mathematical tool.
\end{rem}


\subsection{Implicit Euler scheme}
\label{sec: discrete_gd_im}

Next, we consider the implicit Euler scheme of the gradient flow~\eqref{eqn: grad_flow} as
\begin{equation}
    \label{eqn: im_grad}
    x_{k+1} = x_{k} - s\nabla f(x_{k+1}), 
\end{equation}
with any initial $x_0 \in \mathbb{R}^n$. The corresponding implicit version of Theorem~\ref{thm: grad_flow_strongly} is shown as below.  
\begin{thm}
\label{thm: grad_descent_implicit_strongly}
 Let $f \in \mathcal{S}_{\mu, L}^{1}(\mathbb{R}^n)$,  the iterates $\{x_{k}\}_{k=0}^{\infty}$ generated by implicit gradient descent~\eqref{eqn: im_grad} satisfy
 \begin{equation}
\label{eqn: gradient_descent_implicit_strongly}
\left\| x_{k} - x^{\star} \right\| \leq \frac{1}{ \left( 1 + \mu s \right)^{k} } \cdot \left\| x_{0} - x^{\star} \right\|.
\end{equation}
In addition, if the step size $s = \theta/\mu$, where $\theta > 0$, we have
\begin{equation}
\label{eqn: gradient_descent_implicit_strongly1}
\left\| x_{k} - x^{\star} \right\| \leq \frac{1}{ \left( 1 + \theta \right)^{k} } \left\| x_{0} - x^{\star} \right\|.
\end{equation}
\end{thm}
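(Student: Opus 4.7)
The plan is to mirror the Lyapunov-function proof of Theorem~\ref{thm: grad_flow_strongly} (the continuous-time version) in discrete time, using the squared distance $\mathcal{E}(k) = \|x_k - x^\star\|^2$ as the Lyapunov function. In continuous time, the derivative $\tfrac{\dd}{\dd t}\|X - x^\star\|^2 = -2\langle \nabla f(X), X - x^\star\rangle \le -2\mu \|X - x^\star\|^2$ produces the exponential decay. The advantage of the implicit scheme is that the new gradient is evaluated at $x_{k+1}$, so the strong-convexity inequality can be applied to $x_{k+1}$ rather than $x_k$, and the discrete cross term automatically has the favorable sign (unlike the explicit case, which forces a step-size restriction).

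The first step is to rewrite the update as $x_k - x_{k+1} = s\nabla f(x_{k+1})$ and expand
\begin{align*}
\|x_k - x^\star\|^2 - \|x_{k+1} - x^\star\|^2
&= 2\langle x_k - x_{k+1}, x_{k+1} - x^\star\rangle + \|x_k - x_{k+1}\|^2 \\
&= 2s\langle \nabla f(x_{k+1}), x_{k+1} - x^\star\rangle + s^2 \|\nabla f(x_{k+1})\|^2.
\end{align*}
Next, I would invoke $\mu$-strong convexity of $f$ in two complementary forms: first, since $\nabla f(x^\star) = 0$, the monotonicity inequality gives $\langle \nabla f(x_{k+1}), x_{k+1} - x^\star\rangle \ge \mu \|x_{k+1} - x^\star\|^2$; second, the standard consequence $\|\nabla f(x_{k+1})\| \ge \mu \|x_{k+1} - x^\star\|$ bounds the squared-gradient term. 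Substituting both into the identity above yields
\begin{equation*}
\|x_k - x^\star\|^2 \;\ge\; \bigl(1 + 2\mu s + \mu^2 s^2\bigr) \|x_{k+1} - x^\star\|^2 \;=\; (1 + \mu s)^2 \,\|x_{k+1} - x^\star\|^2,
\end{equation*}
which is the per-step contraction $\|x_{k+1} - x^\star\| \le (1+\mu s)^{-1}\|x_k - x^\star\|$. Iterating over $k$ gives the first claimed bound, and substituting $s = \theta/\mu$ so that $\mu s = \theta$ yields the second.

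I do not expect a real obstacle here, as there is no step-size restriction on $s$ in the implicit case and no Lipschitz constant $L$ enters. The only subtlety is the choice to use strong convexity \emph{twice} — once for the linear term and once for the squared-gradient term — so as to extract the full factor $(1+\mu s)^2$ rather than merely $1 + 2\mu s$; simply discarding the $s^2\|\nabla f(x_{k+1})\|^2$ term would give a strictly weaker rate. This double use of strong convexity is in effect what makes the clean exact analogue of the continuous-time rate $\ee^{-\mu t}$ appear in the form $(1+\mu s)^{-k}$.
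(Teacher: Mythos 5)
Your proof is correct and follows essentially the same route as the paper's: same Lyapunov function $\mathcal{E}(k) = \|x_k - x^\star\|^2$, same expansion of the per-step difference into the linear term $2s\langle\nabla f(x_{k+1}), x_{k+1}-x^\star\rangle$ plus the quadratic term $s^2\|\nabla f(x_{k+1})\|^2$, and the same double application of strong convexity to bound both terms, yielding the exact $(1+\mu s)^2$ contraction factor. The only superficial difference is that the paper reaches the decomposition via the identity $\|a\|^2-\|b\|^2=\langle a-b, a+b\rangle$ while you expand $\|(x_k-x_{k+1})+(x_{k+1}-x^\star)\|^2$ directly, which gives the same thing.
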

\begin{proof}
The Lyapunov function is  
\[
\mathcal{E}(k) = \left\| x_{k} - x^{\star} \right\|^{2}.
\]
Then, we calculate the iterate difference as 
\begin{align*}
\mathcal{E}(k + 1) - \mathcal{E}(k)      & =         \left\| x_{k + 1} - x^{\star} \right\|^{2}  - \left\| x_{k} - x^{\star} \right\|^{2}         \\
                                                             & =        \left\langle x_{k + 1} - x_{k}, x_{k + 1} + x_{k} - 2x^{\star} \right\rangle    \\
                                                             & =         - 2s \left\langle \nabla f(x_{k + 1}),  x_{k + 1} - x^{\star} \right\rangle - s^{2} \left\| \nabla f(x_{k + 1})\right\|^{2} \\
                                                             & \leq     - \left( 2 \mu s + \mu^2 s^2 \right)  \mathcal{E}(k + 1).
\end{align*}  
Hence, the proof is complete.                                                    
\end{proof}

Next, we show the implicit version of Theorem~\ref{thm: grad_flow_convex1} as follows.
\begin{thm}
\label{thm: grad_descent_implicit_convex1}
Let $f \in \mathcal{F}_{L}^{1}(\mathbb{R}^n)$. The iterates $\{x_{k}\}_{k=0}^{\infty}$ generated by implicit gradient descent~\eqref{eqn: im_grad} satisfy
\begin{equation}
\label{eqn: gradient_descent_implicit_convex1}
f(x_{k}) - f(x^{\star}) \leq \frac{(1 + Ls)^{2}  \left( f(x_{0}) - f(x^{\star}) \right)\left\| x_{0} - x^{\star}   \right\|^{2}}{(1 + Ls)^{2} \left\| x_{0} - x^{\star}   \right\|^{2} + ks \left( f(x_{0}) - f(x^{\star}) \right)}.
\end{equation}
In addition, if the step size is set to $s = \theta/L$, we have
\begin{equation}
\label{eqn: gradient_descent_implicit_convex2}
f(x_{k}) - f(x^{\star}) \leq \frac{  L\left\| x_{0} - x^{\star}   \right\|^{2} }{2  + k \cdot \frac{1}{\theta + \frac{1}{\theta} +2 }}.
\end{equation}
\end{thm}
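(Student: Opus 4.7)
The plan is to mirror the continuous-time argument of Theorem~\ref{thm: grad_flow_convex1}, using the Lyapunov function $\mathcal{E}(k) = f(x_k) - f(x^\star)$, but sharpening the estimate of $\|\nabla f(x_{k+1})\|^2$ so that the resulting recursion telescopes cleanly. Three elementary ingredients drive the argument. \emph{Distance monotonicity.} Expanding as in the proof of Theorem~\ref{thm: grad_descent_implicit_strongly}, one has $\|x_{k+1} - x^\star\|^2 - \|x_k - x^\star\|^2 = -2s\langle \nabla f(x_{k+1}), x_{k+1} - x^\star\rangle - s^2 \|\nabla f(x_{k+1})\|^2$, and convexity makes the inner-product term nonnegative, so $\|x_k - x^\star\| \leq \|x_0 - x^\star\|$ for all $k$. \emph{Descent of $\mathcal{E}$.} Convexity of $f$ at $x_{k+1}$ together with $x_{k+1} - x_k = -s\nabla f(x_{k+1})$ yields
\[
\mathcal{E}(k+1) - \mathcal{E}(k) \leq \langle \nabla f(x_{k+1}), x_{k+1} - x_k\rangle = -s\|\nabla f(x_{k+1})\|^2,
\]
in particular $\mathcal{E}$ is non-increasing. \emph{Comparison of gradients.} From $\|x_{k+1} - x_k\| = s\|\nabla f(x_{k+1})\|$ and $L$-smoothness, the triangle inequality gives $\|\nabla f(x_k)\| \leq (1+Ls)\,\|\nabla f(x_{k+1})\|$.

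Combining the three ingredients, convexity at $x_k$ and Cauchy--Schwarz give
\[
\mathcal{E}(k) \leq \langle \nabla f(x_k), x_k - x^\star\rangle \leq \|\nabla f(x_k)\|\,\|x_0 - x^\star\| \leq (1+Ls)\,\|\nabla f(x_{k+1})\|\,\|x_0 - x^\star\|,
\]
so $\|\nabla f(x_{k+1})\|^2 \geq \mathcal{E}(k)^2 / \bigl((1+Ls)^2 \|x_0 - x^\star\|^2\bigr)$. Substituting this into the descent inequality produces
\[
\mathcal{E}(k+1) - \mathcal{E}(k) \leq -\frac{s\,\mathcal{E}(k)^2}{(1+Ls)^2\|x_0 - x^\star\|^2}.
\]
Dividing by $\mathcal{E}(k)\mathcal{E}(k+1)$ and using $\mathcal{E}(k+1) \leq \mathcal{E}(k)$ yields $\frac{1}{\mathcal{E}(k+1)} - \frac{1}{\mathcal{E}(k)} \geq \frac{s}{(1+Ls)^2\|x_0 - x^\star\|^2}$, and telescoping from $0$ to $k$ gives exactly~\eqref{eqn: gradient_descent_implicit_convex1}. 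For part~(b), set $s = \theta/L$ and apply $L$-smoothness at $x^\star$ (where $\nabla f(x^\star) = 0$) to bound $f(x_0) - f(x^\star) \leq \tfrac{L}{2}\|x_0 - x^\star\|^2$; since $x \mapsto x/(1 + cx)$ is increasing in $x > 0$, substituting this bound into~\eqref{eqn: gradient_descent_implicit_convex1} and using $(1+\theta)^2/\theta = \theta + 1/\theta + 2$ reduces to~\eqref{eqn: gradient_descent_implicit_convex2}.

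The main obstacle is the step that controls $\|\nabla f(x_{k+1})\|^2$ from below by $\mathcal{E}(k)^2$ rather than $\mathcal{E}(k+1)^2$: the naive estimate via convexity at $x_{k+1}$ produces $\mathcal{E}(k+1)^2$, and the resulting recursion $\mathcal{E}(k+1) - \mathcal{E}(k) \lesssim -\mathcal{E}(k+1)^2$ does not telescope after inversion. Replacing $\nabla f(x_{k+1})$ by $\nabla f(x_k)$ via the $L$-smoothness bound $\|\nabla f(x_k)\| \leq (1+Ls)\|\nabla f(x_{k+1})\|$ is exactly what allows a clean telescoping sum, and this is also where the characteristic factor $(1+Ls)^2$ in the statement originates.
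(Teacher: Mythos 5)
Your proof is correct and reaches the same telescoping inequality $\frac{1}{\mathcal{E}(k+1)} - \frac{1}{\mathcal{E}(k)} \geq \frac{s}{(1+Ls)^2\|x_0-x^\star\|^2}$ as the paper, but you obtain the $(1+Ls)^2$ factor through a different route. The paper first derives a lower bound on the one-step ratio, namely $\mathcal{E}(k+1)/\mathcal{E}(k) \geq 1/(1+Ls)^2$, using the descent lemma from below ($f(x_{k+1}) - f(x_k) \geq \langle\nabla f(x_{k+1}), x_{k+1}-x_k\rangle - \tfrac{L}{2}\|x_{k+1}-x_k\|^2$); it then writes $\mathcal{E}(k+1)^2 = \frac{\mathcal{E}(k+1)}{\mathcal{E}(k)}\cdot\mathcal{E}(k)\mathcal{E}(k+1)$ and inserts the ratio bound, producing a recursion of the form $\mathcal{E}(k+1)-\mathcal{E}(k) \leq -c\,\mathcal{E}(k)\mathcal{E}(k+1)$ that telescopes without any further appeal to monotonicity. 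You instead push the comparison to the gradient level, using $L$-smoothness to show $\|\nabla f(x_k)\| \leq (1+Ls)\|\nabla f(x_{k+1})\|$, which lets you lower-bound $\|\nabla f(x_{k+1})\|^2$ directly by $\mathcal{E}(k)^2/[(1+Ls)^2\|x_0-x^\star\|^2]$ via convexity at $x_k$; this yields a recursion in $\mathcal{E}(k)^2$ and needs the extra (easily established) fact $\mathcal{E}(k+1)\leq\mathcal{E}(k)$ to finish the inversion. Both are elementary and of the same length; your gradient-level comparison is arguably a more transparent explanation of where $(1+Ls)^2$ comes from, while the paper's ratio argument avoids using monotonicity of $\mathcal{E}$ at the inversion step. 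Your handling of part (b) — substituting $\mathcal{E}(0)\leq \tfrac{L}{2}\|x_0-x^\star\|^2$ after noting that the right-hand side of \eqref{eqn: gradient_descent_implicit_convex1} is increasing in $\mathcal{E}(0)$, together with the identity $(1+\theta)^2/\theta = \theta + 1/\theta + 2$ — is also correct and fills in a step the paper leaves implicit.
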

\begin{proof}
Note that the distance function $\left\| x_{k} - x^{\star} \right\|^{2}$ decreases with the iteration number $k$ as 
\begin{align*}
 \left\| x_{k + 1}- x^{\star} \right\|^{2}  -  \left\| x_{k}- x^{\star} \right\|^{2}      & =   - 2s \left\langle \nabla f(x_{k + 1}),  x_{k + 1} - x^{\star} \right\rangle - s^{2} \left\| \nabla f(x_{k + 1})\right\|^{2}       \\
                                                                                                                    & \leq   - s \left( \frac{2}{L} + s \right)   \left\| \nabla f(x_{k + 1})\right\|^{2}  \\
                                                                        & \leq     0.
\end{align*}
With the basic convex inequality for $f \in \mathcal{F}_{L}^{1}(\mathbb{R}^n)$, we have
\[
f(x_{k + 1}) - f(x^{\star})  \leq \left\langle \nabla f(x_{k + 1}), x_{k + 1} - x^{\star} \right\rangle \leq \left\| \nabla f(x_{k + 1}) \right\| \cdot \left\| x_{0} - x^{\star}   \right\|. 
\]
Now, the Lyapunov function is defined as 
\[
\mathcal{E}(k) = f(x_{k}) - f(x^\star).
\]
Then we calculate the difference at the $k$th-iteration as
\begin{align*}
 \mathcal{E}(k + 1) - \mathcal{E}(k)                & =        \left( f(x_{k + 1}) - f(x^{\star}) \right) - \left( f(x_{k}) - f(x^{\star}) \right) \\      
                                                                        & \geq  \left\langle \nabla f(x_{k + 1}),  x_{k + 1} - x_{k} \right\rangle - \frac{L}{2} \left\| x_{k + 1} - x_{k} \right\|^{2} \\
                                                                        & \geq  - s    \left( 1 + \frac{Ls}{2} \right)  \left\| \nabla f(x_{k + 1}) \right\|^{2}   \\
                                                                        & \geq  -2Ls \left( 1 + \frac{Ls}{2} \right) \mathcal{E}(k + 1)   
\end{align*}
and
\begin{align*} 
 \mathcal{E}(k + 1) - \mathcal{E}(k)                & =        \left( f(x_{k + 1}) - f(x^{\star}) \right) - \left( f(x_{k}) - f(x^{\star}) \right) \\      
                                                                        & \leq    \left\langle \nabla f(x_{k + 1}),  x_{k + 1} - x_{k} \right\rangle             \\
                                                                        & =        - s \cdot   \left\| \nabla f(x_{k + 1}) \right\|^{2}                                    \\
                                                                        & \leq    - s \cdot   \frac{ \mathcal{E}(k + 1)^{2} }{\left\| x_{0} - x^{\star}   \right\|_{2}^{2}}   \\
                                                                        & \leq    - s \cdot  \frac{\mathcal{E}(k + 1)}{\mathcal{E}(k)} \cdot \frac{ \mathcal{E}(k) \mathcal{E}(k + 1) }{\left\| x_{0} - x^{\star}   \right\|^{2}}  \\
                                                                        & \leq    - \frac{s}{ (1 + Ls)^{2}}    \cdot \frac{ \mathcal{E}(k) \mathcal{E}(k + 1) }{\left\| x_{0} - x^{\star}   \right\|^{2}}.                                                            
\end{align*}
Hence,  the convergence rate is given as
\[
f(x_{k}) - f(x^{\star}) \leq \frac{(1 + Ls)^{2}  \left( f(x_{0}) - f(x^{\star}) \right)\left\| x_{0} - x^{\star}   \right\|^{2}}{(1 + Ls)^{2} \left\| x_{0} - x^{\star}   \right\|^{2} + ks \left( f(x_{0}) - f(x^{\star}) \right)}.
\] 
\end{proof}

Finally, we present the implicit version of Theorem~\ref{thm: grad_flow_convex2}.
\begin{thm}
\label{thm: grad_descent_implicit_convex2}
Let $f \in \mathcal{F}_{L}^{1}(\mathbb{R}^n)$. The iterates  $\{x_{k}\}_{k=0}^{\infty}$ generated by implicit gradient descent~\eqref{eqn: im_grad} satisfy 
\begin{equation}
\label{eqn: gradient_descent_implicit_convex3}
\left\{ \begin{aligned}
          & f(x_{k}) - f(x^{\star})                                                  \leq     \frac{ \left\| x_{0} - x^{\star} \right\|^{2} }{2ks} \\
          & \min_{0 \leq i \leq k}\left\| \nabla f(x_i) \right\|^{2}     \leq     \frac{ 2\left\| x_{0} - x^{\star} \right\|^{2} }{s^2(k + 1)(k + 2)}.
          \end{aligned} \right.
\end{equation}
In addition, if the step size is set to $s = 1/L$, we have
\begin{equation}
\label{eqn: gradient_descent_implicit_convex4}
\left\{ \begin{aligned}
          & f(x_{k}) - f(x^{\star})                                                  \leq     \frac{L \left\| x_{0} - x^{\star} \right\|^{2} }{2k} \\
          & \min_{0 \leq i \leq k}\left\| \nabla f(x_i) \right\|^{2}     \leq     \frac{ 2L^2 \left\| x_{0} - x^{\star} \right\|^{2} }{(k + 1)(k + 2)}.
          \end{aligned} \right.
\end{equation}
\end{thm}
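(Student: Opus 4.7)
The plan is to reuse the Lyapunov function from the explicit analysis,
\[
\mathcal{E}(k) = ks\bigl(f(x_k) - f(x^\star)\bigr) + \tfrac{1}{2}\|x_k - x^\star\|^{2},
\]
which is the direct discrete analogue of the Lyapunov function used in Theorem~\ref{thm: grad_flow_convex2} and of the one used in the explicit case (Theorem~\ref{thm: grad_descent_convex3}). I would compute the one-step change $\mathcal{E}(k+1) - \mathcal{E}(k)$ by splitting it into three pieces: the prefactor contribution $s(f(x_{k+1}) - f(x^\star))$, the function-value change $ks(f(x_{k+1}) - f(x_k))$, and the distance change $\tfrac{1}{2}\bigl(\|x_{k+1} - x^\star\|^{2} - \|x_k - x^\star\|^{2}\bigr)$.

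The crucial difference from the explicit case lies in how one bounds the function-value change. In the proof of Theorem~\ref{thm: grad_descent_convex3} that step invokes $L$-smoothness, which is what forces the restriction $s \leq 1/L$; in the implicit setting I would instead use only convexity of $f$ evaluated at $x_{k+1}$, namely $f(x_k) \geq f(x_{k+1}) + \langle \nabla f(x_{k+1}),\, x_k - x_{k+1}\rangle$. Combining this with the implicit update $x_{k+1} - x_k = -s\nabla f(x_{k+1})$ yields $f(x_{k+1}) - f(x_k) \leq -s\|\nabla f(x_{k+1})\|^{2}$. For the distance piece, the same implicit update gives the exact identity
\[
\tfrac{1}{2}\bigl(\|x_{k+1} - x^\star\|^{2} - \|x_k - x^\star\|^{2}\bigr) = -s\bigl\langle \nabla f(x_{k+1}),\, x_{k+1} - x^\star\bigr\rangle - \tfrac{s^2}{2}\|\nabla f(x_{k+1})\|^{2},
\]
while convexity at $x_{k+1}$ upper bounds the prefactor piece by $s\langle \nabla f(x_{k+1}),\, x_{k+1} - x^\star\rangle$. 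The two inner-product terms cancel cleanly, leaving
\[
\mathcal{E}(k+1) - \mathcal{E}(k) \;\leq\; -\tfrac{(2k+1)s^2}{2}\,\|\nabla f(x_{k+1})\|^{2} \;\leq\; 0.
\]

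From this one-step contraction both claims follow by routine manipulation. Monotonicity of $\mathcal{E}$ yields $ks\bigl(f(x_k) - f(x^\star)\bigr) \leq \mathcal{E}(0) = \tfrac{1}{2}\|x_0 - x^\star\|^{2}$, which is the function-value bound, and the $s = 1/L$ specialization is immediate. Telescoping the one-step inequality from $0$ to $k-1$ and using $\sum_{i=0}^{k-1}(2i+1) = k^2$ gives $\min_{1 \leq i \leq k}\|\nabla f(x_i)\|^{2} \leq \|x_0 - x^\star\|^{2}/(s^2 k^2)$, which, after re-indexing, is tighter than (and hence implies) the stated bound $2\|x_0-x^\star\|^{2}/(s^2(k+1)(k+2))$. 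The main subtlety I expect is conceptual rather than computational: every first-order expansion must be anchored at the \emph{endpoint} $x_{k+1}$, because only then does the implicit update substitute in to cancel the inner products, and only then does the argument dispense entirely with $L$-smoothness — this is precisely the mechanism behind the unconditional stability of implicit gradient descent.
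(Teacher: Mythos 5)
Your approach is essentially the paper's: same Lyapunov function $\mathcal{E}(k)=ks(f(x_k)-f(x^\star))+\tfrac12\|x_k-x^\star\|^2$, same three-way decomposition of the one-step difference, and the same observation that the implicit update $x_{k+1}-x_k=-s\nabla f(x_{k+1})$ anchors everything at $x_{k+1}$ so the two inner products cancel. The one real difference is in how the prefactor piece $s(f(x_{k+1})-f(x^\star))$ is handled: the paper bounds $f(x_{k+1})-f(x^\star)-\langle\nabla f(x_{k+1}),x_{k+1}-x^\star\rangle\le -\tfrac{1}{2L}\|\nabla f(x_{k+1})\|^2$ using the co-coercivity form of smooth convexity, whereas you use plain convexity to bound this quantity by $0$. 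Your version dispenses with $L$-smoothness entirely and produces a per-step coefficient $-\tfrac{(2k+1)s^2}{2}$, which is in fact sharper than the paper's loosened coefficient $-\tfrac{(k+1)s^2}{2}$ for every $k\ge 1$; your accompanying remark that this is what makes the implicit scheme unconditionally stable is exactly the right intuition. The function-value bound then follows identically in both proofs.

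The one place that does not hold up is the final numerical claim. You assert that $\min_{1\le i\le k}\|\nabla f(x_i)\|^2 \le \|x_0-x^\star\|^2/(s^2k^2)$ "is tighter than (and hence implies)" the stated $2\|x_0-x^\star\|^2/(s^2(k+1)(k+2))$. This is false for $k\in\{1,2,3\}$: the inequality $1/k^2 \le 2/((k+1)(k+2))$ is equivalent to $k^2-3k-2\ge 0$, which fails below $k=4$. So strictly speaking your telescoping does not deliver the exact constant in the theorem for small $k$. (If you instead telescope up to index $k$ rather than $k-1$, you get $\min_{1\le j\le k+1}\|\nabla f(x_j)\|^2 \le \|x_0-x^\star\|^2/(s^2(k+1)^2)$, which \emph{is} uniformly $\le 2\|x_0-x^\star\|^2/(s^2(k+1)(k+2))$, but now the index set is $\{1,\dots,k+1\}$ rather than the $\{0,\dots,k\}$ in the statement.) The paper's own telescoping is equally terse and appears to share the same mild off-by-one ambiguity, so this is a cosmetic bookkeeping issue rather than a flaw in the core argument; still, the sentence as written overclaims.
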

\begin{proof}
The Lyapunov function is 
\[
\mathcal{E}(k) = ks \left( f(x_{k}) - f(x^{\star}) \right) + \frac{1}{2} \left\| x_{k} - x^{\star}   \right\|^{2}.
\]
Then, we calculate the iterate difference as
\begin{align*}
\lefteqn{\mathcal{E}(k + 1) - \mathcal{E}(k)}  \\
& =         s \left( f(x_{k + 1}) - f(x^{\star}) \right) + ks \left( f(x_{k + 1}) - f(x_{k}) \right) +  \frac{1}{2}\left\langle x_{k + 1} - x_{k}, x_{k + 1} + x_{k} - 2x^{\star} \right\rangle          \\
& \leq     s \left( f(x_{k + 1}) - f(x^{\star})  -  \left\langle \nabla f(x_{k + 1}), x_{k + 1} - x^{\star} \right\rangle \right)  \\
           & \quad + k s \left\langle \nabla f(x_{k + 1}), x_{k+1} - x_{k} \right\rangle  - \frac{1}{2}\left\| x_{k + 1} - x_{k}\right\|^{2} \\
& \leq     - s^{2} \left(  \frac{1}{2Ls} + k + \frac{1}{2}  \right) \left\| \nabla f(x_{k + 1}) \right\|^{2} \\
& \leq     - \frac{s^{2}}{2} (k + 1) \left\| \nabla f(x_{k + 1}) \right\|^{2}.
\end{align*}
Hence, the proof is complete.
\end{proof}


\section{Proofs for Section~\ref{sec: high_res}}
\label{sec: appendix_high_res}
Here, we first describe and prove Theorem~\ref{thm: high_nag-sc-three} below. Then we complete the proof of Theorem~\ref{thm:fixed-s-nag-three} by viewing it as a special case of Theorem~\ref{thm: high_nag-sc-three}.

\begin{thm}[Discretization of NAG-\texttt{SC} ODE --- General]\label{thm: high_nag-sc-three}
For any $f \in \mathcal{S}_{\mu, L}^{1}(\mathbb{R}^n)$, the following conclusions hold:
\begin{enumerate}[leftmargin=*,topsep=0pt]
\item[(a)] Taking $0 < s \leq 4/(9L)$, the symplectic Euler scheme satisfies
\begin{multline}\label{eqn: conver_nag-sc_sym_high_fix}
f(x_k) - f(x^\star) \\
 \leq \left( \frac{sL \left( 2 + (1 + 3\sqrt{\mu s})^{2} \right)}{(1 + \sqrt{\mu s})^{2}} + \frac{2\mu}{L} + \frac{1 + \sqrt{\mu s}}{2} - \frac{sL (1 + \sqrt{\mu s})^{2}}{2 (1 + 2\sqrt{\mu s})} \right)  \frac{L\left\| x_0 - x^\star \right\|^2}{\left( 1 + \frac{\sqrt{\mu s}}{6} \right)^{k}}.                                       
 \end{multline}

\item[(b)] Taking $0 < s \leq \mu/(100L^{2})$, the explicit Euler scheme satisfies
\begin{multline}\label{eqn: conver_nag-sc_ex_high_fix}
f(x_k) - f(x^\star) \\
\leq \left( \frac{3 - 2\sqrt{\mu s} + \mu s}{2 + 4 \sqrt{\mu s} + 2\mu s} \cdot sL +  \frac{2\mu}{L} + \frac{1 + \sqrt{\mu s}}{2}  \right)  L\left\| x_0 - x^\star \right\|^2 \left( 1 - \frac{\sqrt{\mu s}}{8} \right)^{k}.                                       
\end{multline}

\item[(c)] Taking $0 < s \leq 1/L$, the implicit Euler scheme satisfies
\begin{equation}
\label{eqn: conver_nag-sc_im_high_fix}
f(x_k) - f(x^\star) \leq \left( \frac{3 - 2\sqrt{\mu s} + \mu s}{2 + 4 \sqrt{\mu s} + 2\mu s} \cdot sL +  \frac{2\mu}{L} + \frac{1 + \sqrt{\mu s}}{2}  \right) \frac{ L\left\| x_0 - x^\star \right\|^2}{  \left( 1 + \frac{\sqrt{\mu s}}{4} \right)^{k} }.                                       
 \end{equation} 
\end{enumerate}
 \end{thm}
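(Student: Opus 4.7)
The plan is to establish each of the three bounds via a Lyapunov argument at the discrete level, using the Lyapunov function $\mathcal{E}(k)$ in~\eqref{eqn: Lypunov_symplectic_sc} for the symplectic scheme and natural variants for the explicit and implicit schemes. In each case the scheme outputs a three-term recursion relating $x_{k+1},v_{k+1}$ to $x_k,v_k$ via the gradient of $f$ at appropriate points, so the natural flow is: (i) compute $\mathcal{E}(k+1)-\mathcal{E}(k)$ and substitute the update rule; (ii) expand the squared norms and collect coefficients of $\|v_\bullet\|^2$, $\|\nabla f(x_\bullet)\|^2$, cross terms, and function-value gaps; (iii) use $\mu$-strong convexity at the appropriate anchor (i.e.\ at $x_{k+1}$ for (S) and (I), at $x_k$ for (E)) to turn $\langle\nabla f,\cdot-x^\star\rangle$ into function gaps plus $\tfrac{\mu}{2}\|\cdot-x^\star\|^2$; (iv) use $L$-smoothness to replace $f(x_{k+1})-f(x_k)$ by $\langle\nabla f(x_{k+1}),x_{k+1}-x_k\rangle$ plus a quadratic remainder, and to bound $\|\nabla f(x_{k+1})-\nabla f(x_k)\|\le L\|x_{k+1}-x_k\|=L\sqrt{s}\|v_k\|$; and (v) conclude with either $\mathcal{E}(k+1)-\mathcal{E}(k)\le -\tfrac{\sqrt{\mu s}}{6}\mathcal{E}(k+1)$ (symplectic), $\le -\tfrac{\sqrt{\mu s}}{8}\mathcal{E}(k)$ (explicit), or $\le -\tfrac{\sqrt{\mu s}}{4}\mathcal{E}(k+1)$ (implicit). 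Telescoping gives the geometric factor in the theorem.

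To translate $\mathcal{E}(k)$ back to $f(x_k)-f(x^\star)$, I would observe that the negative correction term $-\tfrac{(1+\sqrt{\mu s})^{2}}{1+2\sqrt{\mu s}}\cdot\tfrac{s}{2}\|\nabla f(x_k)\|^2$ can be absorbed into the positive term $(1+\sqrt{\mu s})(f(x_k)-f(x^\star))$ because $\|\nabla f(x_k)\|^2\le 2L(f(x_k)-f(x^\star))$ and the step size satisfies $sL\lesssim 1$. The initial value $\mathcal{E}(0)$ is then bounded using $v_0=-\tfrac{2\sqrt{s}\nabla f(x_0)}{1+\sqrt{\mu s}}$, $\|\nabla f(x_0)\|\le L\|x_0-x^\star\|$, and $f(x_0)-f(x^\star)\le \tfrac{L}{2}\|x_0-x^\star\|^2$. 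Collecting the resulting constant factor yields the prefactor appearing in \eqref{eqn: conver_nag-sc_sym_high_fix}-\eqref{eqn: conver_nag-sc_im_high_fix}.

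The scheme-specific differences sit in step~(i)--(ii) and in how the cross term $\sqrt{s}\bigl(\nabla f(x_{k+1})-\nabla f(x_k)\bigr)$ interacts with the velocity. For the symplectic case, replacing $v_{k+1}$ by $v_{k+1}=v_k+(\text{update})$ in the squared norm $\|2\sqrt{\mu}(x_{k+1}-x^\star)+v_k+\sqrt{s}\nabla f(x_k)\|^2$ yields a clean telescoping because the position update uses $v_k$ while the momentum damping uses $v_{k+1}$; this is what makes the scheme behave like an implicit one on the momentum part. For (I), both updates are evaluated at $k+1$, so the same algebra applies with uniformly stronger damping and thus admits the larger step $s=1/L$. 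For (E), both updates use $v_k$, so the damping term becomes $-2\sqrt{\mu s}\|v_k\|^2$ in $\mathcal{E}(k+1)-\mathcal{E}(k)$ rather than $-2\sqrt{\mu s}\|v_{k+1}\|^2$, and the cross term $\sqrt{s}L\cdot\sqrt{s}\|v_k\|$ produced by the gradient-difference approximation generates a $sL\|v_k\|^2$ contribution that must be dominated by $\sqrt{\mu s}\|v_k\|^2$; this forces $sL\lesssim \sqrt{\mu s}$, i.e.\ $s\lesssim \mu/L^2$.

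The main obstacle is the algebraic tracking in step~(ii) for the explicit case: obtaining the sharp step-size threshold $s\le \mu/(100L^2)$ and the rate $(1-\sqrt{\mu s}/8)^k$ requires carefully splitting cross terms via Young's inequality with coefficients tuned to the particular form of $\mathcal{E}(k)$, and then verifying that every residual quadratic in $\|v_k\|$, $\|\nabla f(x_k)\|$, and $\sqrt{\mu}(x_{k+1}-x^\star)$ has a nonpositive coefficient under the step-size restriction. For (S) and (I), the analogous verification is more forgiving because evaluating the damping at $v_{k+1}$ supplies an automatic stabilization, but one still has to check that the $-\tfrac{\sqrt{\mu s}}{6}\mathcal{E}(k+1)$ gap survives after accounting for the negative correction term in $\mathcal{E}$; this is where the restriction $s\le 4/(9L)$ enters for (S).
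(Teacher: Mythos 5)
Your proposal is correct and matches the paper's approach in structure: a discrete Lyapunov argument that establishes $\mathcal{E}(k+1)-\mathcal{E}(k)\le -\tfrac{\sqrt{\mu s}}{6}\mathcal{E}(k+1)$, $\le -\tfrac{\sqrt{\mu s}}{8}\mathcal{E}(k)$, $\le -\tfrac{\sqrt{\mu s}}{4}\mathcal{E}(k+1)$ for (S), (E), (I) respectively, telescopes, and then bounds $\mathcal{E}(0)$. You also correctly identify the anchoring of the convexity inequality ($x_{k+1}$ for (S)/(I), $x_k$ for (E)) and the mechanism forcing $s\lesssim\mu/L^2$ for the explicit scheme.

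Two details worth being precise about, since they are not cosmetic. First, for (E) and (I) the paper does not use ``variants'' of \eqref{eqn: Lypunov_symplectic_sc}; it \emph{drops} the negative correction term $-\tfrac{(1+\sqrt{\mu s})^2}{1+2\sqrt{\mu s}}\cdot\tfrac{s}{2}\|\nabla f(x_k)\|^2$ altogether and replaces the anchor $x_{k+1}$ in the second squared norm with $x_k$, i.e.\ it uses $\tfrac14\|v_k\|^2+\tfrac14\|2\sqrt{\mu}(x_k-x^\star)+v_k+\sqrt{s}\nabla f(x_k)\|^2+(1+\sqrt{\mu s})(f(x_k)-f(x^\star))$. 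The $x_{k+1}$ anchor and the negative correction term are both specific to the half-implicit structure of the symplectic scheme (position updated with $v_k$, momentum damped at $v_{k+1}$), and carrying them over to the purely explicit or purely implicit schemes would obstruct the telescoping you describe in step (ii). Second, in step (iv) the paper does not invoke the direct Lipschitz bound $\|\nabla f(x_{k+1})-\nabla f(x_k)\|\le L\sqrt{s}\|v_k\|$ for (S) and (I); it instead uses cocoercivity, $\langle\nabla f(x_{k+1})-\nabla f(x_k),x_{k+1}-x_k\rangle\ge \tfrac1L\|\nabla f(x_{k+1})-\nabla f(x_k)\|^2$, which is what lets the $\|\nabla f(x_{k+1})-\nabla f(x_k)\|^2$ term be cancelled cleanly rather than merely absorbed into $\|v_k\|^2$. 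Your approach with the cruder bound would likely cost extra factors of $sL$ and obscure why (S) tolerates $s$ of order $1/L$ while (E) requires $s$ of order $\mu/L^2$, but the overall rate would still come through. These are the only places your plan diverges from the paper; both are instructive but neither is fatal.
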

\subsection{Proof of Theorem~\ref{thm: high_nag-sc-three}}
\label{subsec: high-nag-sc}

\begin{enumerate}
\item [(a)]

The Lyapunov function is constructed as 
\begin{multline*}
\mathcal{E}(k) =     \frac{1}{4} \left\| v_{k} \right\|^{2} + \frac{1}{4} \left\| 2\sqrt{\mu}(x_{k + 1} - x^\star) + v_{k}  + \sqrt{s} \nabla f(x_{k}) \right\|^{2} \\
                              + \left( 1 + \sqrt{\mu s} \right) \left( f(x_{k}) - f(x^\star) \right) - \frac{( 1 + \sqrt{\mu s} )^{2}}{1 + 2 \sqrt{\mu s}} \cdot \frac{s}{2} \left\| \nabla f(x_{k}) \right\|^{2}.
\end{multline*}
With the basic inequality for $f \in \mathcal{S}_{\mu, L}^{1}(\mathbb{R}^n)$
\[
f(x_{k + 1}) - f(x_{k}) \leq \left\langle \nabla f(x_{k + 1}), x_{k + 1} - x_{k} \right\rangle - \frac{1}{2L} \left\| \nabla f(x_{k + 1}) - \nabla f(x_{k}) \right\|^{2},
\]
then the iterate difference can be calculated as
\begin{align*}
\lefteqn{ \mathcal{E}(k + 1) - \mathcal{E}(k)}  \\ 
& =  \frac{1}{4} \left\langle v_{k + 1} - v_{k}, v_{k + 1} + v_{k} \right\rangle + \left( 1 + \sqrt{\mu s} \right) \left( f(x_{k + 1}) - f(x_{k}) \right) \\
& \mathrel{\phantom{=}}   
 \begin{aligned}
+  \frac{1}{4}\langle    & 2\sqrt{\mu} (x_{k + 2} - x_{k + 1}) +v_{k + 1} - v_{k}  +  \sqrt{s} \left( \nabla f(x_{k + 1}) - \nabla f(x_{k}) \right),  \\                                                                    
                                   &\begin{aligned}
                                   2 \sqrt{\mu} \left( x_{k + 2} + x_{k + 1} - 2x^\star  \right) &+  v_{k + 1} + v_{k}   \\ 
                                                                                                                           &+ \sqrt{s} \left( \nabla f(x_{k + 1}) + \nabla f(x_{k}) \right)  \rangle   
                                      \end{aligned}
\end{aligned}  \\
& \mathrel{\phantom{=}}   -  \frac{( 1 + \sqrt{\mu s} )^{2}}{1 + 2 \sqrt{\mu s}} \cdot \frac{s}{2} \left( \left\| \nabla f(x_{k + 1}) \right\|^{2} - \left\| \nabla f(x_{k}) \right\|^{2} \right)   \\                                         
&  \leq       -  \sqrt{\mu s} \left\| v_{k + 1} \right\|^{2} - \frac{\sqrt{s}}{2 (1 + \sqrt{\mu s})} \left\langle \nabla f(x_{k + 1}) - \nabla f(x_{k}), v_{k} \right\rangle \\
&\mathrel{\phantom{\leq}}  +  \frac{s}{2 \left( 1 + 2 \sqrt{\mu s} \right)} \left\| \nabla f(x_{k + 1}) - \nabla f(x_{k}) \right\|^{2} \\ 
&\mathrel{\phantom{\leq}} +  \frac{s}{2} \cdot \frac{1 + \sqrt{\mu s}}{1 + 2 \sqrt{\mu s}} \left\langle \nabla f(x_{k + 1}) - \nabla f(x_{k}), \nabla f(x_{k + 1}) \right\rangle \\
&\mathrel{\phantom{\leq}} -  \frac{\sqrt{s} \left(1 + \sqrt{\mu s} \right)}{2} \left\langle \nabla f(x_{k + 1}), v_{k + 1} \right\rangle - \frac{1}{4} \left\| v_{k + 1} - v_{k} \right\|^{2}   \\
&\mathrel{\phantom{\leq}} + \left( 1 + \sqrt{\mu s} \right) \sqrt{s} \left\langle \nabla f(x_{k + 1}), v_{k} \right\rangle  - \frac{1 + \sqrt{\mu s}}{2L} \left\| \nabla f(x_{k + 1}) - \nabla f(x_{k}) \right\|^{2}     \\
&\mathrel{\phantom{\leq}}\begin{aligned} 
                                         -\frac{1}{2} \langle &(1 + \sqrt{\mu s})\sqrt{s} \nabla f(x_{k + 1}), \\
                                                                       &\left( 1 + 2\sqrt{\mu s} \right) v_{k + 1} + 2\sqrt{\mu} (x_{k + 1} - x^\star) + \sqrt{s} \nabla f(x_{k + 1}) \rangle  
                                           \end{aligned}   \\
&\mathrel{\phantom{\leq}} -  \frac{1}{4} \left( 1 + \sqrt{\mu s} \right)^{2} s \left\| \nabla f(x_{k + 1}) \right\|^{2}  -   \frac{s}{2} \left( \left\| \nabla f(x_{k + 1}) \right\|^{2} - \left\| \nabla f(x_{k}) \right\|^{2} \right)  \\
& \leq        -  \sqrt{\mu s}  \left( \left\| v_{k + 1} \right\|^{2} + \left(1 + \sqrt{\mu s}\right) \left\langle \nabla f(x_{k + 1}), x_{k + 1} - x^\star \right\rangle \right)   \\
 &\mathrel{\phantom{\leq}}    -  \left( \frac{1 + \sqrt{\mu s}}{2} \right) \left[ \sqrt{s} \left\langle \nabla f(x_{k + 1}), \left( 1 + 2\sqrt{\mu s} \right) v_{k + 1} - v_{k} \right\rangle + s \left\| \nabla f(x_{k + 1}) \right\|^{2} \right]  \\
 &\mathrel{\phantom{\leq}}  - \frac{\sqrt{s}}{2 (1 + \sqrt{\mu s})} \left\langle \nabla f(x_{k + 1}) - \nabla f(x_{k}), v_{k} \right\rangle  \\
 &\mathrel{\phantom{\leq}}  + \frac{s}{2} \cdot \frac{1 + \sqrt{\mu s}}{1 + 2 \sqrt{\mu s}} \left\langle \nabla f(x_{k + 1}) - \nabla f(x_{k}), \nabla f(x_{k + 1}) \right\rangle    \\
 &\mathrel{\phantom{\leq}}   \begin{aligned}  
                                               - \frac{1}{4} \left[ \left\| v_{k + 1} - v_{k} \right\|^{2} \right. &+   \left( 1 + \sqrt{\mu s} \right)^{2} s \left\| \nabla f(x_{k + 1}) \right\|^{2} \\
                                                                  &  + \left. 2 (1 + \sqrt{\mu s}) \sqrt{s} \left\langle \nabla f(x_{k + 1}), v_{k + 1} - v_{k} \right\rangle \right]      
                                             \end{aligned}     \\
 &\mathrel{\phantom{\leq}}       - \frac{1}{2} \left( \frac{1 + \sqrt{\mu s} }{L} - \frac{s}{1 + 2 \sqrt{\mu s}} \right) \left\| \nabla f(x_{k + 1}) - \nabla f(x_{k}) \right\|^{2}    \\
 &\mathrel{\phantom{\leq}}       - \frac{( 1 + \sqrt{\mu s} )^{2}}{1 + 2 \sqrt{\mu s}} \cdot  \frac{s}{2} \left( \left\| \nabla f(x_{k + 1}) \right\|^{2} - \left\| \nabla f(x_{k}) \right\|^{2} \right).                                                                                                 
\end{align*}

Noting the following two inequalities
\begin{align*}
\lefteqn{\begin{aligned}
- \frac{1}{4} \left[ \left\| v_{k + 1} - v_{k} \right\|^{2} \right. &+ \left( 1 + \sqrt{\mu s} \right)^{2} s \left\| \nabla f(x_{k + 1}) \right\|^{2} \\ 
             &+ \left. 2 (1 + \sqrt{\mu s}) \sqrt{s} \left\langle \nabla f(x_{k + 1}), v_{k + 1} - v_{k} \right\rangle   \right] 
\end{aligned}}    \\         
&\mathrel{\phantom{=}} =       - \frac{1}{4} \left\| v_{k + 1} - v_{k} + \left( 1 + \sqrt{\mu s} \right) \sqrt{s} \nabla f(x_{k})  \right\|^{2}   \leq 0,   
\end{align*}
and 
\begin{align*}
     & \lefteqn{ -  \frac{1}{2} \left( 1 + \sqrt{\mu s} \right) \left[ \sqrt{s} \left\langle \nabla f(x_{k + 1}), \left( 1 + 2\sqrt{\mu s} \right) v_{k + 1} - v_{k} \right\rangle + s \left\| \nabla f(x_{k + 1}) \right\|^{2} \right] } \\
&   \begin{aligned}  
       = -  \left( \frac{1 + \sqrt{\mu s}}{2} \right) [ & \sqrt{s} \langle \nabla f(x_{k + 1}), \\
       &\mathrel{\phantom{ \sqrt{s} \langle}}-\sqrt{s} \left( \nabla f(x_{k + 1}) - \nabla f(x_{k}) \right)  - \sqrt{s} (1 + \sqrt{\mu s}) \nabla f(x_{k + 1})
       \rangle \\
    &  + s \left\| \nabla f(x_{k + 1}) \right\|^{2} ] 
         \end{aligned} \\
&=      \frac{\left( 1 + \sqrt{\mu s} \right) s}{2} \left( \left\langle \nabla f(x_{k + 1}) - \nabla f(x_{k}), \nabla f(x_{k + 1}) \right\rangle + \sqrt{\mu s}  \left\| \nabla f(x_{k + 1}) \right\|^{2} \right),
\end{align*}
we see that the iterate difference is
\begin{align*}
& \lefteqn{\mathcal{E}(k + 1) - \mathcal{E}(k)} \\
   & \leq           - \sqrt{\mu s} \left[ \left\| v_{k + 1} \right\|^{2} + \left(1 + \sqrt{\mu s} \right) \left( \left\langle \nabla f(x_{k + 1}), x_{k + 1} - x^\star \right\rangle   - \frac{s}{2} \left\| \nabla f(x_{k + 1}) \right\|^{2} \right) \right]  \\
                                                                   & \mathrel{ \phantom{\leq} }     - \frac{1}{2 (1 + \sqrt{\mu s})} \left\langle \nabla f(x_{k + 1}) - \nabla f(x_{k}), x_{k + 1} - x_{k} \right\rangle   \\
                                                                   & \mathrel{ \phantom{\leq} }      + \frac{( 1 + \sqrt{\mu s} )^{2}}{1 + 2 \sqrt{\mu s}} \cdot s \left\langle \nabla f(x_{k + 1}), \nabla f(x_{k + 1}) - \nabla f(x_{k}) \right\rangle  \\
                                                                   &  \mathrel{ \phantom{\leq} }       - \frac{1}{2} \left( \frac{1 + \sqrt{\mu s} }{L} - \frac{s}{1 + 2 \sqrt{\mu s}} \right) \left\| \nabla f(x_{k + 1}) - \nabla f(x_{k}) \right\|^{2}    \\
                                                                    & \mathrel{ \phantom{\leq} }      -  \frac{( 1 + \sqrt{\mu s} )^{2}}{1 + 2 \sqrt{\mu s}} \cdot \frac{s}{2} \left( \left\| \nabla f(x_{k + 1}) \right\|^{2} - \left\| \nabla f(x_{k}) \right\|^{2} \right)       \\
                                                                    &  \leq      - \sqrt{\mu s} \left[ \left\| v_{k + 1} \right\|^{2} + \left(1 + \sqrt{\mu s} \right) \left( \left\langle \nabla f(x_{k + 1}), x_{k + 1} - x^\star \right\rangle   - \frac{s}{2} \left\| \nabla f(x_{k + 1}) \right\|^{2} \right) \right]  \\
                                                                   &  \mathrel{ \phantom{\leq} }       - \frac{1}{2 (1 + \sqrt{\mu s})} \left\langle \nabla f(x_{k + 1}) - \nabla f(x_{k}), x_{k + 1} - x_{k} \right\rangle  \\
                                                                   &  \mathrel{ \phantom{\leq} }    + \frac{1}{2} \left( \frac{1}{1 + 2 \sqrt{\mu s}} + \frac{( 1 + \sqrt{\mu s} )^{2}}{1 + 2 \sqrt{\mu s}} -  \frac{1 + \sqrt{\mu s} }{Ls}  \right) s  \left\| \nabla f(x_{k + 1}) - \nabla f(x_{k}) \right\|^{2}.                             
\end{align*}
Furthermore, taking the basic inequality for $f \in \mathcal{S}_{\mu, L}^{1}(\mathbb{R}^{n})$
\[
\left\langle \nabla f(x_{k + 1}) - \nabla f(x_{k}), x_{k + 1} - x_{k} \right\rangle \geq \frac{1}{L} \left\| \nabla f(x_{k + 1}) - \nabla f(x_{k}) \right\|^{2},
\]
the iterate difference can be calculated as
\begin{align*}
 &\mathcal{E}(k + 1) - \mathcal{E}(k)   \\
 &\leq           - \sqrt{\mu s} \left[ \left\| v_{k + 1} \right\|^{2} + \left(1 + \sqrt{\mu s} \right) \left( \left\langle \nabla f(x_{k + 1}), x_{k + 1} - x^\star \right\rangle   - \frac{s}{2} \left\| \nabla f(x_{k + 1}) \right\|^{2} \right) \right]  \\
  & \mathrel{ \phantom{\leq} } - \frac{2 + 2 \sqrt{\mu s} + \mu s}{2 \left(1 + 2 \sqrt{\mu s} \right)} \left( \frac{1}{L} - s \right) \left\| \nabla f(x_{k + 1}) - \nabla f(x_{k}) \right\|^{2}.
\end{align*}

Next, we consider how to set the step size $s$. First,  when the step size satisfies $s \leq 1/L$, we have
\begin{align*}
& \lefteqn{\mathcal{E}(k + 1) - \mathcal{E}(k)}  \\
 &\leq           - \sqrt{\mu s} \left[ \left\| v_{k + 1} \right\|^{2} + \left(1 + \sqrt{\mu s} \right) \left( \left\langle \nabla f(x_{k + 1}), x_{k + 1} - x^\star \right\rangle   - \frac{s}{2} \left\| \nabla f(x_{k + 1}) \right\|^{2} \right) \right]. 
\end{align*}
Noting the basic inequality for $f \in \mathcal{S}_{\mu, L}^{1}(\mathbb{R}^{n})$
\[
f(x_{k + 1}) - f(x^\star) \leq \left\langle \nabla f(x_{k + 1}), x_{k + 1} - x^\star \right\rangle   - \frac{1}{2L} \left\| \nabla f(x_{k + 1}) \right\|^2,
\]
the iterate difference can be obtained as 
\begin{align*}
&\lefteqn{\mathcal{E}(k + 1) - \mathcal{E}(k)}   \\
&\begin{aligned}   \leq  - \sqrt{\mu s} \left[ \left( f(x_{k + 1}) - f(x^\star) \right) \right.   &+ \left\| v_{k + 1}\right\|^2 + \frac{\mu}{2} \left\| x_{k} - x^\star \right\|^{2} \\
                                                                                                                                   &   +  \left.\sqrt{\mu s} \left( f(x_{k + 1}) - f(x^\star) - \frac{s}{2} \left\| \nabla f(x_{k + 1}) \right\|^{2} \right) \right].
               \end{aligned} 
\end{align*}

Furthermore, using the Cauchy-Schwarz inequality
\begin{align*}
& \lefteqn{\left\| 2 \sqrt{\mu} \left( x_{k + 1} - x^\star \right) + v_{k} + \sqrt{s} \nabla f(x_{k}) \right\|^{2} }  \\
& =     \left\| 2 \sqrt{\mu} (x_{k} - x^\star) + (1 + 2\sqrt{\mu s}) v_{k} + \sqrt{s} \nabla f(x_{k})  \right\|^{2} \\
& \leq 3 \left( 4 \mu \left\| x_{k} - x^\star \right\|^{2} +  (1 + 2\sqrt{\mu s})^2 \left\| v_{k} \right\|^{2} + s \left\| \nabla f(x_{k}) \right\|^{2} \right),
\end{align*}
and the following basic inequality for $f \in \mathcal{S}_{\mu, L}^{1}(\mathbb{R}^n)$ 
\begin{align*}
&\lefteqn{\frac{3s}{4} \left\| \nabla f(x_{k}) \right\|^{2} - \frac{(1 + \sqrt{\mu s})^{2}}{1 + 2 \sqrt{\mu s}} \cdot \frac{s}{2} \left\| \nabla f(x_{k}) \right\|^{2}} \\
& \leq \frac{Ls}{2} \left( f(x_{k}) - f(x^\star) \right) - \frac{\mu s^{2}}{2 \left(1 + 2 \sqrt{\mu s}\right)} \left\| \nabla f(x_{k}) \right\|^{2},
\end{align*}
the Lyapunov function satisfies
\begin{align*}
\mathcal{E}(k) & \leq  \left( 1 + \sqrt{\mu s}  + \frac{Ls}{2}\right) \left( f(x_{k}) - f(x^\star) \right)  + \left( 1 + 3\sqrt{\mu s} + 3 \mu s \right) \left\| 
v_{k} \right\|^2\\
                               & \mathrel{ \phantom{\leq} }  + 3\mu \left\| x_{k} - x^\star \right\|^{2} + \frac{\mu s}{1 + 2\sqrt{\mu s}} \left( f(x_{k}) - f(x^\star) - \frac{s}{2} \left\| \nabla f(x_{k}) \right\|^{2} \right).
\end{align*}
Therefore, when $s \leq 4/ (9L)$, the iterate difference for the Lyapunov function satisfies
\[
\mathcal{E}(k + 1) - \mathcal{E}(k) \leq - \frac{\sqrt{\mu s}}{6} \mathcal{E}(k + 1). 
\]
Hence, the proof is complete.

\item[(b)]
The Lyapunov function is 
\begin{align*}
\mathcal{E}(k) =  \frac{1}{4} \left\| v_{k} \right\|^{2} &+ \left( 1 + \sqrt{\mu s} \right) \left( f(x_{k}) - f(x^\star) \right) \\
                                                                               & + \frac{1}{4} \left\| 2\sqrt{\mu}(x_{k} - x^\star) + v_{k} + \sqrt{s} \nabla f(x_{k}) \right\|^{2}. 
\end{align*}
With the basic inequality for $f \in \mathcal{S}_{\mu, L}^{1}(\mathbb{R}^n)$
\[
f(x_{k + 1}) - f(x_{k}) \leq \left\langle \nabla f(x_{k}), x_{k + 1} - x_{k} \right\rangle + \frac{L}{2} \left\| x_{k + 1} - x_{k} \right\|^{2}, 
\]
we can calculate the iterate difference
\begin{align*}
& \lefteqn{\mathcal{E}(k + 1) - \mathcal{E}(k)}\\
&  =    \frac{1}{4} \left\langle v_{k + 1} - v_{k}, v_{k + 1} + v_{k} \right\rangle + \left( 1 + \sqrt{\mu s} \right) \left( f(x_{k + 1}) - f(x_{k}) \right) \\
&  \mathrel{ \phantom{\leq} }       
\begin{aligned} 
+  \frac{1}{4} \langle &2\sqrt{\mu} (x_{k + 1} - x_{k}) + v_{k + 1} - v_{k} + \sqrt{s} \left( \nabla f(x_{k + 1}) - \nabla f(x_{k}) \right),  \\
                                & 2 \sqrt{\mu} \left( x_{k + 1} + x_{k} - 2x^\star \right) + v_{k + 1} + v_{k} +  \left. \sqrt{s} ( \nabla f(x_{k + 1}) + \nabla f(x_{k}) \right\rangle \end{aligned}  \\
&         \leq    \frac{1}{2} \left\langle v_{k+1} - v_{k}, v_{k} \right\rangle  + \frac{1}{4} \left\| v_{k + 1} - v_{k} \right\|^{2} \\
&   \mathrel{ \phantom{\leq} }      + \left( 1 + \sqrt{\mu s} \right) \left( \left\langle \nabla f(x_{k}), x_{k+ 1} - x_{k} \right\rangle + \frac{L}{2} \left\| x_{k + 1} - x_{k} \right\|^2\right)  \\
&   \mathrel{ \phantom{\leq} }       - \frac{1}{2} \left\langle  (1 + \sqrt{\mu s}) \sqrt{s}\nabla f(x_{k}), 2 \sqrt{\mu} (x_{k} - x^\star) + v_{k} + \sqrt{s} \nabla f(x_{k}) \right\rangle \\
&    \mathrel{ \phantom{\leq} }       + \frac{1}{4} \left\| (1 + \sqrt{\mu s}) \sqrt{s}\nabla f(x_{k}) \right\|^2  \\
&         =       -  \sqrt{\mu s} \left\| v_{k} \right\|^{2}  - \frac{1}{2} \left\langle \nabla f(x_{k + 1}) - \nabla f(x_{k}), x_{k + 1} - x_{k} \right\rangle \\
&  \mathrel{ \phantom{=} }        + \frac{1}{4} \left\| 2\sqrt{\mu s} v_{k} + \sqrt{s} \left( \nabla f(x_{k + 1}) - \nabla f(x_{k}) \right) + \sqrt{s} \left( 1 + \sqrt{\mu s} \right) \nabla f(x_{k}) \right\|^{2}  \\
&  \mathrel{ \phantom{=} }        + \frac{(1 + \sqrt{\mu s})sL}{2} \left\| v_{k} \right\|^{2}  - \sqrt{\mu s}  \left( 1 + \sqrt{\mu s} \right) \left\langle \nabla f(x_{k}), x_{k} - x^\star \right\rangle  \\
&   \mathrel{ \phantom{=} }         - \frac{\left( 1 + \sqrt{\mu s} \right)s}{2} \left\| \nabla f(x_{k}) \right\|^{2} + \frac{1}{4} \left( 1 + \sqrt{\mu s} \right)^{2} s \left\| \nabla f(x_{k}) \right\|^{2}.  
\end{align*}  

Using the Cauchy-Schwartz inequality
\begin{align*}
              \lefteqn{\left\| 2\sqrt{\mu s} v_{k} + \sqrt{s} \left( \nabla f(x_{k + 1}) - \nabla f(x_{k}) \right) + \sqrt{s} \left( 1 + \sqrt{\mu s} \right) \nabla f(x_{k}) \right\|^{2}} \\
  & \leq       12 \mu s \left\| v_{k} \right\|^{2} + 3 s \left\|  \nabla f(x_{k + 1}) - \nabla f(x_{k})  \right\|^{2} + 3s \left( 1 + \sqrt{\mu s} \right)^{2} \left\| \nabla f(x_0) \right\|^{2},            
\end{align*}
the iterate difference for the Lyapunov function can be calculated as 
\begin{align*}
&\lefteqn{\mathcal{E}(k + 1) - \mathcal{E}(k) }  \\
          & \leq             -  \sqrt{\mu s} \left( \left\| v_{k} \right\|^{2} + \left( 1 + \sqrt{\mu s} \right) \left\langle \nabla f(x_{k}), x_{k} - x^\star \right\rangle + \frac{s}{2} \left\| \nabla f(x_{k}) \right\|^{2} \right) \\
                                                                    &  \mathrel{ \phantom{\leq} }      -  \frac{1}{2} \left\langle \nabla f(x_{k + 1}) - \nabla f(x_{k}), x_{k + 1} - x_{k} \right\rangle + \frac{3s}{4} \left\| \nabla f(x_{k + 1}) - \nabla f(x_{k}) \right\|^{2} \\
                                                                    &\mathrel{ \phantom{\leq} }       + \left( 3 \mu s + \frac{(1 + \sqrt{\mu s})sL}{2} \right) \left\| v_{k} \right\|^{2}       +  \left[  \left(1 + \sqrt{\mu s} \right)^{2} - \frac{1}{2} \right] s \left\| \nabla f(x_{k}) \right\|^{2}.
 \end{align*}

Furthermore, combined with the basic inequality for $f \in \mathcal{S}_{\mu, L}^{1}(\mathbb{R}^n)$, 
\[
\left\{ \begin{aligned}
            &  \left\| \nabla f(x_{k + 1}) - \nabla f(x_{k}) \right\|^{2} \leq  L \left\langle \nabla f(x_{k + 1}) - \nabla f(x_{k}), x_{k + 1} - x_{k} \right\rangle \\
            &   \left\| \nabla f(x_{k}) \right\|^{2} \leq  L \left\langle \nabla f(x_{k}), x_{k } - x^\star \right\rangle,
         \end{aligned} \right. 
\]
the iterate difference for the Lyapunov function can be calculated as
\begin{align*}
 &\lefteqn{\mathcal{E}(k + 1) - \mathcal{E}(k)}     \\
   & \leq      -  \frac{\sqrt{\mu s}}{2} \left( \left\| v_{k} \right\|^{2} + \left( 1 + \sqrt{\mu s} \right) \left\langle \nabla f(x_{k}), x_{k} - x^\star \right\rangle + \frac{s}{2} \left\| \nabla f(x_{k}) \right\|^{2}  \right)  \\
                                                               & \mathrel{ \phantom{\leq} }     -  \left( \frac{1}{2L} - \frac{3s}{4}\right) \left\| \nabla f(x_{k + 1}) - \nabla f(x_{k}) \right\|^{2}  \\
                                                               & \mathrel{ \phantom{\leq} }      - \left( \frac{\sqrt{\mu s}}{2} - 3 \mu s - \frac{(1 + \sqrt{\mu s}) sL}{2} \right) \left\| v_{k} \right\|^{2} \\
                                                               & \mathrel{ \phantom{\leq} }      - \left[ \frac{\sqrt{\mu s} \left( 1 + \sqrt{\mu s} \right)}{2L} - \left( \frac{1}{2} +  \sqrt{\mu s}\right) \left( 1 + \sqrt{\mu s}\right) s  \right] \left\| \nabla f(x_{k})\right\|^{2}.
\end{align*}

Simple calculation tells us when the step size satisfies $s \leq \mu / (100 L^{2})$, we have
\begin{multline*}
\mathcal{E}(k + 1) - \mathcal{E}(k)  \\  \leq          -  \frac{\sqrt{\mu s}}{2} \left( \left\| v_{k} \right\|^{2} + \left( 1 + \sqrt{\mu s} \right) \left\langle \nabla f(x_{k}), x_{k} - x^\star \right\rangle + \frac{s}{2} \left\| \nabla f(x_{k}) \right\|^{2} \right).
\end{multline*}
Furthermore, taking the Cauchy-Schwartz inequality
\begin{multline*}
\left\| 2 \sqrt{\mu} \left( x_{k + 1} - x^\star \right) + v_{k} + \sqrt{s} \nabla f(x_{k}) \right\|^{2}   \\ \leq 3 \left( 4 \mu \left\| x_{k} - x^\star \right\|^{2} +  \left\| v_{k} \right\|^{2} + s \left\| \nabla f(x_{k}) \right\|^{2} \right),
\end{multline*}
we can obtain the final estimate for the iterate difference
\[
\mathcal{E}(k + 1) - \mathcal{E}(k)    \leq          -  \frac{\sqrt{\mu s}}{8} \mathcal{E}(k).
\]
Hence, the proof is complete.

\item[(c)]

The Lyapunov function is 
\begin{align*}
\mathcal{E}(k) = \frac{1}{4} \left\| v_{k} \right\|^{2} &+ \left( 1 + \sqrt{\mu s} \right) \left( f(x_{k}) - f(x^\star) \right) \\
                                                                               &+ \frac{1}{4} \left\| 2\sqrt{\mu}(x_k - x^\star) + v_{k} + \sqrt{s} \nabla f(x_{k}) \right\|^{2} 
\end{align*}
With the Cauchy-Schwartz inequality
\begin{multline*}
\left\| 2 \sqrt{\mu} \left( x_{k} - x^\star \right) + v_{k} + \sqrt{s} \nabla f(x_{k}) \right\|^{2} \\ \leq 3 \left( 4 \mu \left\| x_{k} - x^\star \right\|^{2} + \left\| v_{k} \right\|^{2} + s \left\| \nabla f(x_{k}) \right\|^{2} \right),
\end{multline*}
and the basic inequality for $f \in \mathcal{S}_{\mu, L}^{1}(\mathbb{R}^n)$
\[
\left\{ \begin{aligned}
         & f(x_{k + 1}) - f(x_{k}) \leq \left\langle \nabla f(x_{k + 1}), x_{k + 1} - x_{k} \right\rangle  \\
         & f(x^\star) \geq f(x_{k + 1}) + \left\langle \nabla f(x_{k + 1}), x^\star - x_{k + 1} \right\rangle + \frac{\mu}{2} \left\| x_{k + 1} - x^\star \right\|^{2}  \\
         & \left\langle \nabla f(x_{k + 1}) - \nabla f(x_{k}), x_{k + 1} - x_{k} \right\rangle \geq \mu \left\| x_{k + 1} - x_{k} \right\|^{2} \geq 0,
         \end{aligned} \right.
\]
we can calculate the iterate difference
\begin{align*}
& \lefteqn{\mathcal{E}(k + 1) - \mathcal{E}(k) }    \\
& =                   \frac{1}{4} \left\langle v_{k + 1} - v_{k}, v_{k + 1} + v_{k} \right\rangle + \left( 1 + \sqrt{\mu s} \right) \left( f(x_{k + 1}) - f(x_{k}) \right) \\
&  \mathrel{ \phantom{=} }              +  \frac{1}{4} \left\langle 2\sqrt{\mu} (x_{k + 1} - x_{k}) + v_{k + 1} - v_{k} + \sqrt{s} \left( \nabla f(x_{k + 1}) - \nabla f(x_{k}) \right) \right. \\
& \mathrel{ \phantom{=} }    \left. 2 \sqrt{\mu} \left( x_{k + 1} + x_{k} - 2x^\star \right) + v_{k + 1} + v_{k} + \sqrt{s} \left( \nabla f(x_{k + 1}) + \nabla f(x_{k}) \right)  \right\rangle \\
&      \leq              \frac{1}{2} \left\langle v_{k + 1} - v_{k}, v_{k + 1} \right\rangle + \left( 1 + \sqrt{\mu s} \right) \left\langle \nabla f(x_{k + 1}), x_{k + 1} - x_{k} \right\rangle   \\
&\mathrel{ \phantom{\leq} }                 -   \frac{1}{2} \left\langle ( 1 + \sqrt{\mu s} ) \sqrt{s} \nabla f(x_{k + 1}), 2 \sqrt{\mu} (x_{k + 1} - x^\star ) + v_{k + 1} + \sqrt{s} \nabla f(x_{k + 1})  \right\rangle    \\
& \mathrel{ \phantom{\leq} }                -    \frac{1}{4} \left\| v_{k + 1} - v_{k} \right\|^{2} - \frac{1}{4} \left\| \left( 1 + \sqrt{\mu s} \right) \sqrt{s} \nabla f(x_{k + 1})  \right\|^{2}                                      \\
&  \leq         -     \sqrt{\mu s} \left( \left\| v_{k + 1} \right\|^{2} + \left( 1 + \sqrt{\mu s} \right) \left\langle \nabla f(x_{k + 1}), x_{k + 1} - x^\star \right\rangle + \frac{s}{2} \left\| \nabla f(x_{k + 1}) \right\|^{2}  \right) \\
& \mathrel{ \phantom{\leq} }                -     \frac{1}{2} \left\langle \nabla f(x_{k + 1}) - \nabla f(x_{k}), x_{k + 1} - x_{k} \right\rangle  - \frac{s}{2} \left\| \nabla f(x_{k + 1})\right\|^{2} \\
                                                             &  \leq  - \frac{\sqrt{\mu s}}{4}  \mathcal{E}(k + 1).             
\end{align*}
Hence, the proof is complete.
\end{enumerate}

\subsection{Proof of Theorem~\ref{thm:heavy-ball-rate-fixed-s}}
\label{subsec: proof_polyak}

Here, we first describe and prove Theorem~\ref{thm:heavy-ball-rate-general-s} below. Then we complete the proof of Theorem~\ref{thm:heavy-ball-rate-fixed-s} by viewing it as a special case of Theorem~\ref{thm:heavy-ball-rate-general-s}.

\begin{thm}[Discretization of heavy-ball ODE --- General]\label{thm:heavy-ball-rate-general-s}
For any $f \in \mathcal{S}_{\mu, L}^{1}(\mathbb{R}^n)$, the following conclusions hold:
\begin{enumerate}
\item[(a)]
Taking $0 < s \leq \mu/(16L^2)$, the symplectic Euler scheme satisfies
\begin{align}
\label{eqn: conver_high_hb_sym}
f(x_k) - f(x^\star) \leq \left(  \frac{ \left(3 +  8\sqrt{\mu s} + 8 \mu s\right) sL}{(1 + \sqrt{\mu s})^{2}} + \frac{2\mu}{L} + \frac{1 + \sqrt{\mu s}}{2} \right)\frac{L \left\| x_0 - x^\star \right\|^{2} }{ \left(1 + \frac{\sqrt{\mu s}}{4} \right)^{k}}.                                       \end{align}

\item[(b)]
Taking $0 < s \leq \mu/(36L^2)$, the explicit Euler scheme satisfies
\begin{multline}
\label{eqn: conver_high_hb_ex}
f(x_k) - f(x^\star) \\ \leq  \left(  \frac{3sL}{(1 + \sqrt{\mu s})^{2}} + \frac{2\mu}{L} + \frac{1 + \sqrt{\mu s}}{2} \right)L \left\| x_0 - x^\star \right\|^{2}  \left(1 - \frac{\sqrt{\mu s}}{8} \right)^{k}.                                 
\end{multline}

\item[(c)] Taking $0 < s \leq 1/L $, the implicit Euler scheme satisfies
\begin{align}\label{eqn: conver_high_hb_im}
f(x_k) - f(x^\star) \leq  \left(  \frac{3sL}{(1 + \sqrt{\mu s})^{2}} + \frac{2\mu}{L} + \frac{1 + \sqrt{\mu s}}{2} \right) \frac{ L \left\| x_0 - x^\star \right\|^{2} }{\left(1 + \frac{\sqrt{\mu s}}{4} \right)^{k} }.
\end{align}
  
\end{enumerate}

\end{thm}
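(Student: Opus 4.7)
My plan is to adapt, scheme by scheme, the Lyapunov-function strategy already used for NAG-\texttt{SC} in the proof of Theorem~\ref{thm: high_nag-sc-three}, but with a \emph{simpler} Lyapunov function reflecting the absence of the gradient-correction term $\sqrt{s}\nabla^{2}f(X)\dot X$ in the heavy-ball ODE~\eqref{eqn: high_hb}. Concretely, for all three schemes I propose
\[
\mathcal{E}(k) \;=\; \tfrac{1}{4}\|v_k\|^{2} + (1+\sqrt{\mu s})\bigl(f(x_k) - f(x^\star)\bigr) + \tfrac{1}{4}\bigl\|2\sqrt{\mu}(x_{\ast} - x^\star) + v_k + \sqrt{s}\nabla f(x_k)\bigr\|^{2},
\]
where $x_{\ast}=x_{k+1}$ for the symplectic scheme and $x_{\ast}=x_k$ for the explicit and implicit ones, so that the ``mixed'' $\langle v_{k+1}-v_k,\, 2\sqrt{\mu}(x_{k+1}-x^\star)\rangle$-type cross terms produced by the update are pointed at by the right-hand argument of the square. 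Compared with the NAG-\texttt{SC} Lyapunov function~\eqref{eqn: Lypunov_symplectic_sc}, the term $-\tfrac{(1+\sqrt{\mu s})^{2}}{1+2\sqrt{\mu s}}\cdot\tfrac{s}{2}\|\nabla f(x_k)\|^{2}$ is dropped, since there is no gradient difference to absorb it.

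For each case I will (i)~compute $\mathcal{E}(k+1)-\mathcal{E}(k)$ by plugging in the corresponding update rule and expanding the square; (ii)~apply $L$-smoothness and $\mu$-strong convexity together with $\langle\nabla f(y)-\nabla f(x),y-x\rangle\ge \tfrac1L\|\nabla f(y)-\nabla f(x)\|^{2}$ and $\|\nabla f(x)\|^{2}\le L\langle\nabla f(x),x-x^\star\rangle$ to reduce the difference to a sum of manifestly nonpositive quadratic forms in $v$, $\nabla f(x)$, and $f(x)-f(x^\star)$; and (iii)~match the negative leading term against $\mathcal{E}(k+1)$ or $\mathcal{E}(k)$, which then yields the claimed contraction. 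In part (a) the update evaluates friction at $v_{k+1}$, producing $-\sqrt{\mu s}\|v_{k+1}\|^{2}$; combined with $\mu$-strong convexity on $\langle\nabla f(x_{k+1}),x_{k+1}-x^\star\rangle$, this gives a one-step drop of order $-\tfrac{\sqrt{\mu s}}{4}\mathcal{E}(k+1)$ provided the $O(sL)$ residual terms are dominated, which forces the tiny step $s\le \mu/(16L^{2})$. Part (b) is similar but the explicit evaluation at $v_k$ and $\nabla f(x_k)$ forces me to shift one $\mathcal{E}(k+1)$ into $\mathcal{E}(k)$, losing a factor two in the contraction and requiring the smaller step $s\le\mu/(36L^{2})$. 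Part (c) is the cleanest: every term on the right-hand side is evaluated at the new iterate, so the cross terms combine directly into $-\tfrac{1}{2}\langle\nabla f(x_{k+1})-\nabla f(x_k),x_{k+1}-x_k\rangle\le 0$ and $-\tfrac{s}{2}\|\nabla f(x_{k+1})\|^{2}\le 0$, and any $s\le 1/L$ works with a contraction of $(1+\sqrt{\mu s}/4)^{-1}$.

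To convert the Lyapunov contraction into the stated bound on $f(x_k)-f(x^\star)$, I will bound $\mathcal{E}(0)$ using the initial condition $v_0=-\tfrac{2\sqrt{s}\nabla f(x_0)}{1+\sqrt{\mu s}}$. The vector square unfolds via Cauchy--Schwarz into a combination of $\mu\|x_0-x^\star\|^{2}$, $\|v_0\|^{2}$ and $s\|\nabla f(x_0)\|^{2}$; then I apply $\|\nabla f(x_0)\|^{2}\le 2L(f(x_0)-f(x^\star))\le L^{2}\|x_0-x^\star\|^{2}$ and $f(x_0)-f(x^\star)\le \tfrac{L}{2}\|x_0-x^\star\|^{2}$ to collapse everything into a multiple of $L\|x_0-x^\star\|^{2}$. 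Tracking the constants yields exactly the prefactors $\frac{(3+8\sqrt{\mu s}+8\mu s)sL}{(1+\sqrt{\mu s})^{2}}+\frac{2\mu}{L}+\frac{1+\sqrt{\mu s}}{2}$ in (a) and $\frac{3sL}{(1+\sqrt{\mu s})^{2}}+\frac{2\mu}{L}+\frac{1+\sqrt{\mu s}}{2}$ in (b) and (c); the difference between (a) and (b)--(c) comes from picking $x_\ast=x_{k+1}$ versus $x_k$ in the Lyapunov function.

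The main obstacle, as in the NAG-\texttt{SC} analysis, is the symplectic case (a). Without the gradient-correction update, the ``good'' term $-\tfrac{1}{2L}\|\nabla f(x_{k+1})-\nabla f(x_k)\|^{2}$ that was key to Theorem~\ref{thm: high_nag-sc-three}(a) is simply absent, so the residual positive term of order $sL\|v_{k+1}\|^{2}+s\|\nabla f\|^{2}$ in $\mathcal{E}(k+1)-\mathcal{E}(k)$ must be absorbed purely by the friction $-\sqrt{\mu s}\|v_{k+1}\|^{2}$ and the convexity-generated $-\sqrt{\mu s}(1+\sqrt{\mu s})\langle\nabla f(x_{k+1}),x_{k+1}-x^\star\rangle$. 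Balancing these forces $sL=O(\sqrt{\mu s})$, i.e., $s=O(\mu/L^{2})$, and fundamentally caps the contraction at the non-accelerated rate $(1+\sqrt{\mu s}/4)^{-1}=(1+\Theta(\mu/L))^{-1}$. Establishing this tight tradeoff carefully, and verifying the exact coefficients $\mu/(16L^{2})$ and $\sqrt{\mu s}/4$, is where the bulk of the algebra sits.
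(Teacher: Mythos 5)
Your Lyapunov function is not the one that actually works here, and the choices you made are essentially the reverse of what the paper needs. The paper's Lyapunov function for all three heavy-ball schemes is
\[
\mathcal{E}(k) = \tfrac{1}{4}\|v_k\|^{2} + (1+\sqrt{\mu s})\bigl(f(x_k)-f(x^\star)\bigr) + \tfrac{1}{4}\bigl\|2\sqrt{\mu}(x_{\ast}-x^\star)+v_k\bigr\|^{2},
\]
with \emph{no} $\sqrt{s}\nabla f(x_k)$ inside the square, and with $x_\ast = x_k$ for the symplectic and explicit schemes and $x_\ast = x_{k+1}$ for the implicit scheme --- the opposite of your assignment. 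The $\sqrt{s}\nabla f(x_k)$ term in~\eqref{eqn: Lypunov_symplectic_sc} is there \emph{specifically} because the NAG-\texttt{SC} update contains $-\sqrt{s}\bigl(\nabla f(x_{k+1})-\nabla f(x_k)\bigr)$, which telescopes against the $\sqrt{s}\nabla f(x_{k+1})-\sqrt{s}\nabla f(x_k)$ produced when you difference the squared norm. The heavy-ball schemes lack that update term, so retaining $\sqrt{s}\nabla f(x_k)$ leaves an uncancelled $-\sqrt{s}\nabla f(x_k)$ in $A_{k+1}-A_k$ (for your implicit Lyapunov function, $A_{k+1}-A_k = -\sqrt{\mu}\,s\,\nabla f(x_{k+1})-\sqrt{s}\nabla f(x_k)$), which pairs the \emph{old} gradient with \emph{new} quantities $v_{k+1}$ and $x_{k+1}-x^\star$ and cannot be absorbed by the friction alone. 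Your claim that in (c) ``every term \ldots is evaluated at the new iterate'' is therefore false once $\sqrt{s}\nabla f(x_k)$ is present. The correct move --- and the paper's --- is to drop the $\sqrt{s}\nabla f(x_k)$ term entirely (not merely the $-\tfrac{(1+\sqrt{\mu s})^2}{1+2\sqrt{\mu s}}\cdot\tfrac{s}{2}\|\nabla f(x_k)\|^2$ correction), which is exactly the structural reflection of the missing gradient correction you started out by noting.

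This also makes your claim about ``tracking the constants'' incompatible with the stated bounds: the $\tfrac{2\mu}{L}$ in the prefactors of~\eqref{eqn: conver_high_hb_sym}--\eqref{eqn: conver_high_hb_im} arises from the two-term Cauchy--Schwarz $\|2\sqrt{\mu}(x_\ast-x^\star)+v_k\|^2 \le 2\bigl(4\mu\|x_\ast-x^\star\|^2+\|v_k\|^2\bigr)$. With your three-term square you would need the factor~$3$ Cauchy--Schwarz and obtain $\tfrac{3\mu}{L}$ plus an additional $\tfrac{3s}{4}\|\nabla f(x_0)\|^2$ contribution, so the prefactors you claim to reproduce do not actually come out of your $\mathcal{E}(0)$. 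In short: you correctly identified that the Lyapunov function must be simplified relative to NAG-\texttt{SC}, but you removed the wrong term. Drop $\sqrt{s}\nabla f(x_k)$ from the squared norm (and use $x_k$ for (a) and (b), $x_{k+1}$ for (c)), and the one-step contraction and initial-condition bookkeeping both fall into place.
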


\paragraph{Proof of Theorem~\ref{thm:heavy-ball-rate-general-s}}
\begin{enumerate}

\item[(a)] The Lyapunov function is 
\[
\mathcal{E}(k) = \frac{1}{4} \left\| v_{k} \right\|^{2} + \frac{1}{4} \left\| 2 \sqrt{\mu} (x_{k} - x^\star) + v_{k} \right\|^{2} + (1 + \sqrt{\mu s}) \left( f(x_k) - f(x^\star) \right).
\]
With the Cauchy-Schwartz inequality
\[
\left\| 2 \sqrt{\mu} (x_k - x^\star) + v_{k} \right\|^{2} \leq 2 \left( 4 \mu \left\| x_k - x^\star \right\|^{2} + \left\| v_k \right\|^{2} \right),
\]
and the basic inequality for $f \in \mathcal{S}_{\mu, L}^{1}(\mathbb{R}^{n})$
\[
\left\{ \begin{aligned}
          & f(x_{k + 1}) - f(x_{k}) \leq \left\langle \nabla f(x_{k + 1}), x_{k + 1} - x_{k} \right\rangle \\
          & f(x^\star) \geq f(x_{k + 1}) + \left\langle \nabla f(x_{k + 1}), x^\star - x_{k + 1} \right\rangle + \frac{\mu}{2} \left\| x_{k + 1} - x^\star\right\|^{2},
         \end{aligned} \right. 
\]
then we can calculate the iterative difference
\begin{align*}
\lefteqn{\mathcal{E}(k + 1) - \mathcal{E}(k)} \\
 & =                \frac{1}{4} \left\langle v_{k + 1} - v_{k}, v_{k + 1} + v_{k} \right\rangle + (1 + \sqrt{\mu s}) \left( f(x_{k + 1}) - f(x_{k}) \right) \\
& \mathrel{ \phantom{=} }   +   \frac{1}{4} \left\langle 2\sqrt{\mu}(x_{k + 1} - x_{k}) + v_{k + 1} - v_{k},  2\sqrt{\mu}(x_{k + 1} + x_{k} - 2x^\star) + v_{k + 1} + v_{k} \right\rangle \\
& \leq           \frac{1}{2} \left\langle v_{k + 1} - v_{k}, v_{k + 1} \right\rangle + (1 + \sqrt{\mu s})  \left\langle \nabla f(x_{k + 1}), x_{k + 1} - x_{k} \right\rangle \\
& \mathrel{ \phantom{\leq} }    +   \frac{1}{2} \left\langle 2\sqrt{\mu}(x_{k + 1} - x_{k}) + v_{k + 1} - v_{k}, 2 \sqrt{\mu} (x_{k + 1} - x^\star) + v_{k + 1} \right\rangle \\
& \mathrel{ \phantom{\leq} }     -   \frac{1}{4} \left\| v_{k + 1} - v_{k} \right\|^{2} - \frac{1}{4} \left\| 2 \sqrt{\mu} (x_{k + 1} - x_{k}) + v_{k + 1} - v_{k} \right\|^{2} \\
& \leq       -   \sqrt{\mu s} \left[ \left\| v_{k + 1} \right\|^{2} + (1 + \sqrt{\mu s}) \left\langle \nabla f(x_{k + 1}), x_{k + 1} - x^\star \right\rangle \right] \\
& \leq       -   \sqrt{\mu s} \left[ \left\| v_{k + 1} \right\|^{2} + (1 + \sqrt{\mu s}) \left( f(x) - f(x^\star) \right) + \frac{\mu}{2} \left\| x_{k + 1} - x^\star \right\|^{2} \right] \\
 & \leq       -   \frac{\sqrt{\mu s}}{4} \mathcal{E}(k + 1).
\end{align*}
Hence, the proof is complete.


\item [(b)]
The Lyapunov function is 
\[
\mathcal{E}(k) = \frac{1}{4} \left\| v_k \right\|^{2} + \frac{1}{4} \left\| 2 \sqrt{\mu} (x_k - x^\star) + v_k \right\|^{2} + \left( 1 + \sqrt{\mu s} \right) \left( f(x_k) - f(x^\star) \right).
\]
With the Cauchy-Schwartz inequality
\[
\left\| 2 \sqrt{\mu} (x_k - x^\star) + v_k \right\|^{2} \leq 2 \left( 4 \mu \left\| x_k - x^\star \right\|^{2} + \left\| v_k \right\|^{2} \right),
\]
and the basic inequality for $f \in \mathcal{S}_{\mu, L}^{1}(\mathbb{R}^n)$
\[
\left\{\begin{aligned} 
         & f(x_{k + 1}) - f(x_{k}) \leq \left\langle \nabla f(x_{k}), x_{k + 1} - x_{k}  \right\rangle + \frac{L}{2} \left\| x_{k + 1} - x_{k} \right\|^{2}  \\
         & f(x^\star) \geq f(x_{k + 1}) + \left\langle \nabla f(x_{k + 1}), x^\star - x_{k + 1} \right\rangle + \frac{\mu}{2} \left\| x^\star - x_{k + 1} \right\|^{2},
        \end{aligned} \right.
\]
then we calculate the iterative difference
\begin{align*}
& \lefteqn{\mathcal{E}(k + 1) - \mathcal{E}(k)} \\
   & =            \frac{1}{4} \left\langle v_{k + 1} - v_{k}, v_{k + 1} + v_{k} \right\rangle + \left( 1 + \sqrt{\mu s} \right) \left( f(x_{k + 1}) - f(x_{k}) \right) \\
   &\mathrel{ \phantom{=} }   + \frac{1}{4} \left\langle 2 \sqrt{\mu} (x_{k + 1} - x_{k}) + v_{k + 1} - v_{k}, 2 \sqrt{\mu} (x_{k + 1} + x_{k} - 2x^\star) + v_{k + 1} + v_{k} \right\rangle \\
    & \leq        \frac{1}{2} \left\langle v_{k + 1} - v_{k}, v_{k} \right\rangle + \left( 1 + \sqrt{\mu s} \right) \left\langle \nabla f(x_{k}), x_{k + 1} - x_{k} \right\rangle \\
    & \mathrel{ \phantom{\leq} } + \frac{\left( 1 + \sqrt{\mu s} \right) L}{2} \left\| x_{k + 1} - x_{k} \right\|^{2}  \\
    & \mathrel{ \phantom{\leq} }  + \frac{1}{2} \left\langle 2 \sqrt{\mu} (x_{k + 1} - x_{k}) + v_{k + 1} - v_{k}, 2 \sqrt{\mu} (x_{k} - x^\star) + v_{k} \right\rangle \\
     & \mathrel{ \phantom{\leq} }  + \frac{1}{4} \left\| v_{k + 1} - v_{k} \right\|^{2} + \frac{1}{4} \left\| 2 \sqrt{\mu} (x_{k + 1} - x_{k}) + v_{k + 1} - v_{k} \right\|^{2} \\
     & \leq      - \sqrt{\mu s} \left( \left\| v_{k} \right\|^{2} + \left( 1 + \sqrt{\mu s} \right) \left\langle \nabla f(x_{k}), x_{k} - x^\star \right\rangle \right) \\
     & \mathrel{ \phantom{\leq} }    + \frac{ \left( 1 + \sqrt{\mu s} \right) Ls}{2} \left\| v_{k} \right\|^{2} + \frac{s}{4} \left\| 2\sqrt{\mu} v_{k} + \nabla f(x_{k}) \right\|^{2} + \frac{s}{4} \left\| \nabla f(x_{k}) \right\|^{2} \\
     & \leq      - \frac{\sqrt{\mu s}}{2} \left( \left\| v_{k } \right\|^{2} + f(x_{k }) - f(x^\star) + \frac{\mu}{2}\left\| x_{k} - x^\star \right\|^{2}  \right) \\  
     & \mathrel{ \phantom{\leq} }    - \frac{\sqrt{\mu s}}{2} \left( \left\| v_{k} \right\|^{2} +  \frac{\left( 1 + \sqrt{\mu s} \right)}{L}\left\| \nabla f(x_{k}) \right\|^2 \right) \\
     & \mathrel{ \phantom{\leq} }    + s \left( 2 \mu + \frac{L \left( 1 + \sqrt{\mu s} \right)}{2} \right) \left\| v_k \right\|^{2} + \frac{3s}{4} \left\| \nabla f(x_{k}) \right\|^{2}.                                                  
\end{align*}
Since $\mu \leq L$, then the step size $s \leq \mu/ (36L^{2})$ satisfies it. Hence, the proof is complete.


\item[(c)]
The Lyapunov function is constructed as
\[
\mathcal{E}(k) = \frac{1}{4} \left\| v_k \right\|^{2} + \frac{1}{4} \left\| 2 \sqrt{\mu} (x_{k + 1} - x^\star) + v_k \right\|^{2} + \left( 1 + \sqrt{\mu s} \right) \left( f(x_k) - f(x^\star) \right).
\]
With Cauchy-Schwartz inequality
\begin{align*}
\left\| 2 \sqrt{\mu} (x_{k + 1} - x^\star) + v_k \right\|^{2}   &   = \left\| 2 \sqrt{\mu} (x_k - x^\star) + (1 + 2 \sqrt{\mu s}) v_k \right\|^2 \\
                                                                                         &  \leq 2 \left( 4 \mu \left\| x_k - x^\star \right\|^{2} +  (1 + 2 \sqrt{\mu s})^2 \left\| v_k \right\|^{2} \right),
\end{align*}
and the basic inequality for $f \in \mathcal{S}_{\mu, L}^{1}(\mathbb{R}^n)$
\[
\left\{\begin{aligned} 
         & f(x_{k + 1}) - f(x_{k}) \leq \left\langle \nabla f(x_{k + 1}), x_{k + 1} - x_{k}  \right\rangle  - \frac{1}{2L} \left\| \nabla f(x_{k + 1}) - \nabla f(x_{k}) \right\|^{2} \\
         & f(x^\star) \geq f(x_{k + 1}) + \left\langle \nabla f(x_{k + 1}), x^\star - x_{k + 1} \right\rangle + \frac{\mu}{2} \left\| x^\star - x_{k + 1} \right\|^{2},
        \end{aligned} \right.
\]
then we calculate the iterative difference
\begin{align*}
\lefteqn{\mathcal{E}(k + 1) - \mathcal{E}(k)}   \\
 & =            \frac{1}{4} \left\langle v_{k + 1} - v_{k}, v_{k + 1} + v_{k} \right\rangle + \left( 1 + \sqrt{\mu s} \right) \left( f(x_{k + 1}) - f(x_{k}) \right) \\
 &  \mathrel{ \phantom{=} }  + \frac{1}{4} \left\langle 2 \sqrt{\mu} (x_{k + 2} - x_{k + 1}) + v_{k + 1} - v_{k}, 2 \sqrt{\mu} (x_{k + 2} + x_{k + 1} - 2x^\star) + v_{k + 1} + v_{k} \right\rangle \\
 &  \leq        \frac{1}{2} \left\langle v_{k + 1} - v_{k}, v_{k + 1} \right\rangle  - \frac{1}{4} \left\| v_{k + 1} - v_{k} \right\|^{2}  \\
 & \mathrel{ \phantom{\leq} }     + \left( 1 + \sqrt{\mu s} \right) \left\langle \nabla f(x_{k + 1}), x_{k + 1} - x_{k} \right\rangle  - \frac{\left( 1 + \sqrt{\mu s} \right)}{2L} \left\| \nabla f(x_{k + 1}) - \nabla f(x_{k}) \right\|^{2}  \\
& \mathrel{ \phantom{\leq} } + \frac{1}{2} \left\langle - \sqrt{s} \left( 1 + \sqrt{\mu s} \right) \nabla f(x_{k + 1}), 2 \sqrt{\mu} (x_{k + 2} - x^\star) + v_{k + 1} \right\rangle \\
& \mathrel{ \phantom{\leq} } - \frac{1}{4} \left\| \sqrt{s} \left( 1 + \sqrt{\mu s} \right) \nabla f(x_{k + 1}) \right\|^{2} \\
& \leq      - \sqrt{\mu s} \left( \left\| v_{k + 1} \right\|^{2} +  \left( 1 + \sqrt{\mu s} \right) \left\langle \nabla f(x_{k + 1}), x_{k + 1} - x^\star \right\rangle \right) \\
& \mathrel{ \phantom{\leq} }  - \frac{\sqrt{s}  \left( 1 + \sqrt{\mu s} \right) }{2} \left\langle \nabla f(x_{k + 1}), (1 + 2\sqrt{\mu s})v_{k + 1} - v_{k} \right\rangle  \\
& \mathrel{ \phantom{\leq} }    -  \frac{\left( 1 + \sqrt{\mu s} \right)}{2L} \left\| \nabla f(x_{k + 1}) - \nabla f(x_{k}) \right\|^{2}  - \frac{1}{4} \left\| v_{k + 1} - v_{k} + \sqrt{s} \nabla f(x_{k + 1}) \right\|^2    \\
& \leq      -  \sqrt{\mu s} \left[ \left\| v_{k + 1} \right\|^{2} + \frac{1}{4}  \left( 1 + \sqrt{\mu s} \right) \left( f(x_{k + 1}) - f(x^\star) \right) + \frac{\mu}{2} \left\| x_{k + 1} - x^\star \right\|^{2}  \right] \\
& \mathrel{ \phantom{\leq} }    - \frac{ \left( 1 + \sqrt{\mu s} \right)}{4} \left[ 3\sqrt{\mu s} \left( f(x_{k + 1}) - f(x^\star) \right) - 2 s \left\| \nabla f(x_{k + 1}) \right\|^{2} \right]                                          
\end{align*}
Since $\mu \leq L$, then the step size $s \leq \mu/ (16L^{2})$ satisfies it. Hence, the proof is complete with some basic calculations.

\end{enumerate}

\section{Technical Analysis and Proofs for Section~\ref{sec:high-resolution-ode}}
\label{sec: high_nag-c}
\subsection{Technical details for numerical scheme of ODE~\eqref{eqn: nag-c_phase}}
\label{subsec: phase-space_numercial}

\paragraph{Proof of Theorem~\ref{thm:nag-c-all-phase-space} (b)}
The Lyapunov function is constructed as
\begin{multline*}
\mathcal{E}(k) = s\left( k + 2 \right) \left( k + 3 \right) \left( f(x_{k}) - f(x^\star) \right) + \frac{1}{2} \left\| 2 (x_{k} - x^\star) + (k + 1) \sqrt{s} \left( v_{k} + \sqrt{s} \nabla f(x_{k}) \right) \right\|^{2}.
\end{multline*}
With the basic inequality for $f \in \mathcal{F}_{L}^{1}(\mathbb{R}^n)$ 
\[
\left\{ \begin{aligned}
         & f(x_{k + 1}) - f(x_{k})   \leq \left\langle \nabla f(x_{k + 1}), x_{k + 1} - x_{k} \right\rangle \\
         & f(x_{k + 1}) - f(x^\star) \leq \left\langle \nabla f(x_{k + 1}), x_{k + 1} - x^\star \right\rangle,  
         \end{aligned}\right.
\]
we can calculate the iterative difference as
\begin{align*}
         &  \lefteqn{\mathcal{E}(k + 1) - \mathcal{E}(k)} \\
       & = s\left( k + 2\right) \left( k + 3 \right) \left( f(x_{k + 1}) - f(x_{k}) \right) + s \left( 2k + 6 \right) \left( f(x_{k + 1}) - f(x^\star) \right) \\
         &\mathrel{ \phantom{=} }   \begin{aligned} 
         + \frac{1}{2}\left\langle 2(x_{k + 1} - x_{k})\right. & - \sqrt{s}(k + 1) \left( v_{k} + \sqrt{s} \nabla f(x_{k}) \right) \\ & + \sqrt{s}(k + 2) \left( v_{k + 1} + \sqrt{s} \nabla f(x_{k + 1})\right) ,   
            \end{aligned}    \\    
              & \mathrel{ \phantom{=+\frac{1}{2}\langle}}   \begin{aligned} 2(x_{k + 1} + x_{k} - 2x^\star) & + \sqrt{s}(k + 1) \left( v_{k} + \sqrt{s} \nabla f(x_{k}) \right)\\
               &+\left.  \sqrt{s}(k + 2) \left( v_{k + 1} + \sqrt{s} \nabla f(x_{k + 1})\right) \right\rangle 
         \end{aligned}
         \\
         &=  s \left( k + 2 \right) \left( k + 3 \right) \left( f(x_{k + 1}) - f(x_{k}) \right) + s\left( 2k + 6 \right) \left( f(x_{k + 1}) - f(x^\star) \right) \\
         &\mathrel{ \phantom{=} } \begin{aligned} 
                                                  - \langle & s \left( k + 3\right) \nabla f(x_{k + 1}), \\
                                                                & 2 (x_{k + 1} - x^\star) + \sqrt{s} (k + 2) \left( v_{k + 1} + \left.\sqrt{s} \nabla f(x_{k + 1}) \right)\right\rangle  
                                                  \end{aligned} \\
         & \mathrel{ \phantom{=} }  - \frac{s^{2}}{2} \left( k + 3 \right)^{2}  \left\| \nabla f(x_{k + 1}) \right\|^{2} \\
         & \leq - \frac{s^{2}}{2} \left( k + 3 \right) \left( 3k + 7 \right) \left\| \nabla f(x_{k + 1}) \right\|^{2}.        
\end{align*}

Hence, the proof is complete with some basic calculations.

\paragraph{Technical analysis of explicit Euler of ODE~\eqref{eqn: nag-c_phase}}

The Lyapunov function is 
\[
\mathcal{E}(k) = s\left( k - 2 \right) \left( k +1 \right) \left( f(x_{k}) - f(x^\star) \right) + \frac{1}{2} \left\| 2 (x_{k} - x^\star) + (k - 1) \sqrt{s} \left( v_{k} + \sqrt{s} \nabla f(x_{k}) \right) \right\|^{2}.
\]
Then we calculate the iterative difference as
\begin{align*}
       & \lefteqn{ \mathcal{E}(k + 1) - \mathcal{E}(k) } \\
       & = s \left( k - 1 \right) \left( k + 2 \right) \left( f(x_{k + 1}) - f(x_{k}) \right) +  2sk \left( f(x_{k }) - f(x^\star) \right) \\
       & \mathrel{ \phantom{=} } 
       \begin{aligned} + \frac{1}{2}\langle 2(x_{k + 1} - x_{k})  & + \sqrt{s}k \left( v_{k + 1} + \sqrt{s} \nabla f(x_{k + 1})\right) \\ 
                                                                                              &- \sqrt{s}(k - 1) \left( v_{k} + \sqrt{s} \nabla f(x_{k}) \right),
       \end{aligned} \\
       &\mathrel{ \phantom{=}}\mathrel{ \phantom{+\frac{1}{2}\langle} } 
       \begin{aligned} 2(x_{k + 1} + x_{k} - 2x^\star) &+ \sqrt{s}k \left( v_{k + 1} + \sqrt{s} \nabla f(x_{k + 1})\right) \\& + \left.\sqrt{s}(k - 1) \left( v_{k} + \sqrt{s} \nabla f(x_{k}) \right) \right\rangle
       \end{aligned} \\
       & = s \left( k - 1 \right) \left( k + 2 \right) \left( f(x_{k+1}) - f(x_{k}) \right) + 2sk \left( f(x_{k }) - f(x^\star) \right) \\
       & \mathrel{ \phantom{=} } - \left\langle s \left( k + 2 \right) \nabla f(x_{k }), 2 (x_{k} - x^\star) + \sqrt{s} (k - 1) \left( v_{k} + \sqrt{s} \nabla f(x_{k }) \right) \right\rangle   \\
       & \mathrel{ \phantom{=} }  + \frac{s^{2}}{2} \left( k + 2 \right)^{2}  \left\| \nabla f(x_{k}) \right\|^{2}. 
\end{align*}         
\begin{itemize}
\item If we take the following basic inequality for $f \in \mathcal{F}_{L}^{1}(\mathbb{R}^{n})$
         \[
         \left\{ \begin{aligned}
                  & f(x_{k + 1}) - f(x_{k}) \leq \left\langle \nabla f(x_{k}), x_{k + 1} - x_{k} \right\rangle + \frac{L}{2} \left\| x_{k + 1} - x_{k} \right\|^{2} \\
                  & f(x_{k}) - f(x^\star) \leq \left\langle \nabla f(x_{k}), x_{k} - x^\star \right\rangle,
                 \end{aligned} \right.
         \]
         we can obtain the following estimate
         \begin{multline*}
         \mathcal{E}(k + 1) - \mathcal{E}(k) \\ \leq \frac{Ls^{2}}{2} \left( k - 1 \right)\left( k + 2 \right) \left\| v_{k} \right\|^{2} - 4s \left( f(x_{k}) - f(x^\star) \right) - \frac{s^{2}}{2} \left( k + 2 \right) \left( k - 4 \right) \left\| \nabla f(x_{k}) \right\|^{2},
         \end{multline*}
         which cannot guarantee the right-hand side of the inequality non-positive.
\item If we take the following basic inequality for $f \in \mathcal{F}_{L}^{1}(\mathbb{R}^{n})$
         \[
         \left\{ \begin{aligned}
                  & f(x_{k + 1}) - f(x_{k}) \leq \left\langle \nabla f(x_{k + 1}), x_{k + 1} - x_{k} \right\rangle - \frac{1}{2L} \left\| \nabla f(x_{k + 1}) - \nabla f(x_{k}) \right\|^{2} \\
                  & f(x_{k}) - f(x^\star) \leq \left\langle \nabla f(x_{k}), x_{k} - x^\star \right\rangle,
                 \end{aligned} \right.
         \]
         we can obtain the following estimate
         \begin{align*}
                &\lefteqn{ \mathcal{E}(k + 1) - \mathcal{E}(k)}            \\  
         &   \begin{aligned} \leq \left.\frac{Ls\left( k - 1 \right)\left( k + 2 \right)}{2}  \right( &\left\langle \nabla f(x_{k + 1}) - \nabla f(x_{k}), x_{k + 1} - x_{k} \right\rangle \\ & \quad - \left.\frac{1}{2L} \left\| \nabla f(x_{k + 1}) - \nabla f(x_{k}) \right\|^{2} t \right) 
         \end{aligned} \\
                &  \mathrel{ \phantom{\leq} }  - 4s \left( f(x_{k}) - f(x^\star) \right) - \frac{s^{2}}{2} \left( k + 2 \right) \left( k - 4 \right)  \left\| \nabla f(x_{k}) \right\|^{2},
         \end{align*}
         which cannot guarantee the right-hand side of the inequality non-positive.
\end{itemize}

\subsection{Technical details for standard numerical schemes}
\label{subsec: standard_numerical}

Standard Euler discretization of ODE~\eqref{eqn: low_nag-c_phase}, with initial $x _{0}$ and $v_{0} = - \sqrt{s} \nabla f(x_{0})$, are shown as below.
\textbf{Euler scheme of~\eqref{eqn: low_nag-c_phase}: (S), (E) and (I) respectively}
 \begin{align*}
&\textbf{(S)} && \left\{ \begin{aligned}
           & x_{k + 1} - x_{k} = \sqrt{s} v_{k}                                                          \\
           & v_{k + 1} - v_{k} = - \frac{3 v_{k + 1}}{k + 1} - \sqrt{s} \left( \nabla f(x_{k + 1}) - \nabla f(x_{k}) \right)- \sqrt{s}\left( \frac{2 k + 5}{2  k + 2}\right) \nabla f(x_{k + 1}).  
          \end{aligned} \right. \\
&\textbf{(E)} && \left\{ \begin{aligned}
           & x_{k + 1} - x_{k} = \sqrt{s} v_{k}                                                          \\
           & v_{k + 1} - v_{k} = - \frac{3v_{k}}{k} - \sqrt{s} \left( \nabla f(x_{k + 1}) - \nabla f(x_{k}) \right)- \sqrt{s}\left( \frac{2k + 3}{2k}\right) \nabla f(x_{k}). 
          \end{aligned} \right. \\
&\textbf{(I)} && \left\{ \begin{aligned}
           & x_{k + 1} - x_{k} = \sqrt{s} v_{k + 1}                                                          \\
           & v_{k + 1} - v_{k} = - \frac{3v_{k + 1}}{k + 1} - \sqrt{s} \left( \nabla f(x_{k + 1}) - \nabla f(x_{k}) \right)- \sqrt{s}\left(\frac{2 k + 5}{2 k + 2}\right) \nabla f(x_{k + 1}).  
          \end{aligned} \right. 
\end{align*}

\paragraph{Technical analysis of symplectic scheme of ODE~\eqref{eqn: low_nag-c_phase}}

The Lyapunov function is 
\[
\mathcal{E}(k) = s\left( k + 1\right) \left( k + \frac{3}{2} \right) \left( f(x_{k}) - f(x^\star) \right) + \frac{1}{2} \left\| 2 (x_{k + 1} - x^\star) + (k + 1) \sqrt{s} \left( v_{k} + \sqrt{s} \nabla f(x_{k}) \right) \right\|^{2}.
\]
Then we calculate the iterative difference as
\begin{align*}
          \lefteqn{ \mathcal{E}(k + 1) - \mathcal{E}(k)} \\
 & =        s\left( k + 1\right) \left( k + \frac{3}{2} \right) \left( f(x_{k + 1}) - f(x_{k}) \right) + s \left( 2k + \frac{7}{2} \right) \left( f(x_{k + 1}) - f(x^\star) \right) \\
         & \mathrel{ \phantom{=} } \begin{aligned} + \frac{1}{2}\left\langle 2(x_{k + 2} - x_{k + 1})  \right. & - \sqrt{s}(k + 1) \left( v_{k} + \sqrt{s} \nabla f(x_{k}) \right) \\
                                                                                                                                                               & + \sqrt{s}(k + 2) \left( v_{k + 1} + \sqrt{s} \nabla f(x_{k + 1}) \right) 
                          \end{aligned} \\
         & \mathrel{ \phantom{=} }  \begin{aligned}\mathrel{ \phantom{ +\frac{1}{2}\langle} } 2(x_{k + 2} + x_{k + 1} - 2x^\star)& +  \sqrt{s}(k + 1) \left( v_{k} + \sqrt{s} \nabla f(x_{k}) \right) \\ &+  \left. \sqrt{s}(k + 2) \left( v_{k + 1} + \sqrt{s} \nabla f(x_{k + 1})\right) \right\rangle   
         \end{aligned} \\
& =       s \left( k + 1 \right) \left( k + \frac{3}{2} \right) \left( f(x_{k + 1}) - f(x_{k}) \right) + s\left( 2k + \frac{7}{2} \right) \left( f(x_{k + 1}) - f(x^\star) \right) \\
         & \mathrel{ \phantom{=} } - \left\langle s \left( k + \frac{3}{2} \right) \nabla f(x_{k + 1}), 2 (x_{k + 2} - x^\star) + \sqrt{s} (k + 2) \left( v_{k + 1} + \sqrt{s} \nabla f(x_{k + 1}) \right) \right\rangle   \\
         & \mathrel{ \phantom{=} } - \frac{s^{2}}{2} \left( k + \frac{3}{2} \right)^{2}  \left\| \nabla f(x_{k + 1}) \right\|^{2}. 
\end{align*}

Now we hope to utilize the basic inequality for $f \in \mathcal{F}_{L}^{1}(\mathbb{R}^n)$ to make the right side of equality no more than zero. Taking the following inequalities
\[
\left\{ \begin{aligned}
         & f(x_{k + 1}) - f(x_{k})   \leq \left\langle \nabla f(x_{k + 1}), x_{k + 1} - x_{k} \right\rangle  -  \frac{1}{2L} \left\| \nabla f(x_{k + 1}) - \nabla f(x_{k}) \right\|^{2} \\
         & f(x_{k + 1}) - f(x^\star) \leq \left\langle \nabla f(x_{k + 1}), x_{k + 1} - x^\star \right\rangle,  
         \end{aligned}\right.
\]
we can obtain the iterative difference is
\begin{align*}
& \lefteqn{ \mathcal{E}(k + 1) - \mathcal{E}(k)} \\
   & \leq     \frac{s}{2}  \left( f(x_{k + 1}) - f(x^\star) \right)  - \frac{s^{2}}{2} \left( k + \frac{3}{2}\right) \left( 3k + \frac{7}{2} \right) \left\| \nabla f(x_{k + 1}) \right\|^{2} \\  
                                                                     & \mathrel{ \phantom{\leq}}  - \frac{s}{2L} \left(k + 1 \right) \left( k + \frac{3}{2} \right) \left\| \nabla f(x_{k + 1}) - \nabla f(x_{k}) \right\|^{2} \\
                                                                     & \mathrel{ \phantom{\leq}}+ s^{2} \left(k + 1 \right) \left( k + \frac{3}{2} \right) \left\langle \nabla f(x_{k + 1}), \nabla f(x_{k + 1}) - \nabla f(x_{k})  \right\rangle \\
                                                                     &\mathrel{ \phantom{\leq}} + s^{2} \left( k + \frac{3}{2} \right) \left( k + \frac{5}{2} \right) \left\| \nabla f(x_{k + 1}) \right\|^{2} \\
                                                            & \leq    \frac{s}{2}  \left( f(x_{k + 1}) - f(x^\star) \right)  - \frac{s^{2}}{2} \left( k + \frac{3}{2} \right) \left( k - \frac{3}{2} - Ls (k + 1) \right) \left\| \nabla f(x_{k + 1}) \right\|^{2}.        
\end{align*}

Since there exists a non-negative term,~$\frac{s}{2} \left( f(x_{k + 1}) - f(x^\star) \right)$, we cannot guarantee the right-hand side of inequality is non-positive. Hence, the convergence cannot be proved by the above description.

\paragraph{Technical analysis of explicit scheme of ODE~\eqref{eqn: low_nag-c_phase}}

The Lyapunov function is 
\[
\mathcal{E}(k) = s\left( k - 2 \right) \left( k - \frac{1}{2} \right) \left( f(x_{k}) - f(x^\star) \right) + \frac{1}{2} \left\| 2 (x_{k} - x^\star) + (k - 1) \sqrt{s} \left( v_{k} + \sqrt{s} \nabla f(x_{k}) \right) \right\|^{2}.
\]
Then we calculate the iterative difference as
\begin{align*}
         &\lefteqn{  \mathcal{E}(k + 1) - \mathcal{E}(k)} \\
         & = s \left( k - 1 \right) \left( k + \frac{1}{2} \right) \left( f(x_{k + 1}) - f(x_{k}) \right) + s \left( 2k - \frac{3}{2} \right) \left( f(x_{k }) - f(x^\star) \right) \\
         &  \mathrel{ \phantom{=} } \begin{aligned}+ \frac{1}{2}\left\langle 2(x_{k + 1} - x_{k}) \right. &+ \sqrt{s}k \left( v_{k + 1} + \sqrt{s} \nabla f(x_{k + 1})\right) \\&- \sqrt{s}(k - 1) \left( v_{k} + \sqrt{s} \nabla f(x_{k}) \right),  \end{aligned} \\
         &  \mathrel{ \phantom{=}}  \begin{aligned} \mathrel{ \phantom{+ \frac{1}{2}\langle }}  2(x_{k + 1} + x_{k} - 2x^\star) &+ \sqrt{s}k \left( v_{k + 1}  + \sqrt{s} \nabla f(x_{k + 1})\right) \\
                                                                                                                                    & + \left.\sqrt{s}(k - 1) \left( v_{k} + \sqrt{s} \nabla f(x_{k}) \right) \right\rangle \end{aligned}\\
       & = s \left( k - 1 \right) \left( k + \frac{1}{2} \right) \left( f(x_{k+1}) - f(x_{k}) \right) + s\left( 2k - \frac{3}{2} \right) \left( f(x_{k }) - f(x^\star) \right) \\
         & \mathrel{ \phantom{=} } - \left\langle s \left( k + \frac{1}{2} \right) \nabla f(x_{k }), 2 (x_{k} - x^\star) + \sqrt{s} (k - 1) \left( v_{k} + \sqrt{s} \nabla f(x_{k }) \right) \right\rangle   \\
         & \mathrel{ \phantom{=} } + \frac{s^{2}}{2} \left( k + \frac{1}{2} \right)^{2}  \left\| \nabla f(x_{k}) \right\|^{2}. 
\end{align*}

\begin{itemize}
\item If we take the following basic inequality for $f \in \mathcal{F}_{L}^{1}(\mathbb{R}^{n})$
         \[
         \left\{ \begin{aligned}
                  & f(x_{k + 1}) - f(x_{k}) \leq \left\langle \nabla f(x_{k}), x_{k + 1} - x_{k} \right\rangle + \frac{L}{2} \left\| x_{k + 1} - x_{k} \right\|^{2} \\
                  & f(x_{k}) - f(x^\star) \leq \left\langle \nabla f(x_{k}), x_{k} - x^\star \right\rangle,
                 \end{aligned} \right.
         \]
         we can obtain the following estimate
         \begin{multline*}
         \mathcal{E}(k + 1) - \mathcal{E}(k) \leq \frac{Ls^{2}}{2} \left( k - 1 \right)\left( k + \frac{1}{2} \right) \left\| v_{k} \right\|^{2} \\ - \frac{5s}{2} \left( f(x_{k}) - f(x^\star) \right) - \frac{s^{2}}{2} \left( k + \frac{1}{2} \right) \left( k - \frac{5}{2} \right) \left\| \nabla f(x_{k}) \right\|^{2},
         \end{multline*}
         which cannot guarantee the right-hand side of the inequality non-positive.
\item If we take the following basic inequality for $f \in \mathcal{F}_{L}^{1}(\mathbb{R}^{n})$
         \[
         \left\{ \begin{aligned}
                  & f(x_{k + 1}) - f(x_{k}) \leq \left\langle \nabla f(x_{k + 1}), x_{k + 1} - x_{k} \right\rangle - \frac{1}{2L} \left\| \nabla f(x_{k + 1}) - \nabla f(x_{k}) \right\|^{2} \\
                  & f(x_{k}) - f(x^\star) \leq \left\langle \nabla f(x_{k}), x_{k} - x^\star \right\rangle,
                 \end{aligned} \right.
         \]
         we can obtain the following estimate
         \begin{align*}
                & \lefteqn{\mathcal{E}(k + 1) - \mathcal{E}(k)}            \\  
                &  \begin{aligned} \leq \left.\frac{Ls(k - 1)(2k + 1)}{4} \right( & \left\langle \nabla f(x_{k + 1}) - \nabla f(x_{k}), x_{k + 1} - x_{k} \right\rangle \\ & - \left.\frac{1}{2L} \left\| \nabla f(x_{k + 1}) - \nabla f(x_{k}) \right\|^{2} \right) \end{aligned}\\
                &\mathrel{ \phantom{\leq} } - \frac{5s}{2} \left( f(x_{k}) - f(x^\star) \right) - \frac{s^{2}}{2} \left( k + \frac{1}{2} \right) \left( k - \frac{5}{2} \right) \left\| \nabla f(x_{k}) \right\|^{2},
         \end{align*}
         which cannot guarantee the right-hand side of the inequality non-positive.
\end{itemize}

\paragraph{Technical analysis of implicit scheme of ODE~\eqref{eqn: low_nag-c_phase}}

The Lyapunov function is 
\begin{multline*}
\mathcal{E}(k) = s\left( k + 2 \right) \left( k + \frac{3}{2} \right) \left( f(x_{k}) - f(x^\star) \right) \\ + \frac{1}{2} \left\| 2 (x_{k} - x^\star) + (k + 1) \sqrt{s} \left( v_{k} + \sqrt{s} \nabla f(x_{k}) \right) \right\|^{2}.
\end{multline*}
Then we can calculate the iterative difference as
\begin{align*}
         &  \lefteqn{\mathcal{E}(k + 1) - \mathcal{E}(k)} \\
         &  =s\left( k + 2\right) \left( k + \frac{3}{2} \right) \left( f(x_{k + 1}) - f(x_{k}) \right) + s \left( 2k + \frac{9}{2} \right) \left( f(x_{k + 1}) - f(x^\star) \right) \\
           & \mathrel{ \phantom{=} } \begin{aligned}+ \frac{1}{2}\left\langle 2(x_{k + 1} - x_{k}) \right.  & -  \sqrt{s}(k + 1) \left( v_{k} + \sqrt{s} \nabla f(x_{k}) \right)  \\& + \sqrt{s}(k + 2) \left( v_{k + 1} + \sqrt{s} \nabla f(x_{k + 1})\right),  \end{aligned}\\
         & \mathrel{ \phantom{=} } \begin{aligned}  \mathrel{ \phantom{+\frac{1}{2}\langle} } 2(x_{k + 1} + x_{k} - 2x^\star) &+  \sqrt{s}(k + 1) \left( v_{k} + \sqrt{s} \nabla f(x_{k}) \right)\\ &+  \left. \sqrt{s}(k + 2) \left( v_{k + 1} + \sqrt{s} \nabla f(x_{k + 1})\right)   \right\rangle \end{aligned} \\
      & = s \left( k + 2 \right) \left( k + \frac{3}{2} \right) \left( f(x_{k + 1}) - f(x_{k}) \right) + s\left( 2k + \frac{9}{2} \right) \left( f(x_{k + 1}) - f(x^\star) \right) \\
         &\mathrel{ \phantom{=} }  - \left\langle s \left( k + \frac{3}{2} \right) \nabla f(x_{k + 1}), 2 (x_{k + 1} - x^\star) + \sqrt{s} (k + 2) \left( v_{k + 1} + \sqrt{s} \nabla f(x_{k + 1}) \right) \right\rangle   \\
         &\mathrel{ \phantom{=} }  - \frac{s^{2}}{2} \left( k + \frac{3}{2} \right)^{2}  \left\| \nabla f(x_{k + 1}) \right\|^{2}. 
\end{align*}

Now we hope to utlize the basic inequality for $f \in \mathcal{F}_{L}^{1}(\mathbb{R}^n)$ to make the right side of equality no more than zero. Taking the following inequalities
\[
\left\{ \begin{aligned}
         & f(x_{k + 1}) - f(x_{k})   \leq \left\langle \nabla f(x_{k + 1}), x_{k + 1} - x_{k} \right\rangle \\
         & f(x_{k + 1}) - f(x^\star) \leq \left\langle \nabla f(x_{k + 1}), x_{k + 1} - x^\star \right\rangle,  
         \end{aligned}\right.
\]
we can obtain 
\[
  \mathcal{E}(k + 1) - \mathcal{E}(k)  \leq     \frac{3s}{2}  \left( f(x_{k + 1}) - f(x^\star) \right)  - \frac{s^{2}}{2} \left( k + \frac{3}{2}\right) \left( 3k + \frac{7}{2} \right) \left\| \nabla f(x_{k + 1}) \right\|^{2}.   
\]

Although the negative term concludes the multiplier $k^{2}$, we cannot guarantee the right-hand side non-positive

\section{Low-Resolution ODEs}
\label{sec: low_res}

\subsection{Low-resolution ODE for strongly convex functions}

\label{subsec: low_sc}

In this subsection, we discuss the numerical discretization of~\eqref{eqn: low_hb_NAG-SC}. 
We rewrite this ODE in a phase-space representation  
\begin{equation}
\label{eqn: low_hb_NAG-SC_phase}
\left\{ \begin{aligned}
          & \dot{X} = V                                               \\
          & \dot{V} = - 2 \sqrt{\mu} V - \nabla f(X)  
         \end{aligned} \right. ,
\end{equation}
with $X(0) = x_0$ and $V(0) = 0$. We have the following theorem:
\begin{thm}
\label{thm: low_con_strongly}
 Let $f \in \mathcal{S}_{\mu, L}^{1}(\mathbb{R}^n)$. The solution $X = X(t)$ to low-resolution ODE~\eqref{eqn: low_hb_NAG-SC} satisfies 
\begin{align}
\label{eqn: conver_sc_low1}
f(X) - f(x^\star) \leq \frac{ 3L \left\| x_0 - x^\star \right\|^2}{2} \ee^{- \frac{\sqrt{\mu} t}{4}}.               \end{align}     
\end{thm}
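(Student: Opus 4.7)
The plan is to construct a Lyapunov function for \eqref{eqn: low_hb_NAG-SC_phase} that mirrors the argument Shi et al.~use for the high-resolution heavy-ball ODE, but stripped of the $(1+\sqrt{\mu s})$ factor that is no longer present. A natural candidate is
\[
\mathcal{E}(t) = \bigl(f(X) - f(x^\star)\bigr) + \tfrac{1}{4}\|V\|^2 + \tfrac{1}{4}\bigl\|2\sqrt{\mu}(X - x^\star) + V\bigr\|^2,
\]
where the weights are chosen so that the cross terms appearing after differentiation will cancel. First I would differentiate $\mathcal{E}$ along the flow and substitute $\dot{V} = -2\sqrt{\mu}V - \nabla f(X)$. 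The term $\langle \nabla f(X), V\rangle$ arising from $\frac{\dd}{\dd t}(f(X)-f(x^\star))$ should cancel against the two $-\tfrac{1}{2}\langle V,\nabla f(X)\rangle$ terms coming from the two squared norms, leaving the clean identity
\[
\dot{\mathcal{E}} = -\sqrt{\mu}\|V\|^2 - \sqrt{\mu}\bigl\langle \nabla f(X), X - x^\star\bigr\rangle.
\]

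Next I would invoke $\mu$-strong convexity in the form $\langle \nabla f(X), X - x^\star\rangle \geq f(X) - f(x^\star) + \tfrac{\mu}{2}\|X - x^\star\|^2$ to rewrite $\dot{\mathcal{E}}$ as a nonpositive combination of the three pieces composing $\mathcal{E}$. Comparing against the elementary bound $\|2\sqrt{\mu}(X - x^\star)+V\|^2 \leq 8\mu\|X-x^\star\|^2 + 2\|V\|^2$ should show that the negative terms dominate $\tfrac{\sqrt{\mu}}{4}\mathcal{E}$ with room to spare, yielding the differential inequality $\dot{\mathcal{E}} \leq -\tfrac{\sqrt{\mu}}{4}\mathcal{E}$. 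Gronwall then gives $\mathcal{E}(t) \leq \mathcal{E}(0)\, \ee^{-\sqrt{\mu} t/4}$.

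To finish, I would translate this into the stated estimate on $f(X)-f(x^\star)$. The initial conditions $X(0)=x_0$, $V(0)=0$ yield $\mathcal{E}(0) = (f(x_0)-f(x^\star)) + \mu\|x_0-x^\star\|^2$, and $L$-smoothness together with $\mu\leq L$ bounds this by $\tfrac{3L}{2}\|x_0-x^\star\|^2$. Since $f(X)-f(x^\star) \leq \mathcal{E}(t)$ by construction, the claim follows.

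The only delicate point is the choice of weights $(1,\tfrac14,\tfrac14)$ in $\mathcal{E}$: it has to be tight enough to make the cross terms vanish exactly, while still leaving negative coefficients large enough to absorb $\tfrac{\sqrt{\mu}}{4}\mathcal{E}$. I expect the contraction rate $\sqrt{\mu}/4$ to be the best this particular Lyapunov template produces, as in the high-resolution analysis one could only push it further by exploiting the additional gradient-correction term $\sqrt{s}\,\nabla^2 f(X)\dot X$, which is absent from \eqref{eqn: low_hb_NAG-SC}.
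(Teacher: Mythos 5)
Your proposal is correct and uses exactly the paper's argument: the same Lyapunov function $\mathcal{E} = f(X)-f(x^\star) + \tfrac14\|\dot X\|^2 + \tfrac14\|2\sqrt{\mu}(X-x^\star)+\dot X\|^2$, the same cancellation yielding $\dot{\mathcal{E}} = -\sqrt{\mu}\|\dot X\|^2 - \sqrt{\mu}\langle\nabla f(X),X-x^\star\rangle$, the same strong-convexity and Cauchy--Schwarz bounds, and the same contraction rate $\sqrt{\mu}/4$. One small imprecision: the coefficient of $\|X-x^\star\|^2$ in the comparison is actually tight (both sides equal $\mu/2$), so it is not quite ``with room to spare,'' but the inequality $\dot{\mathcal{E}}\le -\tfrac{\sqrt{\mu}}{4}\mathcal{E}$ still holds and the rest of the argument goes through as stated.
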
  
 
 \begin{proof}
 The Lyapunov function is 
\[
\mathcal{E} = \frac{1}{4} \| \dot{X} \|^{2} + \frac{1}{4} \| 2 \sqrt{\mu} (X - x^{\star}) + \dot{X} \|^{2} + f(X) - f(x^{\star}).
\]
Using the Cauchy-Schwartz inequality 
\[
   \| 2\sqrt{\mu} ( X - x^{\star} ) + \dot{X} \|^{2} \leq 2 \left( 4 \mu \| X - x^{\star} \|^{2} + \| \dot{X} \|^{2} \right),
\]
and the basic inequality for  $f \in \mathcal{S}_{\mu, L}^{1}(\mathbb{R}^n)$ 
\[
f(x^{\star}) \geq f(X) + \left\langle \nabla f(X), x^{\star} - x \right\rangle + \frac{\mu}{2} \left\| X - x^{\star} \right\|^{2},
\]
we calculate the time derivative 
\begin{align*}
\frac{d \mathcal{E}}{dt}     & = \frac{1}{2}\left\langle \dot{X}, - 2 \sqrt{\mu} \dot{X} - \nabla f(X) \right\rangle + \frac{1}{2} \left\langle 2\sqrt{\mu} \left( X - x^{\star} \right) + \dot{X}, - \nabla f(X) \right\rangle + \left\langle \nabla f(X), \dot{X} \right\rangle \\
                                         & = - \sqrt{\mu} \left( \| \dot{X} \|^{2} + \left\langle \nabla f(X), X - x^{\star} \right\rangle \right) \\
                                         & \leq - \sqrt{\mu} \left( \| \dot{X} \|^{2} + f(X) - f(x^{\star}) + \frac{\mu}{2} \left\| X - x^{\star} \right\|^{2} \right) \\                                    
                                         & \leq - \frac{\sqrt{\mu}}{4} \mathcal{E}.
\end{align*}
Hence, the proof is complete.
\end{proof}
 
 We now analyze the standard Euler discretization of the low-resolution ODE~\eqref{eqn: low_hb_NAG-SC}. All of the following  three Euler schemes take the same initial $x_{0}$ and $v_{0} = 0$.

\textbf{Euler Scheme of ODE~\eqref{eqn: low_hb_NAG-SC}: (S), (E) and (I) respectively}
\begin{align*}
&\textbf{(S)} && 
 \left\{ \begin{aligned}
          & x_{k + 1} - x_{k} = \sqrt{s} v_{k}                                                          \\
          & v_{k + 1} - v_{k} = - 2 \sqrt{\mu s} v_{k + 1} - \sqrt{s} \nabla f(x_{k + 1}).  
         \end{aligned} \right. \\
&\textbf{(E)} &&
\left\{ \begin{aligned}
          & x_{k + 1} - x_{k} = \sqrt{s} v_{k}                                                          \\
          & v_{k + 1} - v_{k} = - 2 \sqrt{\mu s} v_{k} - \sqrt{s} \nabla f(x_{k}).  
         \end{aligned} \right. \\
&\textbf{(I)} &&
\left\{ \begin{aligned}
          & x_{k + 1} - x_{k} = \sqrt{s} v_{k + 1}                                                          \\
          & v_{k + 1} - v_{k} = - 2 \sqrt{\mu s} v_{k + 1} - \sqrt{s} \nabla f(x_{k + 1}).  
         \end{aligned} \right. 
\end{align*}

\begin{thm}[Discretization of Low-Resolution ODE --- General]\label{thm: low_nag-sc-three}
For any $f \in \mathcal{S}_{\mu, L}^{1}(\mathbb{R}^n)$, the following conclusions hold:
\begin{enumerate}
\item[(a)] Taking $0 < s \leq \mu/(16L^2)$, the symplectic Euler scheme satisfies
\begin{align}
\label{eqn: conver_sc_low_sym}
f(x_k) - f(x^\star) \leq \frac{3L \left\| x_0 - x^\star \right\|^{2} }{2\left(1 + \frac{\sqrt{\mu s}}{4} \right)^{k}}.                                       
 \end{align}

\item[(b)] Taking$0 < s \leq \mu/(25L^2)$, the explicit Euler scheme satisfies
\begin{align}
\label{eqn: conver_sc_low_ex}
f(x_k) - f(x^\star) \leq \frac{3L \left\| x_0 - x^\star \right\|^{2} }{2}   \left(1 - \frac{\sqrt{\mu s}}{8} \right)^{k}.                                    
 \end{align}

\item[(c)] Taking $0 < s \leq 1/L$, the implicit Euler scheme satisfies
\begin{align}
\label{eqn: conver_sc_low_im}
f(x_k) - f(x^\star) \leq \frac{3L \left\| x_0 - x^\star \right\|^{2} }{2\left(1 + \frac{\sqrt{\mu s}}{4} \right)^{k} }.                                     
 \end{align}
\end{enumerate}
  
\end{thm}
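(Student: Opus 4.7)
The plan is to mirror, scheme by scheme, the proof of Theorem~\ref{thm:heavy-ball-rate-general-s}, since the low-resolution ODE~\eqref{eqn: low_hb_NAG-SC_phase} is exactly the heavy-ball high-resolution ODE~\eqref{eqn: low_hb_phase} with the factor $(1+\sqrt{\mu s})$ on the gradient removed. The discrete Lyapunov functions should therefore be chosen as direct analogues of those already used for the heavy-ball case, but without the $(1+\sqrt{\mu s})$ prefactor in front of $f(x_k)-f(x^\star)$. Concretely, for the symplectic and explicit schemes I will use
\[
\mathcal{E}(k) = \tfrac{1}{4}\|v_k\|^2 + \tfrac{1}{4}\|2\sqrt{\mu}(x_k-x^\star)+v_k\|^2 + \bigl(f(x_k)-f(x^\star)\bigr),
\]
and for the implicit scheme I will use the same expression but with $x_k$ replaced by $x_{k+1}$ inside the second squared norm (so that the update $x_{k+1}-x_k = \sqrt{s}\,v_{k+1}$ plays nicely with it). The initial bound $\mathcal{E}(0)\leq \tfrac{3L}{2}\|x_0-x^\star\|^2$ follows from $v_0=0$, the Cauchy--Schwarz inequality, and $L$-smoothness: $\|2\sqrt{\mu}(x_0-x^\star)\|^2 \leq 4\mu\|x_0-x^\star\|^2 \leq 4L\|x_0-x^\star\|^2$ together with $f(x_0)-f(x^\star)\leq \tfrac{L}{2}\|x_0-x^\star\|^2$.

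For part (a), I would expand $\mathcal{E}(k+1)-\mathcal{E}(k)$ exactly as in the proof of Theorem~\ref{thm:heavy-ball-rate-general-s}(a), substituting the symplectic update. Because the gradient term in the update is $\sqrt{s}\,\nabla f(x_{k+1})$ rather than $\sqrt{s}(1+\sqrt{\mu s})\nabla f(x_{k+1})$, the algebra is strictly simpler: the cross term between $v_{k+1}-v_k$ and $2\sqrt{\mu}(x_{k+1}-x^\star)+v_{k+1}$ combines with the convexity inequality
\[
f(x_{k+1})-f(x^\star)\leq \langle\nabla f(x_{k+1}),x_{k+1}-x^\star\rangle - \tfrac{\mu}{2}\|x_{k+1}-x^\star\|^2
\]
to give $\mathcal{E}(k+1)-\mathcal{E}(k)\leq -\sqrt{\mu s}\bigl[\|v_{k+1}\|^2 + (f(x_{k+1})-f(x^\star)) + \tfrac{\mu}{2}\|x_{k+1}-x^\star\|^2\bigr]$, which by the Cauchy--Schwarz bound on $\|2\sqrt{\mu}(x_{k+1}-x^\star)+v_{k+1}\|^2$ is at most $-\tfrac{\sqrt{\mu s}}{4}\mathcal{E}(k+1)$. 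The step size constraint $s\leq \mu/(16L^2)$ is not actually needed for the decay here (since the gradient-difference term has disappeared), but it is kept for consistency with initial value scaling; I will verify this point carefully.

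For part (b), the explicit scheme, the main difficulty is that the update uses $v_k$ and $\nabla f(x_k)$ on the right-hand side, so after expanding $\mathcal{E}(k+1)-\mathcal{E}(k)$ the natural convexity inequality is the smoothness one $f(x_{k+1})-f(x_k)\leq \langle\nabla f(x_k),x_{k+1}-x_k\rangle + \tfrac{L}{2}\|x_{k+1}-x_k\|^2$. This produces extra positive terms $O(sL)\|v_k\|^2$ and $O(s)\|\nabla f(x_k)\|^2$, which must be absorbed into the negative terms $-\sqrt{\mu s}\|v_k\|^2$ and $-\sqrt{\mu s}\langle\nabla f(x_k),x_k-x^\star\rangle$ using $\|\nabla f(x_k)\|^2\leq L\langle\nabla f(x_k),x_k-x^\star\rangle$. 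Choosing $s\leq \mu/(25L^2)$ makes $sL\ll \sqrt{\mu s}$ and balances these terms, giving $\mathcal{E}(k+1)-\mathcal{E}(k)\leq -\tfrac{\sqrt{\mu s}}{8}\mathcal{E}(k)$. This is the main computational obstacle—bookkeeping the constants so that $\mu/(25L^2)$ suffices rather than a smaller value.

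For part (c), the implicit scheme, the update gives $x_{k+1}-x_k=\sqrt{s}\,v_{k+1}$ and $v_{k+1}-v_k = -2\sqrt{\mu s}\,v_{k+1} - \sqrt{s}\,\nabla f(x_{k+1})$, so using the shifted Lyapunov (second term involves $x_{k+1}$), the expansion produces only terms evaluated at $k+1$. The convexity inequality $f(x^\star)\geq f(x_{k+1}) + \langle\nabla f(x_{k+1}),x^\star-x_{k+1}\rangle + \tfrac{\mu}{2}\|x_{k+1}-x^\star\|^2$ and the monotonicity inequality $\langle\nabla f(x_{k+1})-\nabla f(x_k),x_{k+1}-x_k\rangle\geq 0$ yield $\mathcal{E}(k+1)-\mathcal{E}(k)\leq -\tfrac{\sqrt{\mu s}}{4}\mathcal{E}(k+1)$, valid for any $s\leq 1/L$. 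This part should go through most cleanly, analogously to Theorem~\ref{thm:heavy-ball-rate-general-s}(c). I expect the explicit scheme (b) to be the only place where care with constants is needed; the symplectic and implicit analyses will be essentially identical in structure to their heavy-ball counterparts, with simpler algebra due to the absent $(1+\sqrt{\mu s})$ factor.
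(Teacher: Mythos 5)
Your proposal matches the paper's proof essentially step by step: the paper uses exactly the Lyapunov functions you describe — $\mathcal{E}(k) = \tfrac14\|v_k\|^2 + \tfrac14\|2\sqrt{\mu}(x_k-x^\star)+v_k\|^2 + f(x_k)-f(x^\star)$ for (a) and (b), and the shifted version with $x_{k+1}$ in the squared norm for (c) — together with the same Cauchy--Schwarz bound and the same strong-convexity inequalities, arriving at $-\tfrac{\sqrt{\mu s}}{4}\mathcal{E}(k+1)$ in (a) and (c) and $-\tfrac{\sqrt{\mu s}}{8}\mathcal{E}(k)$ in (b). The one point you flag for verification (whether $s\leq \mu/(16L^2)$ is actually used in the decrease estimate for the symplectic case) is worth checking carefully, since the paper's displayed chain for (a) does not visibly invoke it; otherwise the approach is identical to the paper's.
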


\begin{proof}
\begin{enumerate}
\item [(a)]
The Lyapunov function is 
\[
\mathcal{E}(k) = \frac{1}{4} \left\| v_k \right\|^{2} + \frac{1}{4} \left\| 2 \sqrt{\mu} (x_k - x^\star) + v_k \right\|^{2} + f(x_k) - f(x^\star).
\]
With the Cauchy-Schwartz inequality
\[
\left\| 2 \sqrt{\mu} (x_k - x^\star) + v_k \right\|^{2} \leq 2 \left( 4 \mu \left\| x_k - x^\star \right\|^{2} + \left\| v_k \right\|^{2} \right),
\]
and the basic inequality for $f \in \mathcal{S}_{\mu, L}^{1}(\mathbb{R}^n)$
\[
\left\{\begin{aligned} 
         & f(x_{k + 1}) - f(x_{k}) \leq \left\langle \nabla f(x_{k + 1}), x_{k + 1} - x_{k}  \right\rangle   \\
         & f(x^\star) \geq f(x_{k + 1}) + \left\langle \nabla f(x_{k + 1}), x^\star - x_{k + 1} \right\rangle + \frac{\mu}{2} \left\| x_{k + 1} - x^\star \right\|^{2},
        \end{aligned} \right.
\]
we calculate the iterave difference
\begin{align*}
&\lefteqn{\mathcal{E}(k + 1) - \mathcal{E}(k)}   \\ & =            \frac{1}{4} \left\langle v_{k + 1} - v_{k}, v_{k + 1} + v_{k} \right\rangle + f(x_{k + 1}) - f(x_{k}) \\
                                                        & \mathrel{ \phantom{=} }   + \frac{1}{4} \left\langle 2 \sqrt{\mu} (x_{k + 1} - x_{k}) + v_{k + 1} - v_{k}, 2 \sqrt{\mu} (x_{k + 1} + x_{k} - 2x^\star) + v_{k + 1} + v_{k} \right\rangle \\
                                                        & \leq        \frac{1}{2} \left\langle v_{k + 1} - v_{k}, v_{k + 1} \right\rangle + \left\langle \nabla f(x_{k + 1}), x_{k + 1} - x_{k} \right\rangle \\
                                                        & \mathrel{ \phantom{\leq} }   + \frac{1}{2} \left\langle 2 \sqrt{\mu} (x_{k + 1} - x_{k}) + v_{k + 1} - v_{k}, 2 \sqrt{\mu} (x_{k + 1} - x^\star) + v_{k + 1} \right\rangle \\
                                                        &\mathrel{ \phantom{\leq} }    - \frac{1}{4} \left\| v_{k + 1} - v_{k} \right\|^{2} - \frac{1}{4} \left\| 2 \sqrt{\mu} (x_{k + 1} - x_{k}) + v_{k + 1} - v_{k} \right\|^{2} \\
                                                        & \leq      - \sqrt{\mu s} \left( \left\| v_{k + 1} \right\|^{2} + \left\langle \nabla f(x_{k + 1}), x_{k + 1} - x^\star \right\rangle \right) \\
                                                        & \leq      - \sqrt{\mu s} \left( \left\| v_{k + 1} \right\|^{2} + f(x_{k + 1}) - f(x^\star) + \frac{\mu}{2}\left\| x_{k + 1} - x^\star \right\|^{2}  \right) \\  
                                                        & \leq      - \frac{\sqrt{\mu s}}{4} \mathcal{E}(k + 1).                                                    
\end{align*}
Hence, the proof is complete.


\item [(b)] The Lyapunov function is 
\[
\mathcal{E}(k) = \frac{1}{4} \left\| v_k \right\|^{2} + \frac{1}{4} \left\| 2 \sqrt{\mu} (x_k - x^\star) + v_k \right\|^{2} + f(x_k) - f(x^\star).
\]
With the Cauchy-Schwartz inequality
\[
\left\| 2 \sqrt{\mu} (x_k - x^\star) + v_k \right\|^{2} \leq 2 \left( 4 \mu \left\| x_k - x^\star \right\|^{2} + \left\| v_k \right\|^{2} \right),
\]
and the basic inequality for $f \in \mathcal{S}_{\mu, L}^{1}(\mathbb{R}^n)$
\[
\left\{\begin{aligned} 
         & f(x_{k + 1}) - f(x_{k}) \leq \left\langle \nabla f(x_{k}), x_{k + 1} - x_{k}  \right\rangle + \frac{L}{2} \left\| x_{k + 1} - x_{k} \right\|^{2}  \\
         & f(x^\star) \geq f(x_{k + 1}) + \left\langle \nabla f(x_{k + 1}), x^\star - x_{k + 1} \right\rangle + \frac{\mu}{2} \left\| x^\star - x_{k + 1} \right\|^{2},
        \end{aligned} \right.
\]
we calculate the iterave difference
\begin{align*}
&\lefteqn{\mathcal{E}(k + 1) - \mathcal{E}(k)} \\    & =            \frac{1}{4} \left\langle v_{k + 1} - v_{k}, v_{k + 1} + v_{k} \right\rangle + f(x_{k + 1}) - f(x_{k}) \\
                                                        & \mathrel{ \phantom{=} }   + \frac{1}{4} \left\langle 2 \sqrt{\mu} (x_{k + 1} - x_{k}) + v_{k + 1} - v_{k}, 2 \sqrt{\mu} (x_{k + 1} + x_{k} - 2x^\star) + v_{k + 1} + v_{k} \right\rangle \\
                                                        & \leq        \frac{1}{2} \left\langle v_{k + 1} - v_{k}, v_{k} \right\rangle + \left\langle \nabla f(x_{k}), x_{k + 1} - x_{k} \right\rangle + \frac{L}{2} \left\| x_{k + 1} - x_{k} \right\|^{2}  \\
                                                        & \mathrel{ \phantom{leq} }   + \frac{1}{2} \left\langle 2 \sqrt{\mu} (x_{k + 1} - x_{k}) + v_{k + 1} - v_{k}, 2 \sqrt{\mu} (x_{k} - x^\star) + v_{k} \right\rangle \\
                                                        & \mathrel{ \phantom{\leq} }   + \frac{1}{4} \left\| v_{k + 1} - v_{k} \right\|^{2} + \frac{1}{4} \left\| 2 \sqrt{\mu} (x_{k + 1} - x_{k}) + v_{k + 1} - v_{k} \right\|^{2} \\
                                                        & \leq      - \sqrt{\mu s} \left( \left\| v_{k} \right\|^{2} + \left\langle \nabla f(x_{k}), x_{k} - x^\star \right\rangle \right) \\
                                                        &\mathrel{ \phantom{\leq} }    + \frac{Ls}{2} \left\| v_{k} \right\|^{2} + \frac{s}{4} \left\| 2\sqrt{\mu} v_{k} + \nabla f(x_{k}) \right\|^{2} + \frac{s}{4} \left\| \nabla f(x_{k}) \right\|^{2} \\
                                                        & \leq      - \frac{\sqrt{\mu s}}{2} \left( \left\| v_{k } \right\|^{2} + f(x_{k }) - f(x^\star) + \frac{\mu}{2}\left\| x_{k} - x^\star \right\|^{2}  \right) \\  
                                                        & \mathrel{ \phantom{\leq} }   - \frac{\sqrt{\mu s}}{2} \left( \left\| v_{k} \right\|^{2} +  \frac{1}{L}\left\| \nabla f(x_{k}) \right\|^2 \right) + s \left( 2 \mu + \frac{L}{2} \right) \left\| v_k \right\|^{2} + \frac{3s}{4} \left\| \nabla f(x_{k}) \right\|^{2}. 
\end{align*}
Since $\mu \leq L$, the step size $s \leq \mu/ (25L^{2})$ satisfies it. Hence, the proof is complete after some basic calculations.

\item [(c)]

The Lyapunov function is 
\[
\mathcal{E}(k) = \frac{1}{4} \left\| v_k \right\|^{2} + \frac{1}{4} \left\| 2 \sqrt{\mu} (x_{k + 1} - x^\star) + v_k \right\|^{2} + f(x_k) - f(x^\star).
\]
With the Cauchy-Schwartz inequality
\begin{align*}
\left\| 2 \sqrt{\mu} (x_{k + 1} - x^\star) + v_k \right\|^{2}   &   = \left\| 2 \sqrt{\mu} (x_k - x^\star) + (1 + 2 \sqrt{\mu s}) v_k \right\|^2 \\
                                                                                         &  \leq 2 \left( 4 \mu \left\| x_k - x^\star \right\|^{2} +  (1 + 2 \sqrt{\mu s})^2 \left\| v_k \right\|^{2} \right),
\end{align*}
and the basic inequality for $f \in \mathcal{S}_{\mu, L}^{1}(\mathbb{R}^n)$
\[
\left\{\begin{aligned} 
         & f(x_{k + 1}) - f(x_{k}) \leq \left\langle \nabla f(x_{k + 1}), x_{k + 1} - x_{k}  \right\rangle  - \frac{1}{2L} \left\| \nabla f(x_{k + 1}) - \nabla f(x_{k}) \right\|^{2} \\
         & f(x^\star) \geq f(x_{k + 1}) + \left\langle \nabla f(x_{k + 1}), x^\star - x_{k + 1} \right\rangle + \frac{\mu}{2} \left\| x^\star - x_{k + 1} \right\|^{2},
        \end{aligned} \right.
\]
we calculate the iterave difference
\begin{align*}
& \lefteqn{\mathcal{E}(k + 1) - \mathcal{E}(k)} \\
    & =            \frac{1}{4} \left\langle v_{k + 1} - v_{k}, v_{k + 1} + v_{k} \right\rangle + f(x_{k + 1}) - f(x_{k}) \\
                                                        & \mathrel{ \phantom{=} }   + \frac{1}{4} \left\langle 2 \sqrt{\mu} (x_{k + 2} - x_{k + 1}) + v_{k + 1} - v_{k}, 2 \sqrt{\mu} (x_{k + 2} + x_{k + 1} - 2x^\star) + v_{k + 1} + v_{k} \right\rangle \\
                                                        & \leq        \frac{1}{2} \left\langle v_{k + 1} - v_{k}, v_{k + 1} \right\rangle  - \frac{1}{4} \left\| v_{k + 1} - v_{k} \right\|^{2}  \\
                                                        & \mathrel{ \phantom{\leq} }      + \left\langle \nabla f(x_{k + 1}), x_{k + 1} - x_{k} \right\rangle  - \frac{1}{2L} \left\| \nabla f(x_{k + 1}) - \nabla f(x_{k}) \right\|^{2}  \\
                                                        & \mathrel{ \phantom{\leq} }   + \frac{1}{2} \left\langle - \sqrt{s} \nabla f(x_{k + 1}), 2 \sqrt{\mu} (x_{k + 2} - x^\star) + v_{k + 1} \right\rangle - \frac{1}{4} \left\| \sqrt{s} \nabla f(x_{k + 1}) \right\|^{2} \\
                                                        & \leq      - \sqrt{\mu s} \left( \left\| v_{k + 1} \right\|^{2} + \left\langle \nabla f(x_{k + 1}), x_{k + 1} - x^\star \right\rangle \right) \\
                                                        &\mathrel{ \phantom{\leq} }   - \frac{\sqrt{s}}{2} \left\langle \nabla f(x_{k + 1}), (1 + 2\sqrt{\mu s})v_{k + 1} - v_{k} \right\rangle  \\
                                                        &\mathrel{ \phantom{\leq} }    -  \frac{1}{2L} \left\| \nabla f(x_{k + 1}) - \nabla f(x_{k}) \right\|^{2}  - \frac{1}{4} \left\| v_{k + 1} - v_{k} + \sqrt{s} \nabla f(x_{k + 1}) \right\|^2    \\
                                                        & \leq      -  \sqrt{\mu s} \left[ \left\| v_{k + 1} \right\|^{2} + \frac{1}{4}\left( f(x_{k + 1}) - f(x^\star) \right) + \frac{\mu}{2} \left\| x_{k + 1} - x^\star \right\|^{2}  \right] \\
                                                        & \mathrel{ \phantom{\leq} }    - \frac{1}{4} \left[ 3\sqrt{\mu s} \left( f(x_{k + 1}) - f(x^\star) \right) - 2 s \left\| \nabla f(x_{k + 1}) \right\|^{2} \right].                                          
\end{align*}
Since $\mu \leq L$, the step size $s \leq \mu/ (16L^{2})$ satisfies it. Hence, the proof is complete after some basic calculations.

\end{enumerate}
\end{proof}

\begin{coro}[Discretization of NAG-\texttt{SC} low-resolution ODE]\label{coro:fixed-s-nag-low-three}
For any $f \in \mathcal{S}_{\mu, L}^{1}(\mathbb{R}^n)$, the following conclusions hold:
\begin{enumerate}
\item[(a)]
Taking step size $0 s = \mu/(16L^2)$ , the symplectic Euler scheme satisfies
 \begin{align}
\label{eqn: conver_sc_low_sym_fix}
f(x_k) - f(x^\star) \leq \frac{3L \left\| x_0 - x^\star \right\|^{2} }{2\left(1 + \frac{\mu}{16L} \right)^{k}}.                                       
 \end{align}                  

\item[(b)] Taking step size $s = \mu / (16L^2)$, the explicit Euler scheme satisfies
 \begin{align}
\label{eqn: conver_sc_low_ex_fix}
f(x_k) - f(x^\star) \leq \frac{3L \left\| x_0 - x^\star \right\|^{2} }{2}   \left(1 - \frac{\mu }{40L} \right)^{k}.                                    
 \end{align}

\item[(c)] Taking step size $s = 1/L$, the implicit Euler scheme satisfies
 \begin{align}
\label{eqn: conver_sc_low_im_fix}
f(x_k) - f(x^\star) \leq \frac{3L \left\| x_0 - x^\star \right\|^{2} }{2\left(1 + \frac{1}{4} \sqrt{\frac{\mu}{L}} \right)^{k} }.                                      
 \end{align}   
\end{enumerate}
\end{coro}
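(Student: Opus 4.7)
The plan is to derive Corollary~\ref{coro:fixed-s-nag-low-three} as a direct specialization of Theorem~\ref{thm: low_nag-sc-three} by substituting three specific step sizes into its three general bounds. Because the Lyapunov analysis---construction of $\tfrac14\|v_k\|^2 + \tfrac14\|2\sqrt{\mu}(x_k - x^\star) + v_k\|^2 + (f(x_k) - f(x^\star))$ (with its implicit-scheme variant in which $x_k$ is replaced by $x_{k+1}$ inside the second norm) and the per-step-decrease computations---has already been performed in Theorem~\ref{thm: low_nag-sc-three}, all that remains is to check admissibility of each chosen $s$ and to evaluate the explicit factor $\sqrt{\mu s}/c$ that appears inside the contraction.

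For part (a), I would plug $s=\mu/(16L^2)$ into~\eqref{eqn: conver_sc_low_sym}. This choice saturates the hypothesis $s \leq \mu/(16L^2)$, so the bound applies, and the elementary computation $\sqrt{\mu s} = \mu/(4L)$ gives $\sqrt{\mu s}/4 = \mu/(16L)$, which is exactly the rate~\eqref{eqn: conver_sc_low_sym_fix}. Part (c) is analogous: the substitution $s = 1/L$ into~\eqref{eqn: conver_sc_low_im} saturates $s \leq 1/L$, and $\sqrt{\mu s} = \sqrt{\mu/L}$ yields $\sqrt{\mu s}/4 = \tfrac14\sqrt{\mu/L}$, matching~\eqref{eqn: conver_sc_low_im_fix}.

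Part (b) follows the same pattern: I would insert the explicit-scheme step size into~\eqref{eqn: conver_sc_low_ex} and evaluate $\sqrt{\mu s}/8$ to recover the contraction factor $\mu/(40L)$ appearing in~\eqref{eqn: conver_sc_low_ex_fix}. Since the admissibility condition for the explicit scheme in Theorem~\ref{thm: low_nag-sc-three}(b) is $s \leq \mu/(25L^2)$, I would verify that the chosen step size lies in this window, in which case the bound~\eqref{eqn: conver_sc_low_ex_fix} follows immediately upon simplifying $(1 - \sqrt{\mu s}/8)^k$.

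Honestly, there is no real obstacle: every nontrivial estimate---the Lyapunov telescoping, the use of Cauchy--Schwarz to bound $\|2\sqrt{\mu}(x_k - x^\star) + v_k\|^2$, and the invocation of $\mu$-strong convexity and $L$-smoothness---has been absorbed into Theorem~\ref{thm: low_nag-sc-three}. The corollary therefore amounts to arithmetic bookkeeping: for each of the three schemes, confirm the chosen $s$ satisfies the admissibility condition, and compute the single contraction constant. The closest thing to care needed is making sure the specified step sizes genuinely produce the advertised rates under their respective admissibility windows.
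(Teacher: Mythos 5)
Your overall strategy is exactly right and matches the paper's intent: the corollary is a pure specialization of Theorem~\ref{thm: low_nag-sc-three}, obtained by fixing the step size and computing $\sqrt{\mu s}$. Parts (a) and (c) are handled cleanly: with $s=\mu/(16L^2)$ one gets $\sqrt{\mu s}=\mu/(4L)$ so $\sqrt{\mu s}/4=\mu/(16L)$, and with $s=1/L$ one gets $\sqrt{\mu s}=\sqrt{\mu/L}$ so $\sqrt{\mu s}/4=\tfrac14\sqrt{\mu/L}$; both step sizes sit at the boundary of their admissibility windows, so the substitution is valid.

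However, in part (b) you gloss over the fact that your ``verification step'' would actually \emph{fail} for the step size written in the corollary. The corollary states $s=\mu/(16L^2)$, but Theorem~\ref{thm: low_nag-sc-three}(b) requires $s\le\mu/(25L^2)$, and $\mu/(16L^2)>\mu/(25L^2)$, so that step size is not admissible. Moreover, if you plugged $s=\mu/(16L^2)$ into $\sqrt{\mu s}/8$ you would obtain $\mu/(32L)$, not the advertised $\mu/(40L)$. The rate $\mu/(40L)$ in \eqref{eqn: conver_sc_low_ex_fix} corresponds to $s=\mu/(25L^2)$, which \emph{does} saturate the admissibility condition. So the corollary as printed has a typo in the step size for the explicit scheme (it should read $s=\mu/(25L^2)$), and your write-up should either flag this inconsistency or explicitly use $s=\mu/(25L^2)$; saying you ``would verify that the chosen step size lies in this window'' without noticing that it does not is the one genuine gap in an otherwise correct argument.
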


\begin{rem}
\label{rem: low_sc}
Compared with Theorem~\ref{thm: low_nag-sc-three} (a) -- (c), just the Euler scheme of the low-resolution ODE~\eqref{eqn: low_hb_NAG-SC}, both the explicit scheme and the symplectic scheme can retain the convergence rate from the continuous version of Theorem~\ref{thm: low_con_strongly}, when the step size $s$ is of the order $O(\mu/L^{2})$. Although the explicit scheme is weaker than the symplectic scheme, it can preserve the rate to the same order as the symplectic scheme. However, if the step size satisfies $s = O(\mu/L^{2})$, the algorithm cannot provide acceleration. There is no limitation on the step size $s$ for the implicit Euler scheme, but in general it is not practical for non-quadratic objective functions.
\end{rem}

\subsection{Low-resolution ODE for convex functions}
\label{subsec: low_c}

In this subsection, we discuss the numerical discretization of~\eqref{eqn: low_NAG-C}. 
We rewrite it in a phase-space representation:
\begin{equation}
\label{eqn: low_NAG-C_phase}
\left\{ \begin{aligned}
          & \dot{X} = V                                             \\
          & \dot{V} = - \frac{3}{t} V - \nabla f(X),  
         \end{aligned} \right. ,
\end{equation}
with $X(0) = x_0$ and $V(0) = 0$.
\begin{thm}
\label{thm: low_con_general}         
Let $f \in \mathcal{F}_{L}^{1}(\mathbb{R}^n)$. The solution $X = X(t)$ to the low-resolution ODE~\eqref{eqn: low_NAG-C} satisfies
\begin{equation}
\label{eqn: conver_c_low1}
\left\{ \begin{aligned}
          & f(X) - f(x^\star) \leq \frac{2 \left\| x_0 - x^\star \right\|^2}{t^2} \\
          & \min_{0 \leq u \leq t} \left\| \nabla f(X(u)) \right\|^{2} \leq \frac{4L \left\| x_0 - x^\star \right\|^{2}}{t^2}.
          \end{aligned}\right. 
 \end{equation}                  
\end{thm}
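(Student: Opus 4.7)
The plan is to use the Lyapunov function introduced by \cite{su2016differential},
\[
\mathcal{E}(t) = t^{2}\bigl(f(X) - f(x^\star)\bigr) + \tfrac{1}{2}\bigl\| 2(X - x^\star) + t\dot{X} \bigr\|^{2},
\]
which is tailored to the dissipation rate $3/t$ in the ODE~\eqref{eqn: low_NAG-C}. The first task is a direct differentiation: writing $\dot{\mathcal{E}}$ out, the cross term $\langle 2(X - x^\star) + t\dot{X},\, 3\dot{X} + t\ddot{X}\rangle$ simplifies using the ODE, since $3\dot{X} + t\ddot{X} = -t\nabla f(X)$. After cancellation, I expect
\[
\dot{\mathcal{E}}(t) = 2t\bigl(f(X) - f(x^\star)\bigr) - 2t\bigl\langle \nabla f(X),\, X - x^\star \bigr\rangle.
\]

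For the first bound on $f(X) - f(x^\star)$, I would apply the basic convexity inequality $f(x^\star) \ge f(X) + \langle \nabla f(X), x^\star - X\rangle$, yielding $\dot{\mathcal{E}} \le 0$. With the initial condition $\dot X(0) = 0$, we have $\mathcal{E}(0) = 2\|x_0 - x^\star\|^2$, and monotonicity of $\mathcal{E}$ together with $\mathcal{E}(t) \ge t^2(f(X) - f(x^\star))$ produces the desired $O(1/t^2)$ rate.

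For the squared-gradient bound, I would instead invoke the sharper inequality available for $f \in \mathcal{F}_L^1(\mathbb{R}^n)$,
\[
f(X) - f(x^\star) \le \bigl\langle \nabla f(X), X - x^\star\bigr\rangle - \tfrac{1}{2L}\|\nabla f(X)\|^{2},
\]
which refines the previous step to $\dot{\mathcal{E}}(t) \le -\tfrac{t}{L}\|\nabla f(X)\|^{2}$. Integrating from $0$ to $t$ and using $\mathcal{E}(t) \ge 0$ gives
\[
\int_{0}^{t} u\,\|\nabla f(X(u))\|^{2}\,du \;\le\; L\,\mathcal{E}(0) \;=\; 2L\|x_0 - x^\star\|^{2}.
\]
Lower-bounding the integrand by $\min_{0 \le u \le t}\|\nabla f(X(u))\|^{2}$ and evaluating $\int_0^t u\,du = t^2/2$ yields the claimed $4L\|x_0-x^\star\|^2/t^2$ bound.

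The only step that requires care is the algebraic simplification of $\dot{\mathcal{E}}$, where the $3/t$ coefficient in the ODE is matched exactly by the coefficients in the Lyapunov function so that the $\|\dot{X}\|^2$ and $\langle X - x^\star, \dot X\rangle$ terms cancel cleanly; this is precisely why the Su--Boyd--Cand\`es Lyapunov function is the right choice. Everything else is routine application of convexity and $L$-smoothness, paralleling the continuous-time arguments already used in Appendix~\ref{appendix: grad_flow} (e.g., Theorem~\ref{thm: grad_flow_convex2}).
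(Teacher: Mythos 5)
Your proof is correct and follows exactly the route the paper intends: it uses the same Su--Boyd--Cand\`es Lyapunov function \eqref{eqn: lyapunov_low_c_ode} stated in the paper, derives $\dot{\mathcal{E}} \le 0$ from plain convexity for the function-value bound, and then refines to $\dot{\mathcal{E}} \le -\tfrac{t}{L}\|\nabla f(X)\|^2$ via the $L$-smooth convexity inequality and integrates for the gradient-norm bound (the paper merely cites \cite{su2016differential} and \cite{shi2018understanding} for these two steps rather than spelling them out). All the algebra checks out, including the cancellation in $\dot{\mathcal{E}}$ and the evaluation $\mathcal{E}(0)=2\|x_0-x^\star\|^2$.
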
     
Theorem~\ref{thm: low_con_general} is combined with Theorem 3~\cite{su2016differential} and a further analysis about gradient norm minimization  in~\cite{shi2018understanding}. The Lyapunov function is constructed in~\cite{su2016differential} as 
\begin{equation}
\label{eqn: lyapunov_low_c_ode}
\mathcal{E} = t^{2} \left( f(X) - f(x^\star)\right) + \frac{1}{2} \| 2(X - x^\star) + t \dot{X}  \|^{2}. 
\end{equation}

%

\subsubsection{Symplectic Euler scheme}
First, we utilize the symplectic Euler scheme with the initial $x_{0}$ and $v_{0} = 0$, as shown as following:
\begin{equation}
\label{eqn: low_NAG-C_sym}
\left\{ \begin{aligned}
          & x_{k + 1} - x_{k} = \sqrt{s} v_{k}                                                                      \\
          & v_{k + 1} - v_{k} = -  \frac{3}{k + 1} v_{k + 1} - \sqrt{s} \nabla f(x_{k + 1}).  
         \end{aligned} \right. 
\end{equation}

\paragraph{Technical analysis of symplectic scheme~\eqref{eqn: low_NAG-C_sym}}

The Lyapunov function is 
\[
\mathcal{E}(k) = (k + 1)^2 s \left( f(x_{k}) - f(x^\star) \right) + \frac{1}{2} \left\| 2(x_{k + 1} - x^\star) + (k + 1) \sqrt{s} v_{k} \right\|^{2}.
\]
Then we can calculate the iterate difference as
\begin{align*}
      &  \lefteqn{\mathcal{E}(k + 1) - \mathcal{E}(k)} \\
& =     (k + 1)^2 s \left( f(x_{k + 1}) - f(x_{k}) \right)  + (2k + 3) s \left( f(x_{k + 1}) - f(x^\star) \right) \\
& \mathrel{ \phantom{=} }  + \frac{1}{2}\left\langle 2(x_{k + 1} - x_{k}) + (k + 2)\sqrt{s} v_{k + 1} - (k + 1)\sqrt{s} v_{k},\right. \\
& \mathrel{ \phantom{=} } \mathrel{ \phantom{+3\left\langle \right.} }   \left. 2(x_{k + 1} + x_{k} - 2x^\star) + (k + 2) \sqrt{s} v_{k + 1} + (k + 1)\sqrt{s} v_{k}    \right\rangle \\
& =     (k + 1)^{2} s \left( f(x_{k + 1}) - f(x_{k}) \right)  + (2k + 3) s \left( f(x_{k + 1}) - f(x^\star) \right) \\
&\mathrel{ \phantom{=} } - \left\langle (k + 1)s \nabla f(x_{k + 1}),  2(x_{k + 2}  - x^\star) + (k + 2) \sqrt{s} v_{k + 1}  \right\rangle \\
&\mathrel{ \phantom{=} } - \frac{1}{2} (k + 1)^2 s^{2} \left\| \nabla f(x_{k + 1}) \right\|^2. 
\end{align*}
We hope to utilize the basic inequality for $f \in \mathcal{F}_{L}^{1}(\mathbb{R}^n)$ to make the right-hand-side of the equality no more than zero. Based on the following inequalities:
\[
\left\{ \begin{aligned} 
         & f(x_{k + 1}) - f(x_{k})   \leq   \left\langle \nabla f(x_{k + 1}), x_{k + 1} - x_{k}    \right\rangle - \frac{1}{2L} \left\| \nabla f(x_{k + 1}) - \nabla f(x_{k}) \right\|^{2} \\
         & f(x_{k + 1}) - f(x^\star) \leq  \left\langle  \nabla f(x_{k + 1}), x_{k + 1} - x^\star \right\rangle - \frac{1}{2L} \left\| \nabla f(x_{k + 1}) \right\|^{2},
         \end{aligned} \right.
\]
we obtain the following estimate:
\[
\mathcal{E}(k + 1) - \mathcal{E}(k) \leq \frac{1}{2} (k + 1)^{2} s^{2} \left\| \nabla f(x_{k + 1}) \right\|^2 + s \left( f(x_{k + 1}) - f(x^\star) \right) - \frac{(k + 1)s}{L} \left\| \nabla f(x_{k + 1}) \right\|^{2},
\]
from which we cannot guarantee that the right-hand-side of the inequality is nonpositive.


\subsubsection{Explicit Euler scheme}

Now, we turn to the explicit Euler scheme with the initial $x_{0}$ and $v_{0} = 0$, as
\begin{equation}
\label{eqn: low_NAG-C_ex}
\left\{ \begin{aligned}
          & x_{k + 1} - x_{k} = \sqrt{s} v_{k}                                                          \\
          & v_{k + 1} - v_{k} = -  \frac{3}{k} v_{k} - \sqrt{s} \nabla f(x_{k}).  
         \end{aligned} \right. 
\end{equation}

\paragraph{Technical analysis of explicit scheme~\eqref{eqn: low_NAG-C_ex}}

Now, the Lyapunov function is 
\[
\mathcal{E}(k) = (k - 2)(k - 1) s \left( f(x_{k}) - f(x^\star) \right) + \frac{1}{2} \left\| 2(x_{k} - x^\star) + (k - 1) \sqrt{s} v_{k} \right\|^{2}.
\]
Then we can calculate the iterate difference as
\begin{align*}
       &\lefteqn{ \mathcal{E}(k + 1) - \mathcal{E}(k)} \\ 
& =       (k - 1)k s \left( f(x_{k + 1}) - f(x_{k}) \right)  + 2 (k - 1) s \left( f(x_{k}) - f(x^\star) \right) \\
&\mathrel{ \phantom{=} } + \frac{1}{2}\left\langle 2(x_{k + 1} - x_{k}) + k \sqrt{s} v_{k + 1} - (k - 1)\sqrt{s} v_{k},\right. \\
&\mathrel{ \phantom{=} } \mathrel{ \phantom{+2\left\langle \right.} } \left. 2(x_{k + 1} + x_{k} - 2x^\star) + k \sqrt{s} v_{k + 1} + (k - 1)\sqrt{s} v_{k}    \right\rangle \\
&\mathrel{ \phantom{=} }  (k - 1)k s \left( f(x_{k + 1}) - f(x_{k}) \right)  + 2 (k - 1) s \left( f(x_{k}) - f(x^\star) \right) \\
&\mathrel{ \phantom{=} }  + \left\langle - k s \nabla f(x_{k}),  2(x_{k}  - x^\star) + (k - 1) \sqrt{s} v_{k }  \right\rangle  + \frac{1}{2} k^2 s^2 \left\| \nabla f(x_{k}) \right\|^2. 
\end{align*}
\begin{itemize}
\item If we take the following basic inequality for $f \in \mathcal{F}_{L}^{1}(\mathbb{R}^n)$
\[
\left\{ \begin{aligned} 
         & f(x_{k + 1}) - f(x_{k})   \leq   \left\langle \nabla f(x_{k}), x_{k + 1} - x_{k}    \right\rangle + \frac{L}{2} \left\| x_{k + 1} - x_{k} \right\|^{2} \\
         & f(x_{k }) - f(x^\star) \leq  \left\langle  \nabla f(x_{k }), x_{k } - x^\star \right\rangle - \frac{1}{2L} \left\| \nabla f(x_{k }) \right\|^{2},
         \end{aligned} \right.
\]
we obtain the following estimate:
\begin{multline*}
\mathcal{E}(k + 1) - \mathcal{E}(k) \leq \frac{k(k - 1)Ls}{2} \left\| x_{k + 1} - x_{k} \right\|^{2} \\ - 2s \left( f(x_{k}) - f(x^\star) \right) - \frac{ks}{L} \left\| \nabla f(x_{k}) \right\|^{2}+ \frac{k^2 s^2}{2} \left\| \nabla f(x_{k}) \right\|^{2},
\end{multline*}
from which we cannot guarantee that the right-hand-side of the inequality is nonpositive.

\item If we take the following basic inequality for $f \in \mathcal{F}_{L}^{1}(\mathbb{R}^n)$
\[
\left\{ \begin{aligned} 
         & f(x_{k + 1}) - f(x_{k})   \leq   \left\langle \nabla f(x_{k + 1}), x_{k + 1} - x_{k}    \right\rangle - \frac{1}{2L} \left\| \nabla f(x_{k + 1}) - \nabla f(x_{k}) \right\|^{2} \\
         & f(x_{k }) - f(x^\star) \leq  \left\langle  \nabla f(x_{k }), x_{k } - x^\star \right\rangle - \frac{1}{2L} \left\| \nabla f(x_{k }) \right\|^{2},
         \end{aligned} \right.
\]
we obtain the following estimate:
\begin{multline*}
\mathcal{E}(k + 1) - \mathcal{E}(k) \\ \leq  (k - 1) k s \left( \left\langle \nabla f(x_{k + 1}) - \nabla f(x_{k}), x_{k + 1} - x_{k} \right\rangle - \frac{1}{2L} \left\| \nabla f(x_{k + 1}) - \nabla f(x_{k}) \right\|^{2} \right) 
                                                        \\ \mathrel{ \phantom{\leq} } -  2s \left( f(x_{k}) - f(x^\star) \right) - \frac{ks}{L} \left\| \nabla f(x_{k}) \right\|^{2}+ \frac{k^2 s^2}{2} \left\| \nabla f(x_{k}) \right\|^{2},
\end{multline*}
from which we still cannot guarantee that the right-hand-side of the inequality is nonpositive.
\end{itemize}

\subsubsection{Implicit scheme}

Finally, we analyze the implicit Euler scheme with the initial $x_{0}$ and $v_{0} = 0$:
\begin{equation}
\label{eqn: low_NAG-C_im}
\left\{ \begin{aligned}
          & x_{k + 1} - x_{k} = \sqrt{s} v_{k + 1}                                                          \\
          & v_{k + 1} - v_{k} = -  \frac{3}{k + 1} v_{k + 1} - \sqrt{s} \nabla f(x_{k + 1})  
         \end{aligned} \right. 
\end{equation}

\paragraph{Technical analysis of implicit scheme~\eqref{eqn: low_NAG-C_im}}

We construct the Lyapunov function as 
\[
\mathcal{E}(k) = (k + 1)(k + 2) s \left( f(x_{k}) - f(x^\star) \right) + \frac{1}{2} \left\| 2(x_{k} - x^\star) + (k + 1) \sqrt{s} v_{k} \right\|^{2}.
\]
Then we can calculate the iterate difference as
\begin{align*}
      & \lefteqn{ \mathcal{E}(k + 1) - \mathcal{E}(k)} \\ 
       &   =            (k + 1)(k + 2) s \left( f(x_{k + 1}) - f(x_{k}) \right)  + 2(k + 2) s \left( f(x_{k + 1}) - f(x^\star) \right) \\
         &\mathrel{ \phantom{=} }   +            \frac{1}{2}\left\langle 2(x_{k + 1} - x_{k}) + (k + 2)\sqrt{s} v_{k + 1} - (k + 1) \sqrt{s} v_{k},\right. \\
         &\mathrel{ \phantom{=} } \mathrel{ \phantom{+2\left\langle \right.} }\left. 2(x_{k + 1} + x_{k} - 2x^\star) + (k + 2) \sqrt{s} v_{k + 1} + (k + 1) \sqrt{s} v_{k}    \right\rangle \\
       &  =             (k + 1)(k + 2) s \left( f(x_{k + 1}) - f(x_{k}) \right)  + 2(k + 2) s \left( f(x_{k + 1}) - f(x^\star) \right) \\
         &\mathrel{ \phantom{=} }  -            \left\langle (k + 1) s \nabla f(x_{k + 1}),  2(x_{k + 1}  - x^\star) + (k + 2) \sqrt{s} v_{k + 1}  \right\rangle \\
         &\mathrel{ \phantom{=} }  -            \frac{1}{2} (k + 1)^2 s^2 \left\| \nabla f(x_{k + 1}) \right\|^2. 
\end{align*}
Now, we hope to utilize the basic inequality for $f \in \mathcal{F}_{L}^{1}(\mathbb{R}^n)$ to make the right side of equality no more than zero. Based on the following inequalities:
\[
\left\{ \begin{aligned} 
         & f(x_{k + 1}) - f(x_{k})   \leq   \left\langle \nabla f(x_{k + 1}), x_{k + 1} - x_{k}    \right\rangle \\
         & f(x_{k + 1}) - f(x^\star) \leq  \left\langle  \nabla f(x_{k + 1}), x_{k + 1} - x^\star \right\rangle - \frac{1}{2L} \left\| \nabla f(x_{k + 1}) \right\|^{2},
         \end{aligned} \right.
\]
we obtain:
\[
\mathcal{E}(k + 1) - \mathcal{E}(k) \leq 2s \left( f(x_{k + 1}) - f(x^\star) \right) - \frac{(k + 1)s}{L} \left\| \nabla f(x_{k + 1}) \right\|^{2} - \frac{1}{2} (k + 1) s^{2} \left\| \nabla f(x_{k + 1})\right\|^{2}.
\]
Although the negative term includes the multiplier $k$ and $k^2$, 
we cannot guarantee that the right-hand-side of the inequality is nonpositive.


Here, in contrast to the subtle discrete construction in~\cite{su2016differential}, we point out that the standard numerical discretization of low-resolution ODE~\eqref{eqn: low_NAG-C} cannot maintain the convergence rate from the continuous-time ODE, due the presence of numerical error.


\end{document}